\title{A Driven Tagged Particle in Asymmetric Exclusion Processes}
\author{Zhe Wang\thanks{
		\'{E}cole Polytechnique F\'{e}d\'{e}rale de Lausanne, D\'{e}partement de Math\'{e}matiques, 1015 Lausanne,
		Switzerland
}        
}
\date{Jul 31, 2019}
\newtheorem{theorem}{Theorem}[section]
\newtheorem{corollary}{Corollary}[section]
\newtheorem{lemma}{Lemma}[section]
\newtheorem{remark}{Remark}
\newtheorem{definition}{Definition}[section]
\numberwithin{equation}{section}
\newenvironment{proof}{{\sc Proof}:}{~\hfill $\square$}
\def\mathbi#1{\textbf{\em #1}}
\def\abs#1{\left\vert #1 \right\vert}
\begin{document}
	\newpage
	\maketitle
	\begin{abstract}
		We consider the asymmetric exclusion process with a driven tagged particle on $\mathbb{Z}$ which has different jump rates \textcolor{black}{from other particles} and show that the tagged particle can have a ballistic behavior when \textcolor{black}{the non-tagged particles have non-nearest-neighbor jump rates}. We show \textcolor{black}{the existence of }some non-trivial invariant measures for the environment process viewed from the tagged particle. \textcolor{black}{Our arguments are based on coupling, the martingale approach, and analyzing currents through fixed bonds.}
	\end{abstract}
	
	\section{Introduction}
	The \textcolor{black}{exclusion process} on the lattice $\mathbb{Z}^d$ with a driven tagged particle can be formally described as: a collection of red particles and a tagged green particle \textcolor{black}{performing continuous-time} random walks on the lattice $\mathbb{Z}^d$ \textcolor{black}{with respect to} the exclusion rule, \textcolor{black}{i.e. at most one particle is at each site and jumps are suppressed if the target site is already occupied}. Red particles have independent exponential clocks with rates $\lambda=\sum_z p(z)$. When a clock rings, the particle at site $x$ jumps to a vacant site $x+z$ with probability $\frac{p(\cdot)}{\lambda}$; the jump is suppressed if the site $x+z$ is occupied. The green tagged particle follows similar rules, but it has different jump rates $q(\cdot)$. We would like to study the long-time behavior of the displacement $D_t$ of the tagged particle.
	
	\textcolor{black}{
	The behavior of the tagged particle is mostly studied when $p(\cdot) = q(\cdot)$. Limit theorems for the displacement $D_t$ were obtained by works  \cite{Ar, Sa, Ki, KV,Va, SVY}. The environment process $\xi_t$ viewed from the tagged particle has a class of invariant and ergodic measures: Bernouli measures $\mu_\rho$ with parameters $\rho$ ($0\leq\rho\leq 1$). As a consequence, the speed of the tagged particle can be computed explicitly as $(1-\rho)\sum_{z}z\cdot q(z).$ For details, see Chapter III.4 \cite{Li}. The fluctuation of $D_t$ in equilibrium is known to be subdiffusive when $d=1$ and $p(\cdot)=q(\cdot)$ are nearest-neighbor symmetric \cite{Ar}, and diffusive in most other finite-range cases \cite{Ki,KV,Va,SVY}. A powerful method, by Kipnis and Varadhan \cite{KV}, is to study the additive functionals of reversible Markov processes. It is also extended to asymmetric models \cite{Va,SVY}. The only open cases are when non-mean-zero $p(\cdot) = q(\cdot)$ is non-nearest-neighbor in dimension $d= 1$, and when $p(\cdot) = q(\cdot)$ is non-mean-zero in dimension $d=2$.}
	
	\textcolor{black}{
	The case where $d=1$, $p(\cdot)=q(\cdot)$ are nearest-neighbor is special. Particles are trapped and orders are preserved. The gaps between particles follow a zero-range process\textcolor{black}{\cite{Ki}}. The displacement $D_t$ can be considered jointly with the current \textcolor{black}{through} the bond $0$ and $1$ \textcolor{black}{in either the zero-range process\cite{Ki}, or the exclusion process \cite{SV}}. On the other hand, for symmetric jump rates $p(\cdot)$, we can use the stirring system to construct \textcolor{black}{the symmetric exclusion process}, \textcolor{black}{see Chapter VIII.4 \cite{Li85}}. With these considerations, one can study the density fields and apply hydrodynamic limit results to analyze the displacement $D_t$. Some related works are \cite{Ar,JL,SV,FF,Go}.}

	\textcolor{black}{However, when jump rates $p(\cdot),q(\cdot)$ are different and non-nearest-neighbor, the asymptotic behavior of the displacement $D_t$ is less understood. A primary difficulty to provide rigorous proofs is the lack of explicit knowledge of invariant measures for the environment process, which is important in the analysis in \cite{Ar,KV,Va,SVY}. The difference between $p(\cdot)$ and $q(\cdot)$ introduces asymmetry which makes explicit computations of invariant measure seemingly impossible. In dimension $d\leq 2$, it is unclear whether there are multiple invariant measures for different values of density $\rho$, except for two trivial ones, Bernoulli measures $\mu_0$ and $\mu_1$. In $d\geq 3$, Loulakis' result \cite{Lo} provides a partial answer when $p(\cdot)$ is symmetric. Also, in dimension {$d=1$}, the orders of gaps between particles are no longer preserved due to the non-nearest-neighbor assumption on $p(\cdot)$. 
	}\textcolor{black}{Meanwhile, we should notice two special cases in general one-dimensional asymmetric models without a tagged particle: in asymmetric exclusion process, there are stationary blocking measures \cite{BM,FLS}; in the totally asymmetric simple K-exclusion process, invariant measures are also unknown \cite{Se}. In the former model, blocking measures are nontranslation invariant measures concentrated on configurations that are completely occupied by particles on one end, and completely empty to the other end. The existence of blocking measures in principle implies the displacement $D_t$ of a tagged particle may grow sub-linearly, because particles are often blocked by their neighbor particles. For the K-exclusion process, Sepp\"{a}l\"{a}inen \cite{Se} managed to show the hydrodynamic limit of the system with invariant measures unknown. His arguments are based on coupling the process with a growth model.} 
	 
	 \textcolor{black}{Alternatively, we can view the tagged particle in the exclusion process as a special particle driven by an external force, and consider our model as a perturbed system. One approach is to verify the Einstein relation, which connects mobility and diffusivity. The mobility describes the speed of the tagged particle in the perturbed system, while the diffusivity describes the variance of the tagged particle in the unperturbed system. For exclusion processes, the Einstein relation are verified in some symmetric and reversible scenarios \cite{KO,Lo,LOV}}. When dimension $d=1$, $p(\cdot)$ is symmetric and $p(\cdot), q(\cdot)$ are nearest-neighbor, Landim, Olla and Volchan \cite{LOV} \textcolor{black}{showed that the displacement $D_t$ grows as $\sqrt{t}$, and there is an Einstein relation for $D_t$, by studying the dynamics of gaps.} \textcolor{black}{They further} conjectured that $D_t$ grows linearly in $t$ when the mean $\sum_z z\cdot q(z)$ is positive and $p(\cdot)$ is non-nearest-neighbor in $d=1$ or general in $d\geq 2$. This conjecture is \textcolor{black}{partially verified when $d\geq 3$ and $p(\cdot)$ is symmetric \cite{Lo}}, and it remains open for most of the other cases. \textcolor{black}{When $q(\cdot)$ is close to $p(\cdot)$, we can show the displacement $D_t$ grows linearly in $t$ with a corresponding Einstein relation \cite{Lo}. However, the speed of the tagged particle is unknown because there is no explicit formula for the invariant measures. For mixing dynamical environment with spectral gap, Komorowski and Olla \cite{KO} were able to obtain full expansion of invariant measures, and showed the explicit speed and the corresponding Einstein relation.
	}
	
	 \textcolor{black}{Another approach is to study the currents through a fixed bond in the one-dimensional asymmetric exclusion process (AEP) with coupling arguments. The current describes the average number of particles across a site, and it is a natural object especially when $\sum_z z\cdot p(z)$ is non-zero. By couplings, Liggett \cite{Li75,Li77} computed the currents and limiting measure in AEP explicitly for a class of general initial measures. For a more general class of asymmetric conservative particle systems with a blockage, one can show a hydrodynamic limit result when a second type of coupling is available \cite{Ba}. In these systems, the current across the blockage is a key quantity in the hydrodynamic limit because it describes the densities near the blockage. 
	  Although this second type of couplings is different from that in Liggett's \cite{Li75,Li77}, it is available in the one-dimensional AEP case. When jump rates $p(\cdot)$ satisfy certain conditions, Ferrari, Lebowitz and Speer \cite{FLS} showed a coupling of two AEPs and applied this coupling to prove the existence of blocking measures. When a driven tagged particle is present, we can also consider the current across a special site, the (moving) tagged particle, and obtain estimates of currents by coupling different AEPs with a driven tagged particle. }
	 
	 In this article, we will consider the case where $d=1$ and $p(\cdot)$ is non-nearest-neighbor and asymmetric with positive mean $\sum z\cdot p(z) >0$. \textcolor{black}{The main tools are the couplings and martingale arguments. There are two types of couplings similar to those in \cite{FLS,Li75,Li77}. These two types of couplings allow us to compare currents in different processes and obtain estimates of currents. With martingale arguments, we can relate estimates of currents to estimates of the displacement $D_t$ and some invariant measures. In the end, we will show that the displacement $D_t$ grows linearly in $t$ in three scenarios (Theorems \ref{Thm:ballistic behavior for left moving tagged particle}, \ref{Thm:ballistic behavior for fast moving tagged particle}, \ref{Thm:positive speed for slow moving tagged particle}). These results suggest behaviors of the tagged particle depend on jump rates $p(\cdot), q(\cdot)$ and the initial measure in a nontrivial way. By characterizing some nontrivial invariant measure, we will show that the tagged particle can have a positive speed in AEP even when it has a negative drift, i.e.  $\sum_z z\cdot q(z)<0$ (Theorem \ref{Thm:positive speed for slow moving tagged particle}). Some mild assumptions will also be made in the next section.}

\section{Notation and Results\label{chap:Notation and Result}}

In this section, we \textcolor{black}{first introduce the problem, describe the environment process viewed from the tagged particle, next we describe the assumptions, introduce some notation and lastly we state the main results and provide an outline of the proofs.}

A configuration $\xi(\cdot)$ on $\mathbb{Z}\setminus\{0\}$ indicates which sites are occupied relative to the tagged particle:  $\xi(x) = 1$ if site $x$ is occupied, and $\xi(x) = 0$ otherwise. The collection of all configurations $\mathbb{X}=\{0,1\}^{\mathbb{Z}\setminus\{0\}}$ forms  a state space for the environment process $\xi_t$. 

Local functions are \textcolor{black}{functions defined on $\mathbb{X}$ which depend on finitely many $\xi(x)$}. We will use $\mathbi{C}$ to denote the space of local functions on ${\mathbb{Z}\setminus\{0\}}$ and $\mathbf{M}_1$ to denote the space of probability measures on $\mathbb{X}$. \textcolor{black}{Examples of local functions are}: 
\begin{align}
\xi_x(\xi) =& \xi(x),\\
\xi_A(\xi) =& \prod_{x\in A} \xi(x), \text{ A is a finite set of } \mathbb{Z}.
\end{align}

\textcolor{black}{The process $\xi_t$ starting from any initial configuration in $\mathbb{X}$ is a Markov process with generator $\mathit{L}=\mathit{L}^{ex}+\mathit{L}^{sh}$. The action of} $\mathit{L}$ on any local function $f$ is given by:
\begin{align}
\mathit{L}f(\xi) =& (\mathit{L}^{ex}+\mathit{L}^{sh}) f(\xi) \notag \\
=&\sum_{x,y\neq 0}p(y-x)\xi_x\left( 1-\xi_y\right)\left(f(\xi^{x,y})-f(\xi)\right) \notag \\
&+\sum_{z}q(z)\left( 1-\xi_z\right)\left(f(\theta_z\xi)-f(\xi)\right)  \label{Eq:generator}
\end{align}
where $\xi^{x,y}$ represents the configuration after exchanging particles at site $x$ and $y$ of $\xi$,	
\begin{equation} \label{usual exchange}
\xi^{x,y}(z) =
\begin{cases}
\xi(z) & \text{if }z \neq x,y \\
\xi(y) & \text{if }z = x \\
\xi(x) & \text{if }z = y 
\end{cases}
\end{equation}
and $\theta_z\xi$ represents the configuration shifted by $-z$ unit due to the jump of the tagged particle to an empty site at $z$,
\begin{equation} \label{usual translation}
(\theta_z\xi)(x) =
\begin{cases}
\xi(x+z) & \text{if }x \neq -z \\
\xi(z) & \text{if }x = -z 
\end{cases}.
\end{equation} 
\textcolor{black}{The generator $\mathit{L}^{ex}$ corresponds to the motion of red particles, while the generator $\mathit{L}^{sh}$ corresponds to the motion of the tagged particle.}

Denote the probability measure on the space of c\'{a}dl\'{a}g paths on $\mathbb{X}$ starting from a deterministic configuration $\xi_0=\eta$ by  $\mathbb{P}^{\eta,q}$
, and let $\mathbb{P}^{\nu_0,q}=\int\mathbb{P}^{\eta,q}\, d\nu_0(\eta)$ when the initial configuration $\xi_0$ is distributed according to some measure $\nu_0$ on $\mathbb{X}$. We also denote the expectation with respect to $\mathbb{P}^{\nu_0,q}$ 
by $\mathbb{E}^{\nu_0,q}$
. A special initial measure is the step measure $\mu_{1,0}$, which concentrates on the configuration $\xi$, with $\xi(x)=1,$ for $x<0$, and  $\xi(x) = 0$, for $x>0$. 

Lastly, we will use $D_t$ to denote the displacement of the green tagged particle up to time $t$. The main problem is to investigate the long time behavior of $D_t$ when $q(\cdot)$ is different from $p(\cdot)$.

To illustrate the result, we consider the case where red particles have positive drifts, while the tagged particle has jump rates $q(\cdot)$. \textcolor{black}{We want $p(\cdot)$ to} satisfy the following assumptions:

\begin{enumerate}[label=A\arabic{*}]
	\item (Positive) $p(2)= p(-2) > 0,$ and $p(1) > p(-1). $ \label{Asmp:positive} 
	
	\item (\textcolor{black}{Radially Decreasing} and Range 2) $ p(-1) \geq p(-2) $, $p(k)=0$ for all $\abs{k}>2$. \label{Asmp:attractiveness}		
\end{enumerate}
\textcolor{black}{These two assumptions imply $p(1)\geq p(2)$.} It turns out that we can have more general assumptions on the jump rates $p(\cdot)$ \textcolor{black}{and} get similar results:
\begin{enumerate}[label=A'\arabic{*}]
	\item (Positive) $p(k)\geq p(-k)$ for all $k>0$, and  $p(k) > p(-k)$ for some $k$, \label{Asmp:positive2} 
	
	\item (\textcolor{black}{Radially Decreasing}) $ p(x) $ is increasing on negative axis and decreasing on positive axis,  \label{Asmp:attractiveness2}
	
	\item (Finite-range) there is an $R>0$ such that $p(x) = 0$ for $|x|>R$.  \label{Asmp:finite-range2}		
\end{enumerate}
	
\textcolor{black}{Our main results are ballistic behaviors of a driven tagged particle in asymmetric exclusion processes (AEP)}. The first result is the most natural one. When \textcolor{black}{the initial measure is the step measure $\mu_{1,0}$ and the tagged particle has only pure left jump rates}, it has a ballistic behavior towards left, \textcolor{black}{i.e. $\frac{D_t}{t}$ has a strictly negative asymptotic upper bound. } 
\begin{theorem} \label{Thm:ballistic behavior for left moving tagged particle} (Ballistic Behavior of a Tagged Particle in AEP with Only Left Jumps)
	
	Consider the \textcolor{black}{AEP} with a driven tagged particle. Let the jump rates $p(\cdot)$ for the red particles satisfy \ref{Asmp:positive} and \ref{Asmp:attractiveness}, and the jump rates $q(\cdot)$ be supported on negative axis. 
	Then, starting from \textcolor{black}{the step initial measure $\mu_{1,0}$}, we have
	\[ \limsup_{t\to \infty} \frac{D_t}{t} \leq c <0 ,\  \textcolor{black}{\mathbb{P}^{\mu_{1,0},q}}\--a.s. \]
\end{theorem}

When the tagged particle can jump in both directions, we can also obtain ballistic behaviors. \textcolor{black}{If the tagged particle has a drift $\sum_z z\cdot q(z)$ greater than $\sum_z z\cdot p(z)$, its mean displacement is expected to have the same asymptotic lower bound. The second result confirms that under some conditions on $p(\cdot)$ and $q(\cdot)$, the displacement $D_t$ has an asymptotic lower bound $ (1-\rho) \sum_z z\cdot p(z)$. Recall ${(1-\rho) \sum_z z\cdot p(z)}$ is the speed of a tagged particle with $q(\cdot) = p(\cdot)$ in AEP starting from a Bernoulli product measure $\mu_\rho$.}
\begin{theorem} \label{Thm:ballistic behavior for fast moving tagged particle} (Ballistic Behavior of a Fast Tagged Particle in AEP)
	
	 	\textcolor{black}{Consider the {AEP} with a driven tagged particle.} Let the jump rates $p(\cdot)$ for the red particles be supported on site $-2,-1,1$, and jump rates $q(\cdot)$ for the tagged particle be supported on site $-1,1,2$. Assume $p(\cdot),q(\cdot)$ satisfy:
	\begin{enumerate}
		\item $p(-2)<p(-1)$, $q(2)<q(1)$
		\item $p(-1)> q(-1)$, $q(1) > p(1)$
		\item $\sum_z z\cdot p(z)$ >0	 	
	\end{enumerate}
	Then, starting from a Bernoulli product measure $\mu_\rho$ with $\rho \in (0,1)$, we have
	\[ \liminf_{t\to \infty} \frac{D_t}{t} \geq (1-\rho) \sum_z z\cdot p(z) ,\  \mathbb{P}^{\mu_\rho,q}\--a.s. \]
\end{theorem}
\textcolor{black}{The first two assumptions imply that we can couple two continuous-time random walks with jump rates $p(\cdot), q(\cdot)$ such that the walk with $q(\cdot)$ always stays on the right of the walk with $p(\cdot)$. The supports of jump rates $p(\cdot),q(c\dot)$ implies red particles do not jump to the right of the tagged particle, and the tagged particle does not jump to the left of red particles.}

\textcolor{black}{The final result is that a slow tagged particle in AEP can follow the general behavior of red particles even if it has jump rates $q(\cdot)$ with a negative mean ${\sum_z z\cdot q(z) <0}$. By slow, we mean that the size of $q(\cdot)$, $\sum_z q(z)$, is sufficiently small relative to $\sum_z z\cdot p(z)$.} 

\begin{theorem} \label{Thm:positive speed for slow moving tagged particle} (Ballistic Behavior of a Slow Tagged Particle in AEP)
	
	Consider the \textcolor{black}{AEP} with a driven tagged particle. Let the jump rates for the red particles satisfy \ref{Asmp:positive} and \ref{Asmp:attractiveness}. Then, there exist nearest-neighbor jump rates $q(\cdot)$ for the tagged particle and an ergodic invariant measure $\nu_e$ for the environment process viewed from the tagged particle such that, under $\mathbb{P}^{\nu_e,q}$, we have
	\begin{enumerate}[label=\alph*.]
		\item the tagged particle has a negative drift: $q(-1)>q(1)>0$,
		\item the tagged particle has a positive speed under $\mathbb{P}^{\nu_e,q}$, that is,
		\[ \lim_{t\to \infty} \frac{D_t}{t} = m >0,\  \mathbb{P}^{\nu_e,q}\--a.s. \]
	\end{enumerate} 
\end{theorem}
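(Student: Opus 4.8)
The plan is to reduce both conclusions to a single strict inequality between the particle densities immediately behind and immediately ahead of the tagged particle, and then to produce that inequality by a coupling argument. Write $\rho_+=\int \xi_1\,d\nu_e$ and $\rho_-=\int\xi_{-1}\,d\nu_e$ for the densities at sites $+1$ and $-1$ under a candidate invariant measure. Since $X_t$ has instantaneous drift $\sum_z z\,q(z)(1-\xi_t(z))$, the ergodic theorem for the stationary environment gives the speed $v=\sum_z z\,q(z)\bigl(1-\int\xi_z\,d\nu_e\bigr)=q(1)(1-\rho_+)-q(-1)(1-\rho_-)$ for nearest-neighbor $q$. Both assertions then hold as soon as we exhibit an ergodic invariant $\nu_e$ with $\rho_->\rho_+$ and choose the ratio $q(-1)/q(1)$ in the nonempty window $\bigl(1,(1-\rho_+)/(1-\rho_-)\bigr)$: the left endpoint forces the negative drift $q(-1)>q(1)$, while any value below the right endpoint forces $v>0$. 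Thus everything reduces to the strict density gap $\rho_->\rho_+$ for a suitable slow, nearest-neighbor $q$.

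To build $\nu_e$ I would start the environment process from the step measure $\mu_{1,0}$ and pass to the limit. Assumption \ref{Asmp:attractiveness} is exactly what makes the process attractive: adapting the basic coupling so that the discrepancies created when one tagged particle jumps and the other does not are transported as second-class particles, the condition $p(-1)\ge p(-2)$ together with the finite range $p(k)=0$ for $\abs{k}>2$ should guarantee that the coupling preserves the coordinatewise order. Monotonicity then yields a monotone limit of the time marginals of $\mathbb{P}^{\mu_{1,0},q}$; by Krylov--Bogolyubov this limit is invariant, and because the step configuration is a deterministic extremal starting point the limit is an extremal invariant measure, hence ergodic for the environment process. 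This furnishes the required $\nu_e$ for each fixed $q$.

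The heart of the argument is the strict inequality $\rho_->\rho_+$. The geometry near the origin is rigid: under \ref{Asmp:attractiveness} the only exchange moves that cross the tagged particle are the symmetric hops $-1\leftrightarrow 1$ at the common rate $p(2)=p(-2)$, so the exchange current across the origin equals $p(2)\bigl(\rho_--\rho_+\bigr)$ exactly, the pair correlations cancelling. A stationary current-balance identity then ties $\rho_--\rho_+$ to the net particle current past the defect. I would show this current is strictly positive by coupling the defect process with the defect-free exclusion process driven by $p(\cdot)$, whose product Bernoulli stationary states carry current $\rho(1-\rho)\bigl(p(1)-p(-1)\bigr)>0$ by \ref{Asmp:positive}; the positive bulk drift feeds a strictly positive flux through the bottleneck, forcing $\rho_->\rho_+$. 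The delicate point, and the main obstacle, is that the long-range hops are symmetric and therefore tend to equalize $\rho_-$ and $\rho_+$, so one must control the shift contributions to the current balance and show that the drift-induced accumulation behind the tagged particle strictly dominates.

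Finally I would close the self-consistency loop. Since $\rho_\pm=\rho_\pm^q$ depend on $q$, I would fix the ratio $q(-1)/q(1)=r>1$ and let the overall rate tend to $0$; as the tagged particle slows down the environment relaxes to the fixed-blockage stationary picture, and I would prove $\rho_\pm^q\to\rho_\pm^{0}$ with a strict gap $\rho_-^{0}>\rho_+^{0}$ inherited from the previous step. Choosing $r\in\bigl(1,(1-\rho_+^{0})/(1-\rho_-^{0})\bigr)$ and then $q$ small enough keeps $r$ inside the window $\bigl(1,(1-\rho_+^q)/(1-\rho_-^q)\bigr)$, which simultaneously yields the negative drift $q(-1)>q(1)$ and the positive speed $v>0$ under $\mathbb{P}^{\nu_e,q}$. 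The quantitative control of the density gap uniformly as $q\to 0$ is where the coupling estimates must be pushed hardest.
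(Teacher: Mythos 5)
Your reduction of both conclusions to a strict density gap $\rho_- > \rho_+$ next to the tagged particle, with $q(-1)/q(1)$ chosen in the window $\bigl(1,(1-\rho_+)/(1-\rho_-)\bigr)$, is exactly the paper's own reduction (Lemmas \ref{prop:Nt and Measure} and \ref{prop:displacement and spped of tagged}). But the two load-bearing steps of your proposal are gaps. The first is the positive flux through the blockage. The assertion that ``the positive bulk drift feeds a strictly positive flux through the bottleneck'' cannot be delivered by a simple coupling with the defect-free ASEP, and as stated it is false in general: for purely nearest-neighbor $p(\cdot)$ the flux past a frozen tagged particle is identically zero even though the bulk drift is positive. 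Positivity here depends on $p(2)=p(-2)>0$ and, crucially, on ruling out the jammed scenario in which the region to the right of the origin fills up to density $1$ --- that scenario also produces zero current through the bond $(-1,1)$, since $p(2)\langle \xi_{-1}(1-\xi_1)\rangle-p(-2)\langle \xi_1(1-\xi_{-1})\rangle=0$ when both neighborhoods are full. The paper's entire Sections \ref{sec: Current Zero} and \ref{sec: ASEP on the half line with destruction} exist to exclude this: one argues by contradiction, shows any Ces\`{a}ro limit $\nu^*$ of translates of an invariant measure is a mixture of Bernoulli measures (Lemma \ref{lm: Bernoulli measure}), and then kills the weight on $\mu_1$ by comparing with Liggett's half-line ASEP with creation (Theorem \ref{thm: right asymptotic density is 0} via Theorem \ref{thm:main convergence for Q process}). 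Your sketch has no substitute for this comparison, and you yourself flag this point as the ``main obstacle'' without resolving it.

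The second gap concerns monotonicity and ergodicity. The environment process viewed from the tagged particle is \emph{not} attractive in the coordinatewise order on $\{0,1\}^{\mathbb{Z}\setminus\{0\}}$: a jump of the tagged particle shifts the entire configuration, so the basic coupling with second-class particles does not preserve that order. This is why the paper builds its couplings on the labeled-particle space $\hat{\mathbb{X}}$ with the order $X_i\geq Y_i$ for all $i$ (Section \ref{sec:coupling} and Appendix \ref{sec: existence of coupling}). Moreover, the inference ``deterministic extremal starting point $\Rightarrow$ the limiting measure is extremal, hence ergodic'' is not valid; limits of time marginals from a deterministic start can be non-ergodic mixtures. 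The paper obtains ergodicity differently: the Ces\`{a}ro-constructed invariant measure $\bar{\nu}$ satisfies a strict \emph{linear} inequality $\langle\bar{\nu},f\rangle>0$, and since the invariant measures form a compact convex set, some extremal (hence ergodic) invariant measure must satisfy the same inequality. Finally, your closing ``self-consistency loop'' requires continuity of $\rho_\pm^q$ as $q\to 0$, which is yet another unproven and delicate claim; the paper sidesteps any such continuity by the quantitative error bound $\abs{\mathbb{E}^{\xi,q}[N_t]-\mathbb{E}^{\xi,0}[N_t]}\leq t\,(q(1)+q(-1))$ of Theorem \ref{thm:replacing estimate}, which transfers the frozen-tag current bound directly to small moving-tag rates.
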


In these results, we have shown \textcolor{black}{ballistic behaviors of the tagged particle in AEP. With arguments to be introduced in Section \ref{sec: Invariant Measure and Lower Bound}, these ballistic behaviors (with estimates) imply the existence of some non-trivial invariant measures, measures other than $\mu_0$ or $\mu_1$, for the environment process viewed from the tagged particle.} In the driven tagged particle problem, the invariant measure is in general impossible to compute due to the break of symmetry. The Bernoulli product measure is no longer invariant. In principle, there could be multiple invariant measures, which makes the behaviors of the tagged particle hard to predict. The following are some remarks on the choices \textcolor{black}{of more general} jump rates $p(\cdot)$ and $q(\cdot)$. 

\begin{remark} \label{rm: general jumprates}
	We can have more general $p(\cdot)$ and $q(\cdot)$. If $p(\cdot)$ satisfy assumptions 
	 \ref{Asmp:attractiveness2}, and \ref{Asmp:finite-range2}, we need:
	\begin{enumerate}
		\item in \textcolor{black}{Theorem \ref{Thm:ballistic behavior for left moving tagged particle}, $p(\cdot)$ are under additional assumption \ref{Asmp:positive2}}, and $q(\cdot)$ are \textcolor{black}{only} supported on negative axis;
		\item in \textcolor{black}{Theorem \ref{Thm:ballistic behavior for fast moving tagged particle}, $p(\cdot), q(\cdot)$ satisfy the second assumption, and red particles starting from the left of the tagged particle always remain to the left of the tagged particle, \textcolor{black}{ i.e. $p(-1)\geq q(-1)$, $q(1) \geq p(1)$, and $p(k)=0, q(-k)=0$ for all $k\geq 2$};}
		\item in \textcolor{black}{Theorem \ref{Thm:positive speed for slow moving tagged particle}, $p(\cdot)$ are under additional assumption \ref{Asmp:positive2}, and  $\sum_z q(z)$ is small relative to the mean ${\sum z \cdot p(z)}$ of $p(\cdot)$, ie. $\sum_z q(z) < c' \sum z\cdot p(z)$ for some small enough $c'>0$}.
	\end{enumerate} 
\end{remark}

%
%
%

To get these three results, we use similar ideas. The proofs of \textcolor{black}{Theorem \ref{Thm:ballistic behavior for left moving tagged particle} and Theorem \ref{Thm:positive speed for slow moving tagged particle} are similar and they are in Section \ref{sec:proof of main}. The proof of Theorem \ref{Thm:ballistic behavior for fast moving tagged particle} is in Section \ref{sec:Lower bounds for a fast tagged particle}}. We will mainly discuss the approach to {Theorem \ref{Thm:positive speed for slow moving tagged particle}}. It consists of three parts.

We first \textcolor{black}{start from any initial measure $\nu_0$ and} obtain a candidate $\bar{\nu}$ for the invariant measure in Theorem \ref{Thm:positive speed for slow moving tagged particle} and some estimates of the displacement $D_t$. Let $N_t$ be the number of red particles which initially start from the left of the tagged particle and \textcolor{black}{move to the right of the tagged particle by} time t. By standard martingale arguments and an algebraic identity, \textcolor{black}{we can see that, up to an error of $q(1)-q(-1)$, a multiple of $\mathbb{E}^{\nu_0,q}\left[\frac{N_t}{t}\right]$ is a lower bound for $\mathbb{E}^{\nu_0,q}\left[\frac{D_t}{t}\right]$.}This is done in Section \ref{sec: Invariant Measure and Lower Bound}. On the other hand, \textcolor{black}{we will show the speed of the tagged particle is $\mathbb{E}^{\nu_0,q}\left[\frac{D_t}{t}\right]$ if $\nu_0$ is ergodic.} This is done in Section \ref{sec:proof of main}.

\textcolor{black}{Next, we want to} prove a positive lower bound for $\mathbb{E}^{\nu_0,q}\left[\frac{N_t}{t}\right]$ for some $\nu_0$, \textcolor{black}{and} we use two steps. The first step is to obtain an estimate for $\mathbb{E}^{\nu_0,q}\left[N_t\right] - \mathbb{E}^{\nu_0,0}\left[N_t\right]$, which allows us to consider the case where the tagged particle does not move. This estimate indicates that the case when the tagged particle is moving slowly \textcolor{black}{can} be viewed as a pertubation of the case when the tagged particle is fixed. This estimate requires a coupling result, which is the main subject in Section \ref{sec:coupling}. \textcolor{black}{The existence of coupling requires mainly assumptions \ref{Asmp:attractiveness2} and \ref{Asmp:finite-range2}, and we will show it in Appendix \ref{sec: existence of coupling}.} 

The second step is to prove a positive current \textcolor{black}{ ${\mathbb{E}^{\nu_0,0}\left[\frac{N_t}{t}\right]}$} for some initial measure $\nu_0$. When the tagged particle does not move, the environment process evolves as the AEP with a blockage at site $0$. \textcolor{black}{We consider the case where $\nu_0$ is the step measure $\mu_{1,0}$ and prove that the current is strictly positive by contradiction}. The idea is to consider the limiting measure of an invariant measure under translations $\{\tau_x \bar\nu\}$ in the Ces\`{a}ro sense. We will get an estimate for this limiting measure by comparing this process with another process called AEP on a half line with creation and annihilation. The analysis of the latter process requires a second coupling argument, and follows results and ideas of Liggett, \cite{Li75,Li77}. The second step is done in Section \ref{sec: Current Zero} and Section \ref{sec: ASEP on the half line with destruction}.

We end this section with some remarks on the coupling result to be introduced in Section \ref{sec:coupling} and \textcolor{black}{the current through a fixed bond}.

\begin{remark} \label{rm: hydro and baha}
	\ 
	\begin{enumerate}
		\item Ferrari, Lebowitz, and Speer considered a coupling in \cite{FLS}. This is the same as the couplings  in Section \ref{sec:coupling}. We give an alternative construction in Appendix \ref{sec: existence of coupling}. See Lemma 4.2 in \cite{FLS} and Theorem \ref{Thm: main step} in Appendix \ref{sec: existence of coupling}. \textcolor{black}{ The main improvement in this article is the couplings of two environment processes when the tagged particle has jump rates different from $p(\cdot)$.}
		\item  If $p(2) > p(-1)+2p(-2)$, we can obtain a positive lower bound for $\frac{1}{t}\mathbb{E}^{\nu_0,0}\left[N_t\right]$ using couplings in Section \ref{sec:coupling}. However, the proof does not work with the general assumption \ref{Asmp:attractiveness} so we will use arguments in Section \ref{sec: Current Zero} and Section \ref{sec: ASEP on the half line with destruction} instead.
		\item 
		The step initial measure $\mu_{1,0}$ gives us the maximal value for ${\limsup_{t\to\infty} \frac{1}{t}\mathbb{E}^{\nu_0,0}\left[N_t\right]}$. By Theorem 2.2 from \cite{Ba} and the couplings in Section \ref{sec:coupling}, we can get positive lower bounds for $\frac{1}{t}\mathbb{E}^{\nu_0,0}\left[N_t\right]$ for more general initial measures $\nu_0$, such as Bernoulli measures $\mu_{\rho}$ for $\rho$ \textcolor{black}{close to $0$ or $1$}. This is indeed a hydrodynamic limit result for the AEP with a blockage. 
		
		\item \textcolor{black}{The current through a fixed bound in the AEP with blockage is of its independent interest. The current in the usual AEP starting from a step initial measure $\mu_{1,0}$ is computed explicitly by Liggett \cite{Li75,Li77} as ${\frac{1}{4}\sum z\cdot p(z)}$. However, in the case where there is a local perturbation, the size of current is open. Whether the value of the current in the perturbed system is strictly smaller than ${\frac{1}{4}\sum z\cdot p(z)}$ not well-understood, and it is known as the "Slow Bond Problem". Recently, there is a progress in the nearest-neighbor case by Basu, Sidoravicius and Sly \cite{BSS}. In the current article, we will show the lower bond in the perturbed case is strictly positive in some non-nearest-neighbor cases (Theorem \ref{thm:positive current when it is not fully bocked}). }
	\end{enumerate}
\end{remark}

	\section{Invariant Measure and the Lower Bound for the Displacement of a Tagged Particle} \label{sec: Invariant Measure and Lower Bound}
In this section, \textcolor{black}{we will assume that $q(\cdot)$ is nearest-neighbor and $p(2)=p(-2)$. This simplifies the computation, for some generalization, see Remark \ref{rm: general jumprates}.} We construct a candidate invariant measure by using the empirical measures. We also relate the displacement $D_t$ of the tagged particle to the current \textcolor{black}{through} bond $(-1,1)$. Most results in this section are standard martingale arguments.

We start with a tightness result on $\mathbf{M}_1$ with weak topology. Since $\mathbb{X}=\{0,1\}^{\mathbb{Z}^d\setminus\{0\}}$ is equipped with the product topology, it is compact. By Prokhorov's Theorem, $\mathbf{M}_1$ is precompact with the weak topology.

\textcolor{black}{Define the (random) empirical measure $\mu_t$ for process $\xi_t$ and its mean $\nu_t$ by their actions on local functions:
$\langle\mu_t,f\rangle:=\frac{1}{t}\int_0^t f(\xi_s)\,ds$, and  $\langle \nu_t,f\rangle:=\frac{1}{t}\mathbb{E}^{\nu_0}[\int_0^t f(\xi_s)\,ds],\forall f \in \mathbi{C}$ and $t>0$. We also have $\nu_0 = \lim_{t\downarrow 0} \nu_t$.
We can then obtain $\bar{v}$ as the weak limit of a subsequence $\nu_{T_n}$.} It is an invariant measure by Theorem B7 \cite{Li}.

Let $\mathcal{F}_t:=\sigma(\xi_s:s \leq t)$ and let $N_t$ be the net number of the red particles moving from the left of the tagged particle to the right \textcolor{black}{of the tagged particle} up to time t (or the integrated current \textcolor{black}{through} bond (-1,1)). Since the tagged particle has only nearest-neighbor jumps, the jumps of the tagged particle do not \textcolor{black}{change the value of} $N_t$ and $N_t$ is the difference of two numbers: 
\begin{equation}\label{eq: Nt}
N_t:=R_t - L_t = \sum_{s\leq t}\chi_{\left\lbrace\xi_s=\xi^{-1,1}_{s^{-}},\xi_s(1)=1,\xi_s(-1) =0\right\rbrace} -\sum_{s\leq t}\chi_{\left\lbrace\xi_s=\xi^{-1,1}_{s^{-}},\xi_s(1)=0,\xi_s(-1) =1\right\rbrace}
\end{equation} 

Under $\mathbb{P}^{\xi,q}$, $R_t$ has (varying) jump rates $\lambda_1(\xi_t)=p(2)(1-\xi_t(1))\xi_t(-1)$, and $L_t$ has \textcolor{black}{(varying)} jump rates $\lambda_2(\xi_t)=p(-2)(1-\xi_t(-1))\xi_t(1)$. \textcolor{black}{By using $\mathbb{P}^{\nu_0,q}$-martingales (Chapter 6.2 \cite{KLO})}, and uniform integrability, we can obtain the following result:

\begin{lemma}\label{prop:Nt and Measure} Let jump rates $p(\cdot)$ satisfy assumption \ref{Asmp:positive}.	For a sequence of $T_n\uparrow \infty$ \textcolor{black}{such that} ${\bar{\nu} = \lim_{n\to\infty}\nu_{T_n}}$ is invariant. It has an estimate:
	\begin{equation} \label{Easy Estimate1}
	\textcolor{black}{\langle \bar{\nu},C_{-1,1}\rangle}= \lim_{n\to \infty}\mathbb{E}^{\nu_0,q}\left[\frac{N_{T_n}}{T_n}\right],
	\end{equation} \textcolor{black}{where $C_{-1,1}=p(2)\xi_{-1}(1-\xi_1)-p(-2)\xi_1(1-\xi_{-1})$.}
	Furthermore, if there is a $C_0>0$, such that 
	\begin{equation}  \label{est:lower bound for current}
	\liminf_{t\to \infty} \mathbb{E}^{\nu_0,q}\left[\frac{N_t}{t} \right] \geq C_0, 
	\end{equation}
	we also have 
	\begin{align}   
	\langle\bar{\nu},\xi_{-1}-\xi_{1} \rangle&\geq \frac{C_0}{p(2)} >0. \label{As to Easy Estimate: LB}
	\end{align}
\end{lemma}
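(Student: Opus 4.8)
The plan is to obtain the identity \eqref{Easy Estimate1} from a compensator (Dynkin) computation for the current $N_t$, and then to read off \eqref{As to Easy Estimate: LB} by a short algebraic cancellation that exploits the symmetry $p(2)=p(-2)$ built into \ref{Asmp:positive}. The invariance of $\bar\nu$ I would not prove from scratch: once a subsequence $T_n$ is fixed along which $\nu_{T_n}\to\bar\nu$, invariance is the Krylov--Bogolyubov statement for the Feller process $\xi_t$ on the compact space $\mathbf{M}_1$, i.e. Theorem B7 of \cite{Li}.

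For the identity I would first record that $R_t$ and $L_t$ are counting processes whose stochastic intensities are exactly the stated local functions, $\lambda_1(\xi_t)=p(2)\,\xi_t(-1)(1-\xi_t(1))$ and $\lambda_2(\xi_t)=p(-2)\,\xi_t(1)(1-\xi_t(-1))$; since the tagged particle jumps only to nearest neighbours, its moves act on $\xi_t$ by a shift $\theta_z$ rather than by the exchange $\xi^{-1,1}$, so they never contribute to $N_t$, and the only bond-crossing red moves are the $\pm2$ exchanges between $-1$ and $1$. Dynkin's formula then gives that
\begin{equation*}
M_t := N_t - \int_0^t \bigl(\lambda_1(\xi_s)-\lambda_2(\xi_s)\bigr)\,ds
\end{equation*}
is a local martingale with $M_0=0$. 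As both intensities are bounded (by $p(2)$ and $p(-2)$), $N_t$ is dominated by two Poisson-type clocks, so $M_t$ is a true, uniformly integrable martingale on each finite interval and $\mathbb{E}^{\nu_0,q}[M_t]=0$. Taking expectations, using Fubini, and dividing by $t$ yields
\begin{equation*}
\frac{1}{t}\,\mathbb{E}^{\nu_0,q}[N_t] = \frac{1}{t}\int_0^t \mathbb{E}^{\nu_0,q}\bigl[\lambda_1(\xi_s)-\lambda_2(\xi_s)\bigr]\,ds = \bigl\langle \nu_t,\ p(2)\,\xi_{-1}(1-\xi_1) - p(-2)\,\xi_1(1-\xi_{-1})\bigr\rangle,
\end{equation*}
the last equality being the definition of $\nu_t$ applied to the local function $\lambda_1-\lambda_2$. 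Since $\lambda_1-\lambda_2$ is local, hence bounded and continuous in the product topology, the weak convergence $\nu_{T_n}\to\bar\nu$ passes through the bracket, giving \eqref{Easy Estimate1}.

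For the lower bound, write $a=\langle\bar\nu,\xi_{-1}(1-\xi_1)\rangle$ and $b=\langle\bar\nu,\xi_1(1-\xi_{-1})\rangle$. Identity \eqref{Easy Estimate1} shows that $\lim_n\mathbb{E}^{\nu_0,q}[N_{T_n}/T_n]$ exists and equals $p(2)a-p(-2)b$; being a subsequential limit of $\mathbb{E}^{\nu_0,q}[N_t/t]$, it is bounded below by $\liminf_{t\to\infty}\mathbb{E}^{\nu_0,q}[N_t/t]\geq C_0$. The decisive step is the cancellation of the doubly-occupied term: decomposing $\langle\bar\nu,\xi_{-1}\rangle = a + \langle\bar\nu,\xi_{-1}\xi_1\rangle$ and $\langle\bar\nu,\xi_1\rangle = b + \langle\bar\nu,\xi_{-1}\xi_1\rangle$, the common term drops out, so
\begin{equation*}
\langle\bar\nu,\xi(-1)-\xi(1)\rangle = a - b.
\end{equation*}
Now invoking $p(2)=p(-2)>0$ from \ref{Asmp:positive} we get $C_0 \leq p(2)a - p(-2)b = p(2)(a-b)$, and dividing by $p(2)$ gives \eqref{As to Easy Estimate: LB}.

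The computation is essentially routine, and the only genuinely delicate point is turning Dynkin's local-martingale statement into the true mean-zero identity $\mathbb{E}^{\nu_0,q}[M_t]=0$ together with the Fubini interchange of expectation and time integral; both are supplied by the uniform boundedness of the intensities, which is precisely the \emph{uniform integrability} flagged in the lemma's preamble. Everything else reduces to the definition of $\nu_t$, weak convergence tested against a bounded local function, and the cancellation that is only available because $p(2)=p(-2)$; it is worth emphasizing that this symmetry, and not the drift condition $p(1)>p(-1)$, is what the lemma actually consumes.
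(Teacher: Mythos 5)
Your proposal is correct and follows essentially the same route as the paper: a compensator/martingale identity for the current (the paper writes separate martingales for $R_t$ and $L_t$, you write one for $N_t$ directly, which is equivalent), taking expectations and passing to the weak limit $\nu_{T_n}\to\bar\nu$ against the bounded local function $\lambda_1-\lambda_2$, and then the algebraic cancellation of the $\langle\bar\nu,\xi_{-1}\xi_1\rangle$ term using $p(2)=p(-2)$ -- a step the paper leaves implicit and you correctly spell out.
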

\begin{proof}
	\textcolor{black} {We write two $\mathbb{P}^{\nu_0,q}$--martingales $M_t = R_t-\int_0^t\lambda_1(\xi_s)\,ds$ and $\tilde{M}_t = L_t-\int_0^t\lambda_2(\xi_s)\,ds$. (See Chapter 6.2 \cite{KLO} for details.) Taking expectation with respect to $\mathbb{P}^{\nu_0,q}$}, we obtain
	\[ \langle\nu_{T_n},p(2)\xi_{-1}(1-\xi_{1}))-p(-2)\xi_1(1-\xi_{-1} ) \rangle= \frac{1}{T_n}\mathbb{E}^{\nu_0,q}[N_{T_n}]\]
	Passing through the weak limit, we get the equation \eqref{Easy Estimate1}. As $L_t$ and $R_t$ are both dominated by a Poisson Process with rate 1, $\{\frac{M_t}{t}\}_{t>1}$ is uniformly integrable. Using $p(2)=p(-2)$, we get  \eqref{As to Easy Estimate: LB} from  \eqref{Easy Estimate1},\eqref{est:lower bound for current}.  
\end{proof}

\textcolor{black}{We can also write the displacement of the tagged particle $D_t$ as the difference of two numbers, $r_t$ and $l_t$, the numbers of right jumps and left jumps of the tagged particle:
\begin{equation}\label{eq: Dt}
D_t:=r_t - l_t = \sum_{s\leq t}\chi_{\left\lbrace\xi_s=\theta_1\xi_{s^{-}}\right\rbrace} -\sum_{s\leq t}\chi_{\left\lbrace\xi_s=\theta_{-1}\xi_{s^{-}}\right\rbrace}.
\end{equation}}
With a similar argument, we see the displacement $D_t$ has \textcolor{black}{a lower bound which is a multiple of $C_0$, up to an error (the difference of $q(-1)$ and $q(1)$):} 
\begin{lemma}\label{prop:displacement and spped of tagged} Let jump rates $p(\cdot)$ satisfy assumption \ref{Asmp:positive} and \ref{Asmp:attractiveness}, and $q(\cdot)$ be nearest-neighbor.
	There is a sequence of $T_n \uparrow \infty$ \textcolor{black}{such that} $\bar{\nu} = \lim_{n\to \infty}\nu_{T_n}$ is invariant, and $D_t$ has an estimate:
	\begin{equation} \label{Easy Estimate2}
	q(1)\langle \bar{\nu},1-\xi_1\rangle-q(-1)\langle\bar{\nu}, 1-\xi_{-1}\rangle= \liminf_{t\to \infty}\mathbb{E}^{\nu_0,q}\left[\frac{D_{t}}{t}\right].
	\end{equation}
	Furthermore, \textcolor{black}{if \eqref{est:lower bound for current} holds with $C_0>0$, then}
	\begin{align}  
	\liminf_{t\to \infty}\mathbb{E}^{\nu_0,q}\left[\frac{D_{t}}{t}\right] &\geq \frac{q(1)}{p(2)}C_0 - (q(-1)-q(1)). \label{est: lowerbdd for displacement}
	\end{align}
	
\end{lemma}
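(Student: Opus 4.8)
The plan is to follow the proof of Lemma~\ref{prop:Nt and Measure} verbatim in structure, substituting the displacement $D_t$ for the current $N_t$ and then feeding in the bound \eqref{As to Easy Estimate: LB} that Lemma~\ref{prop:Nt and Measure} supplies for $\bar{\nu}$. Because $q(\cdot)$ is nearest-neighbor, the tagged particle increases $r_t$ by one at rate $q(1)(1-\xi_t(1))$ (a jump to the empty site $1$) and increases $l_t$ by one at rate $q(-1)(1-\xi_t(-1))$ (a jump to the empty site $-1$). First I would use Ito's formula for jump processes to write the two $\mathbb{P}^{\xi,q}$--martingales
\[
r_t - \int_0^t q(1)\left(1-\xi_s(1)\right)\,ds, \qquad l_t - \int_0^t q(-1)\left(1-\xi_s(-1)\right)\,ds,
\]
both started at $0$. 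Since $r_t$ and $l_t$ are each dominated by a Poisson process (of rate $q(1)$, respectively $q(-1)$), the family $\{D_t/t : t>1\}$ is uniformly integrable, which legitimizes taking expectations and passing to the limit as in Lemma~\ref{prop:Nt and Measure}.

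Next I would handle the subsequence bookkeeping, which I expect to be the only genuinely delicate point. The idea is to choose $T_n \uparrow \infty$ realizing $\mathbb{E}^{\nu_0,q}[D_{T_n}/T_n] \to \liminf_{t\to\infty}\mathbb{E}^{\nu_0,q}[D_t/t]$, then thin it (using precompactness of $\mathbf{M}_1$ and the $L^1$-boundedness of $N_t/t$) so that \emph{simultaneously} $\nu_{T_n}\to\bar{\nu}$ weakly and $\mathbb{E}^{\nu_0,q}[N_{T_n}/T_n]$ converges. This triple selection is what forces one and the same $\bar{\nu}$ to govern both estimates. Taking expectations of the two martingales, dividing by $T_n$, and recognizing the time averages as $\langle\nu_{T_n},\cdot\rangle$ gives
\[
\frac{1}{T_n}\mathbb{E}^{\nu_0,q}[D_{T_n}] = \langle \nu_{T_n},\, q(1)(1-\xi_1) - q(-1)(1-\xi_{-1})\rangle,
\]
and the weak limit is exactly \eqref{Easy Estimate2}.

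For the lower bound I would reuse Lemma~\ref{prop:Nt and Measure}: since \eqref{est:lower bound for current} is a $\liminf$, along the chosen subsequence $\lim_n \mathbb{E}^{\nu_0,q}[N_{T_n}/T_n] \geq C_0$, so \eqref{Easy Estimate1} applied to this very $\bar{\nu}$ yields $\langle\bar{\nu},\xi_{-1}-\xi_1\rangle \geq C_0/p(2)$, i.e.\ \eqref{As to Easy Estimate: LB}. Writing $a=\langle\bar{\nu},1-\xi_1\rangle$ and $b=\langle\bar{\nu},1-\xi_{-1}\rangle$, the identity $\xi_{-1}-\xi_1=(1-\xi_1)-(1-\xi_{-1})$ gives $a-b\geq C_0/p(2)$, and the elementary rearrangement
\[
q(1)a - q(-1)b = q(1)(a-b) - (q(-1)-q(1))\,b \geq \frac{q(1)}{p(2)}C_0 - (q(-1)-q(1))
\]
follows from $q(1)>0$, $a-b\geq C_0/p(2)$, and $0\leq b\leq 1$ together with the sign $q(-1)\geq q(1)$ of the relevant negative-drift regime of Theorem~\ref{Thm:positive speed for slow moving tagged particle}; combined with \eqref{Easy Estimate2} this is \eqref{est: lowerbdd for displacement}. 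The main obstacle is thus not any single inequality but the coordination of the subsequences so that a single candidate $\bar{\nu}$ simultaneously realizes the displacement $\liminf$ and inherits the current bound of Lemma~\ref{prop:Nt and Measure}; everything else reduces to the martingale and uniform-integrability scheme already used there.
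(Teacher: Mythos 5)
Your proposal is correct and follows essentially the same route as the paper: the same two jump martingales from Ito's formula, the same algebraic rearrangement $q(1)(1-\xi_1)-q(-1)(1-\xi_{-1}) = q(1)(\xi_{-1}-\xi_1)-(q(-1)-q(1))(1-\xi_{-1})$, and the bound $\langle\bar{\nu},\xi_{-1}-\xi_1\rangle\geq C_0/p(2)$ inherited from Lemma \ref{prop:Nt and Measure}. If anything you are more careful than the paper's terse proof: you correct the (apparently typographical) swap of the rates $q(\pm 1)$ in the paper's stated martingales, you make explicit the subsequence coordination so that a single $\bar{\nu}$ realizes both the displacement $\liminf$ and the current bound, and you flag the sign condition $q(-1)\geq q(1)$ that \eqref{est: lowerbdd for displacement} implicitly requires.
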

\begin{proof}
	It is almost the same as that of Lemma \ref{prop:Nt and Measure}.\textcolor{black}{We notice that,$r_t - q(1)\int_{0}^{t}(1-\xi_s(-1))\,ds $ and $l_t - q(-1)\int_{0}^{t}(1-\xi_s(1))\,ds $ are $\mathbb{P}^{\nu_0,q}$- martingales, and that the left hand side of \eqref{Easy Estimate2} can be rewritten as
	\[q(1)\left<\bar{\nu},\xi_{-1}-\xi_{1}\right> - (q(-1)-q(1))\left<\bar{\nu},1-\xi_{-1}\right>.\]}
\end{proof}

\textcolor{black}{We use $\liminf$ in \eqref{Easy Estimate2} to emphasis that the initial measure $\nu_0$ is arbitrary, and $D_t$ may not have a law of large numbers at this stage.} From the estimate \eqref{est: lowerbdd for displacement} in Lemma \ref{prop:displacement and spped of tagged}, we can get a positive mean for the displacement when the tagged particle has almost symmetric jump rates, \textcolor{black}{i.e.} when $q(-1)-q( 1)$ is small, and $C_0$ is positive.
For the next three sections, we will show how to get a positive $C_0$ with \eqref{est:lower bound for current} in Lemma \ref{prop:Nt and Measure} and \ref{prop:displacement and spped of tagged} \textcolor{black}{for some $\nu_0$}.
	%
	

\section{An Error Estimate and Couplings of Particles on $\mathbb{Z}$} \label{sec:coupling}


The main result of this section is Theorem \ref{thm:replacing estimate}, which gives an estimate of the error $\mathbb{E}^{\nu_0,q}\left[N_t\right]-\mathbb{E}^{\nu_0,0}\left[N_t\right]$. \textcolor{black}{This estimate allows us to consider the problem with a fixed tagged particle instead of a moving tagged particle.} The proof relies on couplings of two auxiliary processes\textcolor{black}{, which is the main tool in this section. The couplings are similar to those in \cite{Ba, FLS}. We will order particles in increasing order, and compare the positions of particles in two processes in pairs. Typically, a particle in the "faster" process have larger coordinates than its paired particle in the "slower" process. By coupling jumps of particles, we will preserve the relative orders of paired particles in the processes.}  \textcolor{black}{Next, we introduce some notions, and show the proof of Theorem \ref{thm:replacing estimate} at the end of this section.} 

\subsection{Auxiliary Processes}
We can view the environment process $\xi_t$ of the asymmetric exclusion process with a tagged particle in another way. We can label all red particles according to the initial configuration in an ascending order, and track their relative positions with respect to the tagged particle.

Starting from an initial configuration $\xi$ with infinitely many particles on both sides of zero, we label particles with their initial positions as $\vec{X}_0 = (X_i)_{i\in \mathbb{Z}} \in \mathbb{Z}^{\mathbb{Z}}=\tilde{\mathbb{X}}$. Particularly, $\vec{X}_0$ satisfies
\begin{equation} \label{Cond:ascending inital configuration}
\dots < X_{-2}<X_{-1}<X_0 < X_1 <X_2 <\dots 
\end{equation}
and
\[
\xi(x) = 1   \Leftrightarrow  X_i= x, \text{for some }i.
\]

\textcolor{black}{To extend to the case when} there are finitely many particles to the right or the left of zero, it is also convenient for us to add particles at $+\infty$ and $-\infty$, and therefore, we would enlarge the state space to $\hat{\mathbb{X}} = (\mathbb{Z}\bigcup\{-\infty,\infty\})^{\mathbb{Z}}$. For example, for the step measure $\mu_{1,0}$, we can label particles as:
\[ \dots < X_{-2}=-3<X_{-1}=-2 <X_0=-1 < X_1 =\infty \leq X_2 = \infty \leq \dots \] \textcolor{black}{Also, there is no particular rule for the choice of $X_0$.}

\textcolor{black}{For each initial configuration $\vec{X}_0$ satisfying \eqref{Cond:ascending inital configuration}, there is a Markov process $\vec{X}_t$ with generator $\tilde{\mathit{L}}$ corresponding to the process $\xi_t$ with initial configuration $\xi$. Particularly, for any $t>0$, $\vec{X}_t$ also satisfies \eqref{Cond:ascending inital configuration}. Note that there are multiple $\vec{X}_0$ corresponding to $\xi$, so to the process $\xi_t$ there correspond multiple processes $\vec{X}_t$.} There are also two types of jumps for the auxiliary process, \textcolor{black}{corresponding} to jumps \eqref{usual exchange} and \eqref{usual translation}. The first occurs when the i-th particle jumps to an empty target site $X_i +z$; the second occurs when the tagged particle jumps to an empty target site $z$. Let $T_{i,z}\vec{X}$ and $\Theta_z\vec{X}$ represent the configurations after these two jumps respectively. We can see two types of jumps in Figure \ref{Figure 1} and Figure \ref{Figure 2}. 

\begin{figure}[h!]
	\begin{center}
		\begin{tikzpicture}
		\draw[black, thick] (-4.3,0) -- (4.3,0) node[black,right] {$\vec{X}$};  
		\draw [black,fill] (-0.1,0.1) rectangle (0.1,-0.1) node [black,below=4] {Tagged}; 
		\foreach \x in {-4,...,4}
		\draw (-\x,0) circle (0.1);
		\draw [black,fill] (-4,0) circle [radius=0.1] node [black,below=4] {$X_{-3}$}; 
		\draw [black,fill] (-3,0) circle [radius=0.1] node [black,below=4] {$X_{-2}$}; 
		\draw [black,fill] (-1,0) circle [radius=0.1] node [black,below=4] {$X_{-1}$}; 
		\draw [black,fill] (2,0) circle [radius=0.1] node [black,below=4] {$X_{0}$}; 
		
		\draw [dashed,->] (-3.9,0.3) 
		to [out=45,in=181] (-3,0.8)
		to [out=-1,in=135] (-2.1,0.3);
		
		\draw[black, thick] (-4.3,-1) -- (4.3,-1) node[black,right] {$T_{-3,2}\vec{X}$};  
		\draw [black,fill] (-0.1,0.1-1) rectangle (0.1,-0.1-1) node [black,below=4] {Tagged}; 
		
		\foreach \x in {-4,...,4}
		\draw (-\x,0-1) circle (0.1);
		\draw [black,fill] (-3,0-1) circle [radius=0.1] node [black,below=4] {$X_{-3}$}; 
		\draw [black,fill] (-2,0-1) circle [radius=0.1] node [black,below=4] {$X_{-2}$}; 
		\draw [black,fill] (-1,0-1) circle [radius=0.1] node [black,below=4] {$X_{-1}$}; 
		\draw [black,fill] (2,0-1) circle [radius=0.1] node [black,below=4] {$X_{0}$}; 
		\end{tikzpicture}
	\end{center}
	\caption{Red Particle $X_{-3}$ Jumps 2 Units}
	\label{Figure 1}
\end{figure}
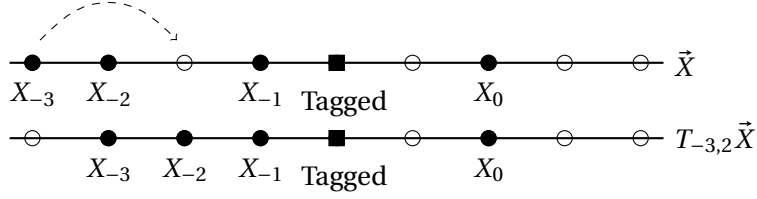

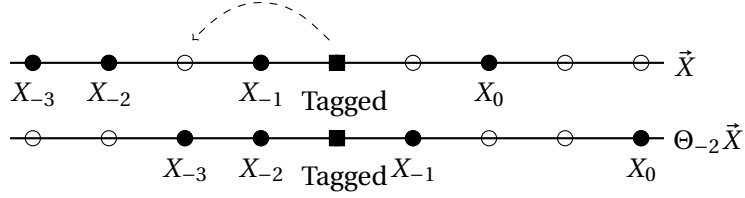
\begin{figure}[h!] 
	\begin{center}
		\begin{tikzpicture} 
		\draw[black, thick] (-4.3,0) -- (4.3,0)node[black,right] {$\vec{X}$};  
		\draw [black,fill] (-0.1,0.1) rectangle (0.1,-0.1) node [black,below=4] {Tagged}; 
		
		\foreach \x in {-4,...,4}
		\draw (-\x,0) circle (0.1);
		\draw [black,fill] (-4,0) circle [radius=0.1] node [black,below=4] {$X_{-3}$}; 
		\draw [black,fill] (-3,0) circle [radius=0.1] node [black,below=4] {$X_{-2}$}; 
		\draw [black,fill] (-1,0) circle [radius=0.1] node [black,below=4] {$X_{-1}$}; 
		\draw [black,fill] (2,0) circle [radius=0.1] node [black,below=4] {$X_{0}$}; 
		
		\draw [dashed,->] (-0.1,0.3) 
		to [out=135,in=-1] (-1,0.8)
		to [out=181,in=45] (-1.9,0.3);

		\draw[black, thick] (-4.3,-1) -- (4.3,-1) node[black,right] {$\Theta_{-2}\vec{X}$};  
		\draw [black,fill] (-0.1,0.1-1) rectangle (0.1,-0.1-1) node [black,below=4] {Tagged}; 
		
		\foreach \x in {-4,...,4}
		\draw (-\x,0-1) circle (0.1);
		\draw [black,fill] (-2,0-1) circle [radius=0.1] node [black,below=4] {$X_{-3}$}; 
		\draw [black,fill] (-1,0-1) circle [radius=0.1] node [black,below=4] {$X_{-2}$}; 
		\draw [black,fill] (1,0-1) circle [radius=0.1] node [black,below=4] {$X_{-1}$}; 
		\draw [black,fill] (4,0-1) circle [radius=0.1] node [black,below=4] {$X_{0}$}; 
		\end{tikzpicture}
	\end{center}
	\caption{Tagged Particle Jumps -2 Units}
	\label{Figure 2}
\end{figure}

\textcolor{black}{
For any $z\neq 0$, we have $\Theta_z\vec{X}$ as,
\begin{equation}
(\Theta_z\vec{X})_j =X_j-z.  \label{auxiliary translation}
\end{equation}
}
\textcolor{black}{
For $z>0$, we denote the index of the right-most particle to the left of site $X_i+ z$ by  $I_{i,z}(\vec{X})$,
\begin{equation}
I_{i,z}(\vec{X}) = \max\{k:X_k \leq X_i+z\}. \label{eq:index of right jump} 
\end{equation}}

When positive jump of size $z$ is possible for the i-th particle, the new configuration after jump is
\begin{align}
(T_{i,z}\vec{X})_j =&
\begin{cases}
X_j & \text{if }j<i \text{ or } j >I_{i,z}(\vec{X})\\
X_{j+1} & \text{if } i\leq j <I_{i,z}(\vec{X}) \\
X_i+z   & \text{if } j = I_{i,z}(\vec{X})
\end{cases}\label{auxiliary exchange}.\
\end{align}
The conditions for these two types of jumps are $B_z=\{z \notin \vec{X}\}$, and $A_{i,z}= \{X_i+z \notin \vec{X}\cup \{0\}\}$, respectively. Here we also think \textcolor{black}{of} $\vec{X}$ as a subset of $\mathbb{Z}$ (instead of $\mathbb{Z}\cup{\{-\infty,\infty\}}$).

\begin{remark} \label{rm:cov and reverse}
	For negative jumps $z<0$, we can think of the \textcolor{black}{dynamics} by reversing the lattice $\mathbb{Z}$. That is, with a change of variable, $\vec{Y} = \{Y_i\}_{i\in\mathbb{Z}} = R(\vec{X})$, we have 
	\textcolor{black}{
	\begin{equation}
	\left(R(\vec{X})\right)_{i}=Y_i=-X_{-i}
	\end{equation}
	\begin{equation} \label{eq: reverse T_i,z}
	(T_{i,z}\vec{X}) = R(T_{-i,-z}(R(\vec{X}))
	\end{equation}
	\begin{equation} \label{eq: reverse I_i,z}
	I_{i,z}(\vec{X}) = - I_{-i,-z}(R(\vec{X}))=\min\{k: X_k\geq X_i+z\} 
	\end{equation}}
	For $z=0$, we take $T_{i,0}$ as the identity map and $I_{i,0}=i$.
\end{remark}

Therefore, we can write down the generator $\tilde{\mathit{L}}$ for the auxiliary process $\vec{X}_t$ by its action on local functions \textcolor{black}{$F: \hat{\mathbb{X}}\longrightarrow \mathbb{R}$ (i.e. $F(\vec{X})$ depends on a finite set $\{X_i\}$) as}:

\begin{align}
\tilde{\mathit{L}}F(\vec{X}) =& (\tilde{\mathit{L}}^{ex}+\tilde{\mathit{L}}^{sh}) F(\vec{X}) \notag \\
=&\sum_{i,z}p(X_i,X_i+z)\mathbb{1}_{A_{i,z}}(\vec{X})\left[F(T_{i,z}\vec{X})-F(\vec{X})\right] \notag \\
+&\sum_{y}q(y)\mathbb{1}_{B_{y}}(\vec{X})\left[F(\Theta_y\vec{X})-F(\vec{X})\right].  \label{Eq:auxiliary generator}
\end{align}
The transition rates \textcolor{black}{are} $p(x,y) = p(y-x)$ if $x,y \neq 0,\pm\infty$, and $p(x,y) = 0$ \textcolor{black}{otherwise}.

\subsection{Shifts of Labels}
\textcolor{black}{In the environment process, a jump of the tagged particle influences coordinates of all red particles while jumps of red particles influence only finitely many coordinates. This may create a problem for the coupling. For example, when only the tagged particle in the slower system jumps towards left and the tagged particle in the faster system stays, the coordinates of all red particles in the slower system increase but those in the faster system stay the same. In order to offset this type of global effects from of the tagged particle and preserve the order of two processes, we use shifts of labels.} 

\textcolor{black}{For couplings, we also consider two other versions of auxiliary processes with shifts of labels.}  Let $S_z\vec{X}$ represents the configuration after shifting labels by $z$,
\begin{equation} \label{shift labels}
(S_z\vec{X})_j = X_{j+z}
\end{equation}

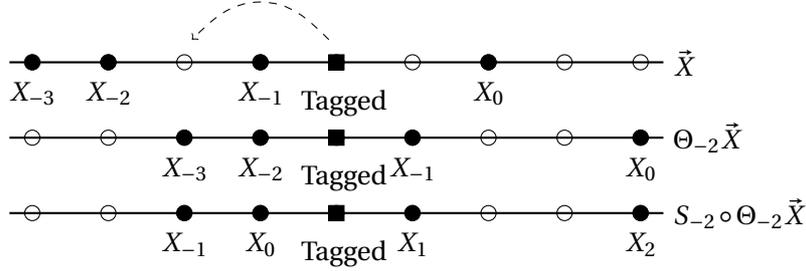
\begin{figure}[h!]
	\begin{center}
		\begin{tikzpicture} 
		\draw[black, thick] (-4.3,0) -- (4.3,0)node[black,right] {$\vec{X}$};  
		\draw [black,fill] (-0.1,0.1) rectangle (0.1,-0.1) node [black,below=4] {Tagged}; 
		
		\foreach \x in {-4,...,4}
		\draw (-\x,0) circle (0.1);
		\draw [black,fill] (-4,0) circle [radius=0.1] node [black,below=4] {$X_{-3}$}; 
		\draw [black,fill] (-3,0) circle [radius=0.1] node [black,below=4] {$X_{-2}$}; 
		\draw [black,fill] (-1,0) circle [radius=0.1] node [black,below=4] {$X_{-1}$}; 
		\draw [black,fill] (2,0) circle [radius=0.1] node [black,below=4] {$X_{0}$}; 
		
		\draw [dashed,->] (-0.1,0.3) 
		to [out=135,in=-1] (-1,0.8)
		to [out=181,in=45] (-1.9,0.3);

		\draw[black, thick] (-4.3,-1) -- (4.3,-1) node[black,right] {$\Theta_{-2}\vec{X}$};  
		\draw [black,fill] (-0.1,0.1-1) rectangle (0.1,-0.1-1) node [black,below=4] {Tagged}; 
		
		\foreach \x in {-4,...,4}
		\draw (-\x,0-1) circle (0.1);
		\draw [black,fill] (-2,0-1) circle [radius=0.1] node [black,below=4] {$X_{-3}$}; 
		\draw [black,fill] (-1,0-1) circle [radius=0.1] node [black,below=4] {$X_{-2}$}; 
		\draw [black,fill] (1,0-1) circle [radius=0.1] node [black,below=4] {$X_{-1}$}; 
		\draw [black,fill] (4,0-1) circle [radius=0.1] node [black,below=4] {$X_{0}$}; 

		\draw[black, thick] (-4.3,-2) -- (4.3,-2) node[black,right] {$S_{-2}\circ\Theta_{-2}\vec{X}$};  
		\draw [black,fill] (-0.1,0.1-2) rectangle (0.1,-0.1-2) node [black,below=4] {Tagged}; 
		
		\foreach \x in {-4,...,4}
		\draw (-\x,0-2) circle (0.1);
		\draw [black,fill] (-2,0-2) circle [radius=0.1] node [black,below=4] {$X_{-1}$}; 
		\draw [black,fill] (-1,0-2) circle [radius=0.1] node [black,below=4] {$X_{0}$}; 
		\draw [black,fill] (1,0-2) circle [radius=0.1] node [black,below=4] {$X_{1}$}; 
		\draw [black,fill] (4,0-2) circle [radius=0.1] node [black,below=4] {$X_{2}$}; 
		
		\end{tikzpicture}
	\end{center}
	\caption{Tagged Particle Jumps -2 Units with Labels Shifted}
	\label{Figure 3}
\end{figure}

In addition to shifting configurations when a tagged particle jumps, we also shift labels after shifting the configurations. See Figure \ref{Figure 3}. We obtain the first version by adding a shift of labels by z after the tagged particle has a right jump with z units, that is,
\begin{align}
\tilde{\mathit{L}}_RF(\vec{X}) =& (\tilde{\mathit{L}}^{ex}+\tilde{\mathit{L}}^{sh,q_-}+\tilde{\mathit{L}}_R^{sh,q_+}) F(\vec{X}) \notag \\
=&\sum_{i,z}p(X_i,X_i+z)\mathbb{1}_{A_{i,z}}(\vec{X})\left[F(T_{i,z}\vec{X})-F(\vec{X})\right] \notag \\
+&\sum_{y<0}q(y)\mathbb{1}_{B_{y}}(\vec{X})\left[F(\Theta_y\vec{X})-F(\vec{X})\right] \notag \\
+&\sum_{y>0}q(y)\mathbb{1}_{B_{y}}(\vec{X})\left[F(S_y\circ\Theta_y\vec{X})-F(\vec{X})\right] \label{Eq:auxiliary generator-Right}
\end{align}

Similarly, we can have the second version by shifting labels after the tagged particle takes a left jump.
\begin{align}
\tilde{\mathit{L}}_LF(\vec{X}) =& (\tilde{\mathit{L}}^{ex}+\tilde{\mathit{L}}_L^{sh,q_-}+\tilde{\mathit{L}}^{sh,q_+}) F(\vec{X}) \notag \\
=&\sum_{i,z}p(X_i,X_i+z)\mathbb{1}_{A_{i,z}}(\vec{X})\left[F(T_{i,z}\vec{X})-F(\vec{X})\right] \notag \\
+&\sum_{y<0}q(y)\mathbb{1}_{B_{y}}(\vec{X})\left[F(S_y\circ\Theta_y\vec{X})-F(\vec{X})\right] \notag \\
+&\sum_{y>0}q(y)\mathbb{1}_{B_{y}}(\vec{X})\left[F(\Theta_y\vec{X})-F(\vec{X})\right] \label{Eq:auxiliary generator-Left}
\end{align}

We will use $\vec{X}_t = (\vec{X}_0,G,p,q)$ to denote the auxiliary process with $\vec{X}_0$ as the initial configuration, and generator $G$. Particularly, G is one of the forms \eqref{Eq:auxiliary generator},\eqref{Eq:auxiliary generator-Right}, and \eqref{Eq:auxiliary generator-Left} with $p,q$ as parameters. And we use $\mathbb{P}^{(\vec{X}_0,G,p,q)}$ or $\mathbb{P}^{\vec{X}_t}$ to denote the corresponding probability measure on the space of c\'{a}dl\'{a}g paths on $\hat{\mathbb{X}}$. $\vec{X_0}$ can also be random.

\subsection{Couplings of Auxiliary Processes and Error Estimates}
There is a natural partial order on the set $\hat{\mathbb{X}}$:
\begin{equation}\label{eq:partial oder}
\vec{X} \geq \vec{Y} \Leftrightarrow  X_i \geq Y_i, \text{ for all } i.  \end{equation}
With this partial order, 
we can use stochastic ordering to define couplings of two auxiliary processes $\vec{X}_t= (\vec{X}_0,G,p,q)$ and $\vec{Y}_t=(\vec{Y}_0,G',p',q')$.
\begin{definition}\label{def:coupling}
	\textcolor{black}{We denote $\vec{X}_t \succeq \vec{Y}_t$, if two auxiliary proccesses $\vec{X}_t$ and $\vec{Y}_t$ can be coupled:} that is, there exists a joint process $\vec{Z}_t = (\vec{W}_t,\vec{V}_t)$, with a joint generator  $\Omega$ on space of local functions $F:\tilde{\mathbb{X}}\times\tilde{\mathbb{X}}\mapsto \mathbb{R}$, such that
	\begin{enumerate}
		\item $\vec{W}_t \geq \vec{V}_t, \mathbb{P}^{\vec{Z}_t}\--a.s.$
		\item $\vec{Z}_t$ has marginals as $\vec{X}_t$ and $\vec{Y}_t$. That is, for any local functions $F_1(\vec{X},\vec{Y})= H_1(\vec{X})$, and $F_2(\vec{X},\vec{Y})= H_2(\vec{Y})$, we have,
		\begin{align*}
		\Omega F_1(\vec{X},\vec{Y})&= G H_1(\vec{X})\\
		\Omega F_2(\vec{X},\vec{Y})&= G' H_2(\vec{Y})\\
		\vec{W}_0 \overset{d}{=} \vec{X}_0&,\  \vec{V}_0 \overset{d}{=} \vec{Y}_0
		\end{align*}
	\end{enumerate}  
\end{definition}

Our main step towards Theorem \ref{thm:replacing estimate} is the existence of couplings of auxiliary processes. The construction of the couplings is done in Appendix A.
\begin{theorem}\label{thm:coupling}
	Let $p(\cdot)$ satisfy assumption \ref{Asmp:attractiveness} and two initial configurations satisfy ${\vec{X}_0 \geq \vec{Y}_0}$. For any $q(\cdot)$, we can couple below two pairs of auxiliary processes:
	\begin{align}
	(\vec{X}_0,\tilde{L}_R,p,q) &\succeq ( \vec{Y}_0,\tilde{L},p,0) \\
	(\vec{X}_0,\tilde{L},p,0) &\succeq (\vec{Y}_0,\tilde{L}_L,p,q)
	\end{align}
\end{theorem}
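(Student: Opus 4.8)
The plan is to construct, for each of the two assertions, an explicit joint generator $\Omega$ on $\tilde{\mathbb X}\times\tilde{\mathbb X}$ and to check that it has the marginals prescribed by Definition~\ref{def:coupling} and keeps its upper coordinate $\vec W_t$ above its lower coordinate $\vec V_t$ in the order \eqref{eq:partial oder}. Since the exchange rates $p(\cdot)$ are identical in both coordinates and only the tagged particle is treated asymmetrically, I would build $\Omega$ from an exchange part and a tagged part and verify order preservation separately for each kind of elementary move; because the set $\{\vec W\ge\vec V\}$ is closed, it is enough to see that every transition of $\Omega$ maps this set into itself.

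The tagged moves are the easy part, and the label shifts $S_y$ in \eqref{Eq:auxiliary generator-Right} and \eqref{Eq:auxiliary generator-Left} are tailored for exactly this. In the first coupling only the upper coordinate $\vec W$ (run with $\tilde L_R$) has a moving tagged particle, while $\vec V$ (run with $q=0$) is frozen, so I only need $(S_y\circ\Theta_y\vec W)_j\ge V_j$ for a right jump $y>0$ and $(\Theta_y\vec W)_j\ge V_j$ for a left jump $y<0$. Since the finite coordinates are strictly increasing integers one has $W_{j+y}\ge W_j+y$ (the infinite case being immediate), so $(S_y\circ\Theta_y\vec W)_j=W_{j+y}-y\ge W_j\ge V_j$, and $(\Theta_y\vec W)_j=W_j-y\ge W_j\ge V_j$ when $y<0$. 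The second coupling is the mirror image: only $\vec V$ (run with $\tilde L_L$) moves, a left jump gives $(S_y\circ\Theta_y\vec V)_j=V_{j-\abs y}+\abs y\le V_j\le W_j$ by the same gap estimate $V_j\ge V_{j-\abs y}+\abs y$, and a right jump only lowers $V_j$. Thus every tagged transition preserves $\vec W\ge\vec V$, for arbitrary rates $q(\cdot)$.

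The exchange part is where assumption \ref{Asmp:attractiveness} is needed, and it is the main obstacle. Away from the origin I would drive both coordinates with a common family of clocks indexed by $(i,z)$: when such a clock rings, particle $i$ performs $T_{i,z}$ in whichever coordinate the move is admissible. Size-$1$ moves coupled this way preserve the order as in the ordinary exclusion process, but a displacement $z=\pm2$ may be admissible in one coordinate and blocked in the other, and after the relabeling encoded by $I_{i,z}$ the naive common-clock rule can invert a pair $W_i\ge V_i$. The blockage at $0$ sharpens the point, since the only red crossing of the tagged particle is a size-$2$ step between $-1$ and $+1$. The remedy is to spend the surplus $p(-1)-p(-2)\ge0$ furnished by \ref{Asmp:attractiveness}: a length-$2$ move in the coordinate where it is admissible is matched with a length-$1$ move, or with no move, in the other coordinate, the extra length-$1$ rate absorbing the mismatch so that no transition breaks the order. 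Checking, case by case, that the resulting paired moves keep $\vec W\ge\vec V$ is the technical core, and it is the construction deferred to Appendix~\ref{sec: existence of coupling}.

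Assembling the exchange and tagged parts into one generator $\Omega$, the marginal identities of Definition~\ref{def:coupling} hold by construction, since restricting a function to either coordinate reproduces $\tilde L_R$, $\tilde L$, or $\tilde L_L$ as appropriate; and $\vec W_t\ge\vec V_t$ holds for all $t$ because it holds at $t=0$ and is preserved by every elementary move. This gives both stochastic orderings asserted in the theorem.
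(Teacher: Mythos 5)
Your overall architecture matches the paper's: split the joint generator into a tagged part and an exchange part, check order preservation transition by transition on the closed set $\{\vec W \geq \vec V\}$, and combine the pieces additively (this additivity is the paper's Lemma \ref{lm:combining two pairs}). Your treatment of the tagged jumps is correct and complete, and it coincides with the paper's Lemma \ref{lm:jumps preserve ordering}: under $\tilde{L}_R$ every tagged move (with the label shift $S_y$ after right jumps) raises the upper coordinate, under $\tilde{L}_L$ every tagged move lowers the lower coordinate, so arbitrary rates $q(\cdot)$ cause no difficulty.

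The genuine gap is the exchange part, which you do not actually prove: you write that checking case by case that the paired moves preserve $\vec W \geq \vec V$ is ``the construction deferred to Appendix \ref{sec: existence of coupling}.'' But that construction \emph{is} the content of the theorem --- the statement asserts the existence of the coupling, and the paper's proof of it is exactly that appendix. There, Lemma \ref{lm: coupling preserve i-th} constructs, for each jump of length $z$ by particle $i$ of the lower configuration, a configuration-dependent response $z' = C(\vec X,\vec Y,i,R,z)\geq 0$ for the same-labelled particle of the upper configuration by matching holes (the right-most admissible hole of $\vec X$ to the left of the target hole of $\vec Y$), and proves $T_{i,z'}\vec X \geq T_{i,z}\vec Y$ by an induction over the holes, using the particle-insertion map $K$; Theorem \ref{Thm: main step} then turns this into a generator with the correct marginals via the paired rates $p_{i,s,z}$ and the leftover solo rates $p_{i,s,0}$, whose nonnegativity is precisely where attractiveness enters. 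Your sketch (``a length-$2$ move matched with a length-$1$ move or no move, the surplus absorbing the mismatch'') is in the right spirit but is not a proof: you never specify which rates are attached to paired versus solo moves, so the marginal identities of Definition \ref{def:coupling} cannot be verified; the surplus needed for positive jumps is $p(1)-p(2)>0$, which follows from \ref{Asmp:positive} together with \ref{Asmp:attractiveness}, not the $p(-1)-p(-2)\geq 0$ you cite (that handles only the negative direction); and the order preservation of the paired moves after the relabelling encoded by $I_{i,z}$ --- including jumps across the blockage --- is the nontrivial claim that the paper's induction exists to establish. As written, the proposal proves the easy half and reduces the hard half to itself.
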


\begin{proof}
	See Theorem \ref{Thm: main step} in Appendix A.
\end{proof}

 Above two couplings provide a lower bound and an upper bound of the error $\mathbb{E}^{\nu_0,q}\left[N_t\right]-\mathbb{E}^{\nu_0,0}\left[N_t\right]$ respectively, and we can estimate the error by the number of jumps of the tagged particle.
\begin{theorem} \label{thm:replacing estimate}
	Let $p(\cdot)$ satisfy assumption \ref{Asmp:attractiveness}, and the tagged particle takes nearest-neighbor jumps, with rates $q(-1),q(1)$. For any (deterministic) initial configuration  $\xi$, and any $t\geq 0$,
	\begin{equation} \label{eq:error estimate}
	\abs{\mathbb{E}^{\xi,q}\left[N_t\right]-\mathbb{E}^{\xi,0}\left[N_t\right]} \leq t\cdot(q(1) + q(-1))
	\end{equation}
\end{theorem}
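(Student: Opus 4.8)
The plan is to push the two couplings of Theorem \ref{thm:coupling} through a single monotone, label-dependent functional that records the current $N_t$ up to an error carried entirely by the jumps of the tagged particle. For a labelled configuration $\vec{X}\in\hat{\mathbb{X}}$ satisfying \eqref{Cond:ascending inital configuration}, let $b(\vec{X})=\max\{i:X_i<0\}$ be the label of the rightmost red particle to the left of the tagged particle, and set
\[
\Psi(\vec{X})=-b(\vec{X})=\#\{i\le 0:X_i>0\}-\#\{i\ge 1:X_i<0\}.
\]
The second expression shows that $\Psi$ is finite and, crucially, \emph{monotone nondecreasing} for the partial order \eqref{eq:partial oder}: raising coordinates can only move a label $i\le 0$ to the right of $0$ or pull a label $i\ge 1$ back to the right of $0$. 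Hence in any coupling $\vec{W}_t\succeq\vec{V}_t$ we have $\Psi(\vec{W}_t)\ge\Psi(\vec{V}_t)$ almost surely.

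Next I would establish, pathwise in each of the three dynamics, the decomposition $\Psi(\vec{X}_t)-\Psi(\vec{X}_0)=N_t+\varepsilon_t$, read off jump by jump. By assumption \ref{Asmp:attractiveness} ($p(k)=0$ for $|k|>2$), the only way a red particle crosses the tagged particle is a $\pm 2$ exchange between the sites $\mp 1$ and $\pm 1$, which changes $b$ by $\mp 1$ and so contributes exactly the increment of $N_t$. Every other exchange $T_{i,z}$, including the order-preserving internal relabelling that occurs when $I_{i,z}>i$, leaves $b$ unchanged, because a non-crossing exchange keeps the relabelled block $\{i,\dots,I_{i,z}\}$ entirely on one side of the boundary. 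For the tagged moves the pure shift $\Theta_{\pm 1}$ never changes $b$ (a nearest-neighbour tagged jump lands on an empty neighbour and crosses no red particle), while the label shift $S_{\pm 1}$ changes $b$ by $\mp 1$. Thus $\varepsilon_t=0$ for $(\xi,\tilde{L},p,0)$, $\varepsilon_t=J^+_t$ for $(\xi,\tilde{L}_R,p,q)$, and $\varepsilon_t=-J^-_t$ for $(\xi,\tilde{L}_L,p,q)$, where $J^\pm_t$ counts the right/left jumps of the tagged particle. I would also record that $N_t$ has the same law under $\tilde{L},\tilde{L}_R,\tilde{L}_L$ (with fixed $p,q$) and equals the environment current of $\mathbb{P}^{\xi,q}$: relabelling never alters the occupied-site set, so all three generators project to the environment generator \eqref{Eq:generator}.

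With these ingredients the estimate is two applications of Theorem \ref{thm:coupling} with $\vec{X}_0=\vec{Y}_0=\xi$. For the lower bound, couple $(\xi,\tilde{L}_R,p,q)\succeq(\xi,\tilde{L},p,0)$; writing $N_t,N'_t$ for the currents of the two marginals, monotonicity of $\Psi$ and the decomposition give $N_t+J^+_t\ge N'_t$ almost surely, and taking expectations together with $\mathbb{E}[J^+_t]=\mathbb{E}\int_0^t q(1)(1-\xi_s(1))\,ds\le q(1)t$ yields $\mathbb{E}^{\xi,q}[N_t]\ge\mathbb{E}^{\xi,0}[N_t]-q(1)t$. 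For the upper bound, couple $(\xi,\tilde{L},p,0)\succeq(\xi,\tilde{L}_L,p,q)$, obtaining $N'_t\ge N_t-J^-_t$ and hence $\mathbb{E}^{\xi,q}[N_t]\le\mathbb{E}^{\xi,0}[N_t]+q(-1)t$. Combining the two bounds gives $\abs{\mathbb{E}^{\xi,q}[N_t]-\mathbb{E}^{\xi,0}[N_t]}\le\max(q(1),q(-1))\,t\le(q(1)+q(-1))\,t$, which is \eqref{eq:error estimate}; integrability is automatic since $N_t$ and $J^\pm_t$ are dominated by rate-$1$ Poisson processes.

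The step I expect to be most delicate is the pathwise bookkeeping of $\Psi$ in the second paragraph: one must verify that neither the internal relabelling inside an exchange jump nor the shift $\Theta_{\pm 1}$ ever alters the boundary index $b$, so that the entire discrepancy between the two currents is carried by the label shifts $S_{\pm 1}$ and is therefore controlled by the number of tagged-particle jumps. This is precisely where assumption \ref{Asmp:attractiveness} and the nearest-neighbour hypothesis on $q(\cdot)$ enter, restricting crossings to $\pm 2$ bonds and tagged jumps to $\pm 1$.
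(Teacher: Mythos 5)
Your proof is correct and is essentially the paper's own argument: your functional $\Psi$ is exactly the negative of the paper's $F(\vec{X})=\max\{i:X_i\le -1\}$, and you use the same two couplings from Theorem \ref{thm:coupling}, the same pathwise decomposition of the boundary-label increment into current plus tagged-jump label shifts, and the same expectation bound via the compensators of $J^{\pm}_t$. The only (harmless) difference is that you make the jump-by-jump bookkeeping and the identification of the marginal current with the environment current of \eqref{Eq:generator} more explicit, and you record the slightly sharper bound $\max(q(1),q(-1))\,t$ before relaxing it to the stated $(q(1)+q(-1))\,t$.
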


\begin{proof} 
	For any non-zero configuration $\xi$ in $\mathbb{X}$, we can label the particles as $\vec{X_0} = \{X_i\}_{i\in \mathbb{Z}}$, 
	\[\dots \leq X_{-2}\leq X_{-1}\leq X_0 < 0 < X_1 \leq X_2 \leq \dots \]
	and equality occurs if both sides are $\infty$ or $-\infty$. By Theorem \ref{thm:coupling}, from the same initial configuration, we have two couplings, 
	\begin{align} \label{eq: important coupling}
	\vec{X}_t=(\vec{X}_0,\tilde{L}_R,p,q) &\succeq ( \vec{X}_0,\tilde{L},p,0) = \vec{Y}_t \notag\\
	\vec{Y}_t =(\vec{X}_0,\tilde{L},p,0) &\succeq (\vec{X}_0,\tilde{L}_L,p,q) = \vec{Z}_t.
	\end{align}
	
	Consider a function $F: \hat{X} \to \mathbb{Z}$, $F(\vec{X}) = \max\{i:X_i \leq -1\}$. It is decreasing in $\vec{X}$, that is, if $\vec{X} \geq \vec{Y}$
	\begin{equation} \label{monotone function}
	F(\vec{X}) \leq F(\vec{Y})
	\end{equation}
	
	Therefore, we get, under two joint distributions \textcolor{black}{(one for the coupling $\vec{X}_t\succeq \vec{Y}_t$, and the other for the coupling $\vec{Y}_t\succeq \vec{Z}_t$) and $\vec{X}_0=\vec{Y}_0=\vec{Z}_0$,}
	\begin{align}
	F(\vec{X}_0)- F(\vec{X}_t) &\geq F(\vec{Y_0})-F(\vec{Y}_t),  a.s., \label{ineq:index change1} \\F(\vec{Y}_0)- F(\vec{Y}_t) &\geq F(\vec{Z_0})-F(\vec{Z}_t),  a.s.. \label{ineq:index change2}
	\end{align}
	
	\begin{figure}[h!]
		\begin{center}
			\begin{tikzpicture} 
			\draw[black, thick] (-4.3,0) -- (4.3,0)node[black,right] {$\vec{X}$};  
			\draw [black,fill] (-0.1,0.1) rectangle (0.1,-0.1) node [black,below=4] {Tagged}; 
			
			\foreach \x in {-4,...,4}
			\draw (-\x,0) circle (0.1);
			\draw [black,fill] (-4,0) circle [radius=0.1] node [black,below=4] {$X_{-3}$}; 
			\draw [black,fill] (-3,0) circle [radius=0.1] node [black,below=4] {$X_{-2}$}; 
			\draw [black,fill] (-1,0) circle [radius=0.1] node [black,below=4] {$X_{-1}$}; 
			\draw [black,fill] (2,0) circle [radius=0.1] node [black,below=4] {$X_{0}$}; 
			
			\draw [dashed,->] (0,0.2) arc (180:0:0.5);

			\draw[black, thick] (-4.3,-1) -- (4.3,-1) node[black,right] {$\Theta_{1}\vec{X}$};  
			\draw [black,fill] (-0.1,0.1-1) rectangle (0.1,-0.1-1) node [black,below=4] {Tagged}; 
			
			\foreach \x in {-4,...,4}
			\draw (-\x,0-1) circle (0.1);
			\draw [black,fill] (-4,0-1) circle [radius=0.1] node [black,below=4] {$X_{-2}$}; 
			\draw [black,fill] (-2,0-1) circle [radius=0.1] node [black,below=4] {$X_{-1}$}; 
			\draw [black,fill] (1,0-1) circle [radius=0.1] node [black,below=4] {$X_{0}$}; 
			\draw [black,fill] (4,0-1) circle [radius=0.1] node [black,below=4] {$X_{1}$}; 

			\draw[black, thick] (-4.3,-2) -- (4.3,-2) node[black,right] {$S_{1}\circ\Theta_{1}\vec{X}$};  
			\draw [black,fill] (-0.1,0.1-2) rectangle (0.1,-0.1-2) node [black,below=4] {Tagged}; 
			
			\foreach \x in {-4,...,4}
			\draw (-\x,0-2) circle (0.1);
			\draw [black,fill] (-4,0-2) circle [radius=0.1] node [black,below=4] {$X_{-3}$}; 
			\draw [black,fill] (-2,0-2) circle [radius=0.1] node [black,below=4] {$X_{-2}$}; 
			\draw [black,fill] (1,0-2) circle [radius=0.1] node [black,below=4] {$X_{-1}$}; 
			\draw [black,fill] (4,0-2) circle [radius=0.1] node [black,below=4] {$X_{0}$}; 

			\end{tikzpicture}
		\end{center}
		\caption{Tagged Particle Jumps 1 Unit with Labels Shifted}
		\label{Figure 4}
	\end{figure}
	
	On the other hand, when $q(\cdot)$ is nearest-neighbor, jumps of tagged particle do not move particles between positive and negative axes, \textcolor{black}{but they may shift labels.} See Figure \ref{Figure 4}. \textcolor{black}{By a decomposition similar to those in \eqref{eq: Nt} and \eqref{eq: Dt}, we can see that the change in the label of the right-most particle on negative axis by time t comes from three sources: jumps of red particles through bond $(-1,1)$ ($N_{\vec{X}}(t)$), right jumps of the tagged particle ($r_{\vec{X}}(t)$) and left jumps of the tagged particle ($l_{\vec{X}}(t)$). We obtain}
	\begin{equation} \label{eq:change in index_x}
	F(\vec{X}_0)- F(\vec{X}_t) = N_{\vec{X}}(t)+\textcolor{black}{r}_{\vec{X}}(t), 
	\end{equation}
	\textcolor{black}{where $N_{\vec{X}}(t),{r}_{\vec{X}}(t)$  are the same as $N_t, r_t$ for the auxiliary process $\vec{X}_t$. We also obtain two similar identities for processes $\vec{Y}_t, \vec{Z}_t$},
	\begin{align}
	F(\vec{Y}_0)- F(\vec{Y}_t) &= N_{\vec{Y}}(t), \label{eq:change in index_y}\\
	F(\vec{Z}_0)- F(\vec{Z}_t) &= N_{\vec{Z}}(t)-\textcolor{black}{l}_{\vec{Z}}(t),\label{eq:change in index_z}
	\end{align} 
	\textcolor{black}{where $l_{\vec{Z}}(t)$ is the same as $l_t$ for process $\vec{Z}(t)$.}
	
\textcolor{black}{Taking expectations on \eqref{ineq:index change1} rewritten in terms of \eqref{eq:change in index_x} and  \eqref{eq:change in index_y}}, we get,
	\begin{equation}
	\mathbb{E}^{\nu_0,q}\left[N_t\right]-\mathbb{E}^{\nu_0,0}\left[N_t\right] \geq -\mathbb{E}^{\nu_0,q}\left[r_t\right]; \label{est:lower bdd for current difference}
	\end{equation}
	\textcolor{black}{taking expectations on \eqref{ineq:index change2} rewritten in terms of \eqref{eq:change in index_y} and  \eqref{eq:change in index_z}, we get}
	\begin{equation}
	\mathbb{E}^{\nu_0,q}\left[N_t\right]-\mathbb{E}^{\nu_0,0}\left[N_t\right] \leq \mathbb{E}^{\nu_0,q}\left[l_t\right]. \label{est:upper bdd for current difference}
	\end{equation}
	\eqref{est:lower bdd for current difference} and \eqref{est:upper bdd for current difference}  are sufficient for \eqref{eq:error estimate}.
\end{proof}

\begin{remark}\label{rm: similar proofs}
	We can obtain further results with similar proofs of Theorem \ref{thm:replacing estimate}. We mention them without detailed proofs.
	\begin{enumerate}
		\item There is a similar estimate when $q(\cdot)$ is non-nearest-neighbor. We only need to add some terms which are multiples of  $ \mathbb{E}^{\nu_0,q}\left[r_t\right]$ and $ \mathbb{E}^{\nu_0,q}\left[l_t\right]$ to the right hand sides of \eqref{est:upper bdd for current difference} and \eqref{est:lower bdd for current difference}. We can find a $C_{R'}$ depending on the range $R'$ of $q(\cdot)$ such that,
		\[\abs{\mathbb{E}^{\nu_0,q}\left[N_t\right]-\mathbb{E}^{\nu_0,0}\left[N_t\right]} \leq C_{R'}\textcolor{black}{\sum_z q(z)\cdot t.}\]
		\item From the coupling, we can use Kingman's Subadditive Ergodic Theorem to show the convergence of $\frac{N_t}{t}$ when the initial measure is the step measure $\mu_{1,0}$, and the tagged particle does not move, $q =0$:
		\[\lim_{t\to \infty} \frac{N_t}{t} = \lim_{t\to \infty} \frac{1}{t}\mathbb{E}^{\mu_{1,0},0}\left[N_t\right],\quad \mathbb{P}^{\mu_{1,0},0}-a.s. \]\textcolor{black}{See Remark \ref{rm: hydro and baha} and \cite{Ba}}.
		
		\item From the coupling and \textcolor{black}{Remark \ref{rm: similar proofs}.2, we can get a lower bound similar to \eqref{est: lowerbdd for displacement}} for the displacement of a tagged particle, when the initial meausre is the step measure $\mu_{1,0}$, and $q(\cdot)$ is nearest-neighbor,
		\[ \liminf_{t\to \infty} \frac{D_t}{t} \geq \frac{q(1)}{p(2)}(C_1 -q(-1)) -(q(-1)-q(1)),\quad \mathbb{P}^{\mu_{1,0},q}-a.s.   \]
		where $C_1$ is from  \textcolor{black}{Remark\ref{rm: similar proofs}.2},
		\[C_1=\lim_{t\to \infty} \frac{1}{t}\mathbb{E}^{\mu_{1,0},0}\left[N_t\right].\]
		
	\end{enumerate}
\end{remark}

\section{Current in AEP with a Blockage} \label{sec: Current Zero}
In this section, we will show the current in AEP with a blockage at the origin has a positive lower bound \textcolor{black}{(Theorem \ref{thm:positive current when it is not fully bocked}).} \textcolor{black}{The existence of a positive lower bound helps us to show that the tagged particle has a positive speed under $\mathbb{P}^{\nu_e,q},$ for some small $q(\cdot)$ and some ergodic measure $\nu_e$.}
\subsection{Currents and Densities in Equilibrium}
\textcolor{black}{In Sections \ref{sec: Current Zero}, \ref{sec: ASEP on the half line with destruction}}, we make the following assumptions on $p(\cdot,\cdot)$. Let $p(\cdot,\cdot)$ be jump rates for a continuous-time random walk on $\mathbb{Z}$ with the following conditions:
\begin{enumerate} 
	\item $p(\cdot,\cdot) $ is translational invariant: $p(x,y) = p(y-x)$.
	\item $p(x,x+k)=p(k)\geq p(-k) = p(x+k,x)$ for all \textcolor{black}{$k>0$}, and a strict inequality holds for some k.
	\item $p(\cdot,\cdot) $ has a finite jump range $R>1$:  $p(k)=0, \abs{k} > R$. Assume further $p(R) >0$.
\end{enumerate}
\textcolor{black}{
We don't need \ref{Asmp:attractiveness2}, which is the main condition for the existence of couplings in Section \ref{sec:coupling}; instead, the second condition above enables us to construct an increasing sequence $G_i$, which will be important in the the proof of Lemma \ref{lm: current 0 and asymptotic density}.}

We will consider a process, the AEP on lattice $\mathbb{Z}$ with 
a blockage at the origin, \textcolor{black}{i.e. the AEP with a tagged particle when $q=0$,} and quantities $C_{x,y}$ that are currents \textcolor{black}{through} bond \textcolor{black}{$(x,y)$}. 

The AEP on lattice $\mathbb{Z}$ with 
a blockage at the origin has a generator $L$ \textcolor{black}{defined by its action on a local function $f$},
\begin{equation}\label{eq: generator for Blocked ASEP}
\mathit{L}f(\eta) 
=\sum_{x,y\neq 0}p(x,y)\eta_x\left( 1-\eta_y\right)\left(f(\eta^{x,y})-f(\eta)\right), 
\end{equation}
\textcolor{black}{which is the same as \eqref{Eq:generator} when $q=0$.}
Assume the initial configuration is the step measure $\mu_{1,0}$ for the rest of this section. The current $C_{i,j}$ \textcolor{black}{through} bond $(i,j)$ is defined as:

\textcolor{black}{
\begin{equation} \label{eq:current}
C_{x,y}(\eta) = \sum_{\substack{i\leq x, y\leq j,\\ i,j\neq 0}}p(i,j)\eta_i(1-\eta_j) - p(j,i) \eta_j(1-\eta_i).
\end{equation}
}

Theorem \ref{thm:positive current when it is not fully bocked} is the main result for the next two sections. Before its \textcolor{black}{statement and proof}, we shall see three lemmas on invariant measures with respect to $\mathit{L}$, and currents $C_{i,j}$. \textcolor{black}{The first two lemmas are direct consequences of translation invariance and finite range of $p(\cdot,\cdot)$ and they are standard. In the third lemma, we will need the second condition on $p(\cdot,\cdot)$.} The first lemma says the mean of current $C_{x,x+1}$ is constant in $x$ with respect to an invariant measure.
\begin{lemma}\label{lm: constant current for invariant measure}
	For an invariant measure $\bar{\nu}$ with respect to the generator $L$ defined in \eqref{eq: generator for Blocked ASEP}, we have, for any $x \neq -1, 0$,
	\begin{equation} \label{eq: constant current for invariant measure}
	\langle\bar{\nu},C_{x,x+1}\rangle =\langle\bar{\nu},C_{-1,1}\rangle
	\end{equation}
\end{lemma}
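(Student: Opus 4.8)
The plan is to derive a microscopic conservation law that identifies the difference of the currents across two adjacent nearest-neighbor cuts with the action of $L$ on a single occupation variable, and then to use invariance to annihilate that action and telescope across both half-lines.

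First I would apply the generator $L$ from \eqref{eq: generator for Blocked ASEP} to the local function $\eta_x$ for a fixed $x\neq 0$. Only the exchanges involving the site $x$ contribute; collecting the jumps into and out of $x$ gives
\[
L\eta_x(\eta)=\sum_{a\neq 0}\bigl[\,p(a,x)\eta_a(1-\eta_x)-p(x,a)\eta_x(1-\eta_a)\,\bigr],
\]
which is a finite sum since $p$ has finite range $R$, so $L\eta_x$ is again local. Splitting this sum according to whether $a\leq x-1$ or $a\geq x+1$ and comparing with the definition \eqref{eq:current}, I recognize the two pieces as the net flow into $x$ from the left and the net flow out of $x$ to the right. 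This yields the discrete continuity equation
\[
L\eta_x=C_{x-1,x}-C_{x,x+1},\qquad x\neq 0.
\]
Intuitively $C_{x-1,x}$ is the net rate at which particles enter the half-line $\{\,\cdot\geq x\,\}$ and $C_{x,x+1}$ the net rate of entry into $\{\,\cdot\geq x+1\,\}$, so their difference is the net accumulation at the single site $x$.

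Since $\bar\nu$ is invariant and $\eta_x$ is a bounded local function, $\langle\bar\nu,L\eta_x\rangle=0$, and the identity immediately gives $\langle\bar\nu,C_{x-1,x}\rangle=\langle\bar\nu,C_{x,x+1}\rangle$ for every $x\neq 0$. It remains to bridge the two half-lines across the blockage. For this I would use that the constraint $x,y\neq 0$ in \eqref{eq:current} removes the origin from the summation, so that, as functions on the configuration space,
\[
C_{0,1}=C_{-1,1}=C_{-1,0}.
\]
Telescoping $\langle\bar\nu,C_{x-1,x}\rangle=\langle\bar\nu,C_{x,x+1}\rangle$ upward for $x=1,2,\dots$ gives $\langle\bar\nu,C_{x,x+1}\rangle=\langle\bar\nu,C_{0,1}\rangle=\langle\bar\nu,C_{-1,1}\rangle$ for all $x\geq 1$; applying the identity at $x=-1,-2,\dots$ and using $C_{-1,0}=C_{-1,1}$ gives the same value for all $x\leq -2$. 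Together these cover exactly the indices $x\neq -1,0$ claimed.

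The only delicate point, and the step I expect to be the crux, is the bookkeeping around the blockage: one must verify that the origin genuinely drops out of the current functionals, so that the bottleneck cut $C_{-1,1}$ is the common value, and that the continuity identity is never invoked at the forbidden index $x=0$, where site $0$ carries no dynamics. Everything else reduces to a routine rearrangement of finite sums, so I expect the derivation of $L\eta_x=C_{x-1,x}-C_{x,x+1}$ together with the reduction $C_{0,1}=C_{-1,1}=C_{-1,0}$ to carry the whole argument.
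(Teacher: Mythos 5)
Your proposal is correct and follows essentially the same route as the paper: compute $L\eta_x$, identify it with the difference of adjacent currents $C_{x-1,x}-C_{x,x+1}$, and use invariance of $\bar\nu$ to conclude the means are constant. The only cosmetic difference is that the paper writes out the boundary cases $x=\pm 1$ separately (obtaining $L\eta_{-1}=C_{-2,-1}-C_{-1,1}$ and $L\eta_{1}=C_{-1,1}-C_{1,2}$ directly), whereas you absorb them into the general identity via the observation $C_{0,1}=C_{-1,1}=C_{-1,0}$, which holds because the constraint $x,y\neq 0$ in \eqref{eq:current} removes the origin from the sums.
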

\begin{proof} The change of density at site $x$ is due to the difference between currents \textcolor{black}{through} bonds $(x-1,x)$ and $(x,x+1)$. 
	Computing $\mathit{L} \eta_x$ for $x\neq -1,0,1$, we get
	\begin{align*}
	\mathit{L} \eta_x &= C_{x-1,x}- C_{x,x+1}, \\
	\mathit{L} \eta_{-1} &= C_{-2,-1}-C_{-1,1}, \\
	\mathit{L} \eta_{1} &= C_{-1,1}-C_{1,2}.
	\end{align*}
	We show the first one, and the rest two are similar:
	\begin{align*}
	\mathit{L} \eta_x =& \sum_{i,j\neq 0}p(i,j)\eta_i\left( 1-\eta_j
	\right)\left(\eta^{i,j}_x-\eta_x\right)\\= & \sum_{i\neq 0,x}p(i,x)\eta_i\left( 1-\eta_x\right) - \sum_{j\neq 0,x}p(x,j)\eta_x\left( 1-\eta_j\right)  \\
	=& \sum_{i\neq 0,x}\left\lbrace p(i,x)\eta_i\left( 1-\eta_x\right) - p(x,i)\eta_x\left( 1-\eta_i\right)\right\rbrace	
	\end{align*}
	On the other hand,
	\begin{align*}
	C_{x-1,x} -C_{x,x+1} =& \left(\sum_{\substack{i\leq x-1, x\leq j,\\ i,j\neq 0}}-\sum_{\substack{i\leq x, x+1\leq j,\\ i,j\neq 0}}\right)  p(i,j)\eta_i(1-\eta_j) - p(j,i) \eta_j(1-\eta_i) \\
	=& 	\left(\sum_{\substack{i\leq x-1, x= j,\\ i,j\neq 0}} - \sum_{\substack{i= x, x+1\leq j,\\ i,j\neq 0}} \right) p(i,j)\eta_i(1-\eta_j) - p(j,i) \eta_j(1-\eta_i) \\
	=& \textcolor{black}{	\left(\sum_{\substack{i\leq x-1,\\ i\neq 0}} + \sum_{\substack{ x+1\leq i,\\ i\neq 0}} \right)p(i,x)\eta_i(1-\eta_x) - p(x,i) \eta_x(1-\eta_i)	= L\eta_x.}
	\end{align*}
	The third line is because interchanging $i$ and $j$ results in a change of sign. 
	
	Taking expectation with respect to $\bar{\nu}$, we get \eqref{eq: constant current for invariant measure}.
\end{proof}

Consider translation operators $\tau_i$ on the state space $\mathbb{X'}= \{0,1\}^{\mathbb{Z}}$ , for $i,j\in\mathbb{Z}$,
\[(\tau_i\eta)(j)=\eta(j+i).\]
We define translations on local functions $f$ and on measures $\nu$ by
\begin{equation} 
\tau_i f(\eta) = f(\tau_i \eta),\label{eq:translation of function}
\end{equation}
\begin{equation}
\left<\tau_i\nu,f\right>=\left<\nu,\tau_i f\right>\label{eq:translation of measure}
\end{equation}
Particularly, we see $
\tau_i \eta_j =\eta_{i+j}, \left<\tau_i\nu,\eta_j\right> = \left<\nu,\eta_{i+j}\right>.
$

The second lemma says that any weak limit $\nu^*$ of the Ces\`{a}ro means of $\bar{\nu}$ under translation is a mixture of Bernoulli measures $\mu_\rho$, $0\leq \rho \leq 1$. \textcolor{black}{This is because $\nu^*$ is translation invariant and invariant with respect to the generator $\mathit{L}_0$ for AEP.}
\begin{lemma} \label{lm: Bernoulli measure}
	Let $\bar{\nu}$ be an invariant \textcolor{black}{measure} with respect to \textcolor{black}{the generator $L$}. For any weak limit $\nu^*$ of the Ces\`{a}ro means of $\bar{\nu}$ under translation:
	\begin{equation}\label{eq:Cesaro mean of nu}
	\nu^* = \lim_{k\to\infty}\nu^*_{n_k}=\lim_{k\to\infty} \frac{1}{n_k}\sum_{i=1}^{n_k}\tau_i\bar{\nu},
	\end{equation}
	it is \textcolor{black}{translation invariant} and invariant with respect to the generator $\mathit{L}_0$ for AEP. That is, for any local function $f$,
	\begin{align}
	\langle\nu^*,\tau_x f\rangle =&\langle \nu^*,f\rangle, \label{eq:translational invariant}
	\\
	\langle\nu^*,\mathit{L}_0 f \rangle=&0. \label{eq:invariant wrt L0}
	\end{align}
	where $L_0$ is translational invariant, and it acts on $f$ by 
	\begin{equation}\label{eq:generator for AEP}
	\mathit{L}_0 f (\eta) =\sum_{x,y\in \mathbb{Z}}p(y-x)\eta_x\left( 1-\eta_y\right)\left(f(\eta^{x,y})-f(\eta)\right)
	\end{equation}
	Particularly, there is a measure $w_{\rho}$ on $[0,1]$, such that 
	\begin{equation}\label{eq:T and I measure}
	\nu^* = \int \mu_\rho dw_{\rho}
	\end{equation} 
\end{lemma}
\begin{proof}
	\textcolor{black}{By Theorem VIII.3.9 \cite{Li85}, we only need to show translation invariance and invariance (\eqref{eq:translational invariant}, \eqref{eq:invariant wrt L0})} to get \eqref{eq:T and I measure}. The proofs for both are similar.
	
	For any local function $f$, \textcolor{black}{which is a bounded function on $\{0,1\}^\mathbb{Z}$ depending on finitely many $\xi_x$},
	\begin{align*}
	\langle\nu_{n_k}^*,\tau_1 f\rangle=& \frac{1}{n_k}\sum_{i=1}^{n_k}\langle\tau_i\bar{\nu},\tau_1 f\rangle\\
	=&\frac{1}{n_k}\sum_{i=1}^{n_k}\langle\tau_{i+1}\bar{\nu},f\rangle\\ 
	=& \langle\nu_{n_k}^*,f\rangle + O_f\left(\frac{1}{n_k}\right)
	\end{align*}
	
	Also, as $\bar{\nu}$ is invariant with respect to $\mathit{L}$ and $\mathit{L_0}\tau_i = \tau_i\mathit{L_0}$ ,we can compare \textcolor{black}{\eqref{eq: generator for Blocked ASEP}} with \eqref{eq:generator for AEP} and get,
	\begin{align*}
	\langle\nu_{n_k}^*,\mathit{L}_0 f\rangle=& \frac{1}{n_k}\sum_{i=1}^{n_k}\langle\tau_i\bar{\nu},\mathit{L}_0 f\rangle\\
	=&\frac{1}{n_k}\sum_{i=1}^{n_k}\langle\bar{\nu},\mathit{L}_0 (\tau_i f)\rangle \\	
	=&\frac{1}{n_k}\sum_{i=1}^{n_k}\langle\bar{\nu},\mathit{L}(\tau_i f)\rangle +\frac{1}{n_k}\sum_{i=1}^{n_k}\langle\bar{\nu},(\mathit{L}_0-\mathit{L})(\tau_i f)\rangle\\
	=& O_f\left(\frac{1}{n_k}\right).
	\end{align*}
	\textcolor{black}{In the last line, since $f$ is local, $(\mathit{L}_0-\mathit{L})(\tau_i f)$ is non-zero for finitely many $i$.}	Taking limits as $n_k \to \infty$, we get
	\eqref{eq:translational invariant} and \eqref{eq:T and I measure}.
\end{proof}


The third lemma says if an invariant measure $\bar{\nu}$ has a current with a zero mean and some weak limit $\nu^*$ of its Ces\`{a}ro means under translation is a Bernoulli measure $\mu_0$ with density 0, the densities of positive sites are identically $0$ for $\bar{v}$.
\begin{lemma}\label{lm: current 0 and asymptotic density}
	Let $\bar{v}$ be an invariant measure with respect to the generator $\mathit{L}$, and $\nu^*$ be a weak limit of of its Ces\`{a}ro means defined in \eqref{eq:Cesaro mean of nu}. If  $\langle\bar{\nu},C_{-1,1}\rangle=0$ and $\langle\nu^*,\eta_x\rangle =0$ for some $x$ (which implies for all $x$ since $\nu^*$ is \textcolor{black}{tranlation} invariant), we have $\langle\bar{\nu},\eta_x\rangle = 0$ for all $x>0$.
\end{lemma}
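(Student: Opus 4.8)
Throughout, write $\rho_x=\langle\bar\nu,\eta_x\rangle\in[0,1]$ for the one-site densities and $S_N=\sum_{j=1}^N\rho_j$ (so $S_0=0$ and $S$ is non-decreasing). The plan is to convert the two hypotheses into, respectively, a sublinear-growth bound for $S$ and a discrete subharmonicity for $S$, and then to run a maximum-principle/averaging argument. First I would record what each hypothesis gives. By Lemma \ref{lm: Bernoulli measure}, $\nu^*$ is translation invariant, so $\langle\nu^*,\eta_x\rangle=0$ for every $x$; evaluating at $x=0$ against the defining Ces\`aro subsequence gives
\[ 0=\langle\nu^*,\eta_0\rangle=\lim_{k\to\infty}\frac{1}{n_k}\sum_{i=1}^{n_k}\langle\bar\nu,\eta_i\rangle=\lim_{k\to\infty}\frac{S_{n_k}}{n_k}, \]
so $S$ grows sublinearly along $n_k$. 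Next, from $\langle\bar\nu,C_{-1,1}\rangle=0$ together with Lemma \ref{lm: constant current for invariant measure} I obtain $\langle\bar\nu,C_{x,x+1}\rangle=0$ for all $x\ge 1$.

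The next step turns the vanishing bulk currents into subharmonicity of $S$. For $x\ge R$ every site entering $C_{x,x+1}$ lies in $\{1,2,\dots\}$, so the origin is irrelevant and I may expand the mean current in translation-invariant form. Writing each microscopic contribution as $(p(k)-p(-k))\langle\eta_{i+k}(1-\eta_i)\rangle+p(k)(\rho_i-\rho_{i+k})$ and summing over the jumps crossing $(x,x+1)$, the first group of terms is $\ge 0$ because $p(k)\ge p(-k)$, while the second group telescopes to $-\mathcal{A}S(x)$, where
\[ \mathcal{A}S(x):=\sum_{k=1}^{R}p(k)\big(S_{x+k}+S_{x-k}-2S_x\big). \]
Hence $0=\langle\bar\nu,C_{x,x+1}\rangle\ge-\mathcal{A}S(x)$, that is, $\mathcal{A}S(x)\ge 0$ for all $x\ge R$.

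The crux is to deduce that $S$ is eventually constant. I would introduce the discrete flux $\phi_x=\sum_{k=1}^{R}p(k)\sum_{j=1}^{k}\big(S_{x+j}-S_{x+j-k}\big)$, which is non-negative (as $S$ is non-decreasing) and satisfies $\phi_x-\phi_{x-1}=\mathcal{A}S(x)$, so $\phi$ is non-decreasing on $\{x\ge R-1\}$. On one hand each window-difference $S_{x+j}-S_{x+j-k}$ is at most the $\rho$-mass in an interval of length $2R$ about $x$, giving $\frac1N\sum_{x\le N}\phi_x\le\kappa\,S_{N+R}/N$ for a constant $\kappa$, which tends to $0$ along $n_k$. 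On the other hand, if $\phi_{x_1}>0$ for some $x_1$, monotonicity forces $\frac1N\sum_{x\le N}\phi_x\ge\frac{N-x_1}{N}\phi_{x_1}\to\phi_{x_1}>0$. These are incompatible, so $\phi\equiv 0$ on $\{x\ge R-1\}$. Since $p(R)>0$ and the summands of $\phi_x$ are non-negative, the $k=R$ term yields $S_{x+R}=S_x$, and monotonicity then forces $S$ to be constant on $\{x\ge R-1\}$; equivalently $\rho_x=0$, i.e. $\eta_x=0$ $\bar\nu$-a.s., for every $x\ge R$.

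Finally I would clear the boundary layer $1\le x\le R-1$. Knowing $\eta_m=0$ a.s. for $m\ge R$, the current across $(R-1,R)$ has no left-moving contribution (there are no particles to its right) and the right-hand empty-target factors equal $1$, so $C_{R-1,R}$ reduces a.s. to $\sum_{k}\sum_{i}p(k)\,\eta_i$ over the crossing pairs with $i+k\ge R$ --- a sum of non-negative terms whose mean vanishes; the $k=R$ contribution, with $p(R)>0$, forces $\rho_i=0$ for $1\le i\le R-1$. Combining the two ranges gives $\langle\bar\nu,\eta_x\rangle=0$ for all $x>0$. I expect the real obstacle to be the third step, extracting ``$S$ constant'' from subharmonicity: for a multi-range kernel, subharmonicity of $S$ does not give monotonicity of the increments $\rho_x$ directly (as it would in the nearest-neighbour case), so the argument must pass through the conserved flux $\phi$ and a Ces\`aro-averaging contradiction rather than a naive comparison of consecutive densities; the boundary step and the current bookkeeping are routine by comparison.
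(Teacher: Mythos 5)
Your proposal is correct and follows essentially the same route as the paper: the splitting of the vanishing mean current into a non-negative asymmetric part plus a telescoping symmetric part, the resulting monotone non-negative potential (your flux $\phi_x$ agrees with the paper's $G_{i}$ up to an additive constant and indexing), and the Ces\`aro-average-versus-monotonicity argument forcing that potential to vanish are all identical in substance to the paper's three steps. The only cosmetic difference is that the paper's $G_i$ is a symmetric positive combination of the densities themselves, so $G\equiv 0$ gives $\langle\bar\nu,\eta_x\rangle=0$ for all $x\geq 1$ at once, whereas your $\phi$ is built from differences of partial sums $S$, which is why you need the extra (correct, but avoidable) boundary-layer step via $C_{R-1,R}$.
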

\begin{proof} We will divide the proof into 3 steps.
	\begin{enumerate}[label = S\arabic*.]
		\item Define a quantity \textcolor{black}{$G_i$}:	
		
		With identities $p(x,y) = p(y,x) + p(x,y)-p(y,x)$ and $\eta_x(1-\eta_y) -\eta_y(1-\eta_x) =\eta_x-\eta_y$, from \eqref{eq:current}, we get
		\begin{align*}
		\langle\bar{\nu},C_{i,i+1}\rangle
		=& \langle\bar{\nu},\sum_{x\leq i, i+1\leq y} p(y-x)(\eta_x-\eta_y)\rangle\\
		+& \langle\bar{\nu},\sum_{x\leq i, i+1\leq y} \textcolor{black}{ (p(y-x)-p(x-y)) \eta_y(1-\eta_x)}\rangle				
		\end{align*}
		Therefore, 	by Lemma \ref{lm: constant current for invariant measure}, we have, for \textcolor{black}{ $i \geq R$}, $\langle\bar{\nu},C_{i,i+1}\rangle =0$, and
		\begin{align}
		&\sum_{x\leq i, i+1\leq y} p(y-x)\langle\bar{\nu},\eta_y-\eta_x\rangle \notag \\ =& \sum_{x\leq i, i+1\leq y}  \textcolor{black}{(p(y-x)-p(x-y)) \langle\bar{\nu},\eta_y(1-\eta_x)}\rangle \label{eq:RHS} \end{align}
		\textcolor{black}{The choice for $i\geq R$ is to avoid $xy =0$. There is symmetry in the left hand side of \eqref{eq:RHS}, there are $2R$ terms with "odd" coefficients, $b_{-(j-1)} =-b_j, j =1,\dots,R$. We can write the sum as a difference of two shifted sums of $2R-1$ terms with even coefficients, $a_{-j}=a_j$, $j= 0,1,\dots,R-1$,
		\begin{align} 
	  \sum_{j=-(R-1)}^R b_j  \langle\bar{\nu},\eta_{i+j}\rangle
		=& \sum_{j=-(R-1)}^{R-1} a_j \langle\bar{\nu},\eta_{i+1+j}\rangle	- 	\sum_{j=-(R-1)}^{R-1} a_j \langle\bar{\nu},\eta_{i+j}\rangle \notag \\
		=:& G_{i+1}-G_i \label{eq:LHS}
		\end{align}
			} 
		where $G_i$ is defined for $i \geq R$ as
		\textcolor{black}{
		\begin{align}
		G_i :=& \sum_{j:\abs{j}\leq R-1} \sum_{k=\abs{j}+1}^{R} \left(k-\abs{j}\right)p(k)  \langle\bar{\nu},\eta_{i+j}\rangle  \notag \\
		=& A v_i, \label{eq:A >0,v>=0}
		\end{align}
	}
		and $A$ is a row vector with $2R-1$ positive entries \textcolor{black}{$a_j=\sum_{k=\abs{j}+1}^R\left(k-\abs{j}\right)p(k)$, for $\abs{j}\leq R-1 $}, and $v_i$ is a column vector with \textcolor{black}{$2R-1$ nonnegative entries $\langle\bar{\nu},\eta_{i+j}\rangle,$ for $\abs{j}\leq R-1$}.
		
		\item Convergence of $(G_i)_{i\geq R}$:
		
		\textcolor{black}{By the assumption $p(k)\geq p(-k)$ for $k>0$, we have the right hand side of \eqref{eq:RHS} is positive. Also, \eqref{eq:A >0,v>=0} implies that $G_i$ is bounded uniformly for $i\geq R$. Therefore, we get the convergence of $(G_i)_{i\geq R}$:
		\begin{equation} \label{eq:convergence of G_i}
		G_i \uparrow c, \text{ as } i \uparrow \infty.
		\end{equation}}
	
		\item From $\langle\nu^*,\eta_x\rangle=0$ to $\langle \bar{\nu},\eta_x\rangle=0$:
		
		\textcolor{black}{As the Ces\'{a}ro limit of a sequence is the same as its limit when both limits exist, by the definition \eqref{eq:Cesaro mean of nu} of $\nu^*$, \eqref{eq:A >0,v>=0}, and \eqref{eq:convergence of G_i}, we get $c=0$ from linearity. With strictly positive entries in $A$, we get, for $i\geq R$,}
		\[G_i = A v_i = 0,\]
		\textcolor{black}{and all entries in $v_i$ are $0$}. Particularly, $\langle\bar{\nu},\eta_{i+j}\rangle =0 $, for all indices $i+j$ with $i+j \geq R-(R-1) =1$.
	\end{enumerate} \end{proof}
	
	\textcolor{black}{We should notice that to write $G_i$ in forms of \eqref{eq:A >0,v>=0}, we need $i\geq R$. It is because we don't want terms involving $p(0,x)$ or $p(x,0)$. This condition holds for sites sufficiently right to the origin. We will see similar conditions in Theorem \ref{thm: right asymptotic density is 0} and Lemma \ref{lm: correpsondence} involved.}
	
	\subsection{Proof of Positive Currents in AEP with a Blockage}
	The coming theorem will be proved in Section \ref{sec: proof of u1/2 dominance}. It says, if the initial configuration has no particles after some point $x>0$, $\nu^*$ is dominated by $\mu_{\frac{1}{2}}$, \textcolor{black}{in the sense of \eqref{eq: zero right asymptotic density}.} \textcolor{black}{Let's recall from Section \ref{sec: Invariant Measure and Lower Bound} that the mean of empirical measures $\nu_{t}$ is defined by its action on local functions $\langle\nu_{t},f\rangle = \frac{1}{t}\mathbb{E}^{\nu_0,0}\left[\int_{0}^{t}f(\eta_s)\,ds\right]$} for some initial measure $\nu_0$.
	\begin{theorem}\label{thm: right asymptotic density is 0}
		Consider the AEP on lattice $\mathbb{Z}$ with 
		a blockage at the origin and $p(\cdot)$ has a positive mean $\sum z\cdot p(z)>0$. Let $\bar{\nu}$ be a weak limit of the mean of empirical measures $\bar{\nu}_{T_n}$, and $\nu^*$ \textcolor{black}{be defined via a subsequence mentioned in \eqref{eq:Cesaro mean of nu}}. \textcolor{black}{If there is an $x>R$ such that $\langle\nu_0,\eta_y\rangle =0$ for all $y \geq x$, we will have, for any finite set $A \subset \mathbb{Z}$},
		\begin{equation} \label{eq: zero right asymptotic density}
		\langle{\nu}^*,\prod_{x\in A}\eta_x\rangle \leq  \langle\mu_{\frac{1}{2}},\prod_{x\in A}\eta_x\rangle =2^{-\abs{A}}.
		\end{equation}
	\end{theorem}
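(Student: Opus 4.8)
The plan is to prove the stronger statement that $\nu^*$ is stochastically dominated by the Bernoulli measure $\mu_{1/2}$. Since $\eta\mapsto\eta_A=\prod_{x\in A}\eta_x$ is an increasing local function, such a domination immediately gives $\langle\nu^*,\eta_A\rangle\le\langle\mu_{1/2},\eta_A\rangle=2^{-|A|}$. It is worth noting at the outset that a bound on the one–site density will not be enough: by Lemma \ref{lm: Bernoulli measure} we have $\nu^*=\int\mu_\rho\,dw_\rho$, so $\langle\nu^*,\eta_A\rangle=\int\rho^{|A|}\,dw_\rho$ depends only on $|A|$, and knowing $\int\rho\,dw_\rho\le\frac12$ does not control $\int\rho^{|A|}\,dw_\rho$ (e.g. $w_\rho$ split between $0$ and $1$). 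This is why a genuine stochastic-domination argument, rather than a density estimate, is required.

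First I would reduce the assertion about $\nu^*$ to a tail estimate for $\bar\nu$ on the positive axis. Because $2^{-|A|}=\langle\mu_{1/2},\eta_A\rangle$ is translation invariant while the Cesàro average only sees the tail of the sequence $i\mapsto\langle\tau_i\bar\nu,\eta_A\rangle=\langle\bar\nu,\eta_{A+i}\rangle$, it suffices to prove
\[
\limsup_{i\to\infty}\langle\bar\nu,\eta_{A+i}\rangle\le 2^{-|A|}
\]
for every finite $A\subset\mathbb{Z}$. Indeed, the finitely many boundary terms do not affect $\langle\nu^*,\eta_A\rangle=\lim_k\frac{1}{n_k}\sum_{i=1}^{n_k}\langle\bar\nu,\eta_{A+i}\rangle$, and a Cesàro mean inherits the $\limsup$ bound of its summands.

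Next I would restrict the blocked ASEP $\eta_t$ to the positive sites $\{1,2,\dots\}$. Since the blockage forbids the use of site $0$ but range-$R$ jumps may still pass over it, the positive marginal evolves as an ASEP on the half line together with creation and destruction at the boundary sites $1,\dots,R-1$: a particle is created there when a particle on the negative side jumps across the blockage, and destroyed there when a positive particle jumps out onto an empty negative site. These boundary rates are modulated by the negative-side configuration, which we do not control, so the marginal is not autonomous. To get an upper bound I would dominate it by the worst case for the positive axis, namely the half-line process in which the negative boundary region is frozen entirely occupied: this simultaneously maximizes creation and turns off destruction. By the monotonicity of the exclusion dynamics under the basic coupling, the positive marginal of $\eta_t$ is then stochastically dominated by this maximal-creation half-line process, whence $\langle\bar\nu,\eta_{A+i}\rangle\le\langle\hat\nu,\eta_{A+i}\rangle$, where $\hat\nu$ is its invariant measure. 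The precise description of this half-line process and the coupling realizing the domination are the content of Section \ref{sec: ASEP on the half line with destruction}.

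The main obstacle is the analysis of the dominating half-line process and the identification of its bulk density as $\tfrac12$. Creation from the (density-one) frozen reservoir together with the positive drift $\sum z\,p(z)>0$ places the system in the maximal-current regime, so I expect
\[
\limsup_{i\to\infty}\langle\hat\nu,\eta_{A+i}\rangle = 2^{-|A|},
\]
i.e. $\hat\nu\to\mu_{1/2}$ far from the boundary. This is the heart of the matter and is where the second coupling argument enters: one compares the half-line process with the translation-invariant ASEP on $\mathbb{Z}$ for which $\mu_{1/2}$ is invariant, and shows that the excess injected at the boundary is carried off and cannot raise the far-field correlations above those of $\mu_{1/2}$. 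Controlling this boundary layer — so that the comparison survives the passage $i\to\infty$, despite the vanishing characteristic speed at density $\tfrac12$ — is the delicate step. Granting it, combining the two displays gives $\limsup_{i\to\infty}\langle\bar\nu,\eta_{A+i}\rangle\le 2^{-|A|}$, and passing to the Cesàro limit yields $\langle\nu^*,\eta_A\rangle\le 2^{-|A|}$, as claimed.
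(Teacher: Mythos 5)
Your overall architecture is the same as the paper's: reduce \eqref{eq: zero right asymptotic density} to a far-field correlation bound for $\bar{\nu}$, dominate the positive-axis marginal of the blocked process by the half-line ASEP with maximal creation (the paper realizes this domination through a three-class-particle lemma yielding \eqref{eq:comparison}, with a shift by the range $R$; your route via attractiveness and a frozen, fully occupied reservoir is a legitimate and arguably cleaner substitute), and then identify the far field of that half-line process as $\mu_{\frac{1}{2}}$. Your opening observation --- that a one-site density bound cannot suffice because $\nu^*$ is a mixture of Bernoulli measures which could charge $\rho=1$ --- is exactly the right diagnosis, and is precisely why the paper proves the full product estimate for every finite $A$.

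The genuine gap is your third step, which you yourself flag with ``I expect'' and ``Granting it'': the identification of the far field of the dominating half-line process with $\mu_{\frac{1}{2}}$ is the entire mathematical content of the theorem, and your proposed mechanism (``the excess injected at the boundary is carried off'') is a hydrodynamic heuristic, not an argument. In particular it does not address the very danger you identified: the far-field limit of the half-line process is again a mixture of Bernoulli measures, and nothing in a pure upper-bound coupling prevents it from having an atom at $\rho=1$. The paper closes this with a current sandwich built on Liggett's ergodic theorems \cite{Li75}, \cite{Li77} (Lemma \ref{lm: correspondence btw Li 77} and Theorem \ref{thm:main convergence for Q process}): the limits $m(\lambda,\rho)$ exist and are monotone in the reservoir densities; being translation invariant and invariant for the full-line ASEP, they are mixtures of Bernoulli measures (Lemma \ref{lm: Bernoulli measure}), hence carry current at most $w/4$ with $w=\sum_k k\,p(k)$, with equality only for $\mu_{\frac{1}{2}}$; and Liggett's bound \eqref{eq: lower bound for current2} gives current at least $w\max\{\lambda(1-\lambda),\rho(1-\rho)\}$. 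Note that for the boundary densities you actually face, $(\lambda,\rho)=(1,0)$, this lower bound degenerates to $0$ --- consistent with the dangerous mixture $w_1\mu_1+w_0\mu_0$, which carries zero current --- so even Liggett's estimate applied directly fails; one must pass through $(\tfrac{1}{2},0)$ and $(1,\tfrac{1}{2})$, where the bound equals $w/4$ and forces $m(\tfrac{1}{2},0)=m(1,\tfrac{1}{2})=\mu_{\frac{1}{2}}$, and then use monotonicity in $(\lambda,\rho)$ to sandwich $m(1,0)$ between them. Without this argument, or an equivalent substitute, your proof does not go through; everything preceding it is correct but preparatory.
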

	\begin{proof}
		\textcolor{black}{See Corollary \ref{cor: density estimate}}.
	\end{proof}
	
	Theorem \ref{thm:positive current when it is not fully bocked} is the main result of \textcolor{black}{Sections \ref{sec: Current Zero}, \ref{sec: ASEP on the half line with destruction}}. It says the current \textcolor{black}{through bond $(-1,1)$} is strictly positive for the AEP on $\mathbb{Z}$ when the initial measure is the step measure $\mu_{1,0}$. We will prove it by contradiction.
	\begin{theorem} \label{thm:positive current when it is not fully bocked}
		Suppose $p(\cdot,\cdot)$ satisfy assumptions at the beginning of this section. For the AEP on lattice $\mathbb{Z}$ with 
		a blockage at the origin, there is a lower bound  $C_1>0$ for the current \textcolor{black}{through bond $(-1,1)$}, 
		\begin{equation} \label{eq: lower bound for current}
		\liminf_{t\to \infty}\frac{1}{t}\mathbb{E}^{\mu_{1,0},0}\left[N_t\right]=\liminf_{t\to \infty}\left<\nu_t,C_{-1,1}\right>= C_1>0.\end{equation}
	\end{theorem}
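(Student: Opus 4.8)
The plan is to argue by contradiction. Write $C_1 = \liminf_{t\to\infty}\langle\nu_t, C_{-1,1}\rangle$ and suppose $C_1 = 0$. Using the martingale identity behind Lemma \ref{prop:Nt and Measure} (which gives $\langle\nu_t, C_{-1,1}\rangle = \frac1t\mathbb{E}^{\mu_{1,0},0}[N_t]$), I would choose $T_n\uparrow\infty$ realizing this liminf and, by precompactness of $\mathbf{M}_1$, pass to a further subsequence so that $\nu_{T_n}\to\bar\nu$ weakly. Then $\bar\nu$ is invariant (Theorem B7 of \cite{Li}) and, since $C_{-1,1}$ is local, $\langle\bar\nu, C_{-1,1}\rangle = \lim_n\langle\nu_{T_n},C_{-1,1}\rangle = 0$. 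By Lemma \ref{lm: constant current for invariant measure} the mean current then vanishes across every bond, $\langle\bar\nu, C_{x,x+1}\rangle = 0$ for all $x\neq -1,0$.

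Next I would pin down the translation-Cesàro limit $\nu^*$ of \eqref{eq:Cesaro mean of nu}. By Lemma \ref{lm: Bernoulli measure} it is a mixture $\int\mu_\rho\,dw_\rho$ as in \eqref{eq:T and I measure}. For $x\geq R$ the local function $C_{x,x+1}$ coincides with the $\tau_x$-translate of the current across $(0,1)$ for the unblocked generator $L_0$ (the constraint $i,j\neq 0$ becoming vacuous), so averaging $\langle\bar\nu,C_{x,x+1}\rangle=0$ over $x$ shows the $\nu^*$-current vanishes. A short computation gives the stationary ASEP current under $\mu_\rho$ as $\rho(1-\rho)\sum_z z\,p(z)$; since the mean $\sum_z z\,p(z)>0$ and $\rho(1-\rho)\geq 0$ with equality only at $\rho\in\{0,1\}$, the mixing measure $w_\rho$ is supported on $\{0,1\}$. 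Now writing $\langle\nu^*,\prod_{x\in A}\eta_x\rangle=\int\rho^{\abs{A}}\,dw_\rho$ and letting $\abs{A}\to\infty$ in the estimate \eqref{eq: zero right asymptotic density} of Theorem \ref{thm: right asymptotic density is 0} forces $w_\rho(\{1\})=0$. Hence $w_\rho=\delta_0$ and $\nu^*=\mu_0$, so $\langle\nu^*,\eta_x\rangle=0$.

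Both hypotheses of Lemma \ref{lm: current 0 and asymptotic density} now hold, giving $\langle\bar\nu,\eta_x\rangle=0$ for all $x>0$. Substituting $\eta_x=0$ ($x>0$, $\bar\nu$-a.s.) into the definition \eqref{eq:current} of the current across the bond $(-1,1)$ collapses the backward terms and yields
\[
0 = \langle\bar\nu,C_{-1,1}\rangle = \sum_{i=1-R}^{-1}\Big(\sum_{j\geq 1}p(j-i)\Big)\langle\bar\nu,\eta_i\rangle .
\]
Each coefficient is strictly positive (for $i=-1$ it equals $p(2)+\cdots+p(R)\geq p(R)>0$ since $R>1$), so $\langle\bar\nu,\eta_i\rangle=0$ for $i=-1,\dots,1-R$; in particular the density immediately to the left of the blockage vanishes.

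The contradiction, and the main obstacle, is to rule this out: the density just left of the origin should be strictly positive, because particles feed in from the full left half and leave only at the finite rate permitted by jumps over the origin. Once the positive axis is (asymptotically) empty, the negative half-line evolves as an ASEP on $\{\dots,-2,-1\}$ with creation and destruction at the boundary, and I would compare it — via the coupling developed in Section \ref{sec: ASEP on the half line with destruction} — to that auxiliary process, whose boundary density is bounded below. Since $p(R)>0$ with $R>1$ makes the origin only a partial blockage, this comparison should produce a strictly positive lower bound for $\langle\bar\nu,\eta_{-1}\rangle$, contradicting the vanishing just derived and thereby forcing $C_1>0$. I expect the delicate point to be setting up the half-line process and its coupling so that the lower bound survives passage to the stationary limit $\bar\nu$; the reduction in the first three paragraphs is essentially mechanical given Lemmas \ref{lm: constant current for invariant measure}--\ref{lm: current 0 and asymptotic density} and Theorem \ref{thm: right asymptotic density is 0}.
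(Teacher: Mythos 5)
Your first three paragraphs are correct and essentially reproduce the paper's own reduction: assuming $C_1=0$ produces an invariant $\bar\nu$ with $\langle\bar\nu,C_{-1,1}\rangle=0$, Lemma \ref{lm: constant current for invariant measure} propagates this to all bonds, Lemma \ref{lm: Bernoulli measure} together with the current computation and Theorem \ref{thm: right asymptotic density is 0} force $\nu^*=\mu_0$, and Lemma \ref{lm: current 0 and asymptotic density} gives $\langle\bar\nu,\eta_x\rangle=0$ for $x>0$; your further deduction that $\langle\bar\nu,\eta_i\rangle=0$ for $1-R\leq i\leq -1$ is also correct. The genuine gap is the last paragraph, which is where the contradiction must actually come from: you assert, without proof, that a coupling with a half-line ASEP with destruction ``should'' yield $\langle\bar\nu,\eta_{-1}\rangle>0$. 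As stated this cannot work, because the empty configuration $\delta_\emptyset$ is invariant for $L$, has zero current, and satisfies every property you have derived for $\bar\nu$ (vanishing density at all controlled sites); hence no argument using only invariance, zero current, and emptiness of the right half-line can produce a positive lower bound on $\langle\bar\nu,\eta_{-1}\rangle$. Any correct completion must re-use the initial measure $\mu_{1,0}$ --- the fact that $\bar\nu$ arises as a Ces\`{a}ro limit of the evolution started from a fully occupied left half-line --- and your sketch does not say how the proposed coupling would do that, nor does the paper develop such a coupling.

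The paper closes the argument with particle--hole duality instead: holes jump with rates $\hat p(z)=p(-z)$, so after reflecting the lattice $x\mapsto -x$ the hole process is again the blocked ASEP with the same rates, started from the same step measure $\mu_{1,0}$. Consequently the entire chain you already carried out (zero current, Lemma \ref{lm: Bernoulli measure}, Theorem \ref{thm: right asymptotic density is 0}, Lemma \ref{lm: current 0 and asymptotic density}) applies verbatim to the holes and yields $\langle\bar\nu,1-\eta_x\rangle=0$, i.e.\ $\langle\bar\nu,\eta_x\rangle=1$, for all $x<0$. This contradicts your $\langle\bar\nu,\eta_{-1}\rangle=0$; alternatively, a full left half-line and empty right half-line give $\langle\bar\nu,C_{-1,1}\rangle\geq p(R)>0$, contradicting zero current directly. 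So the missing ingredient is not a new coupling at all, but the observation that the machinery you already invoked is symmetric under duality plus reflection. (A minor additional point: you should record $C_1\geq 0$, which follows from $N_t\geq 0$ under the step initial measure, so that ruling out $C_1=0$ indeed forces $C_1>0$.)
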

	\begin{proof}
		\textcolor{black}{Let $N_t$ be the (net) number of particles jumping \textcolor{black}{through} bond $(-1,1)$ by time t, which is the same as \eqref{eq: Nt} when the tagged particle is not moving. Since no particles exist on the positive axis under the initial measure $\mu_{1,0}$, $N_t \geq 0$. As ${N_t - \int_{0}^{t}C_{-1,1}(\eta_s)\,ds }$ is a $\mathbb{P}^{\mu_{1,0},0}$- martingale (see Chapter 6.2 \cite{KLO}),
		 we get,} $\frac{1}{t}\mathbb{E}^{\mu_{1,0},0}\left[N_t \right] \geq 0$ for $t>0$, and $\liminf_{t\to \infty} \langle\nu_t,C_{-1,1}\rangle \geq 0$.
		
		Suppose $C_1=0$, by tightness, there is an invariant measure $\bar{\nu}$ with a zero current	$\langle \bar{\nu},C_{-1,1} \rangle=0$. By Lemma \ref{lm: constant current for invariant measure}, $\langle \bar{\nu},C_{x,x+1} \rangle=0$, for $x\geq R$. We have
		\begin{align*}
		\langle \nu^*_{n_k},C_{R,R+1}\rangle &=\frac{1}{n_k}\sum_{i=1}^{n_k}\langle\tau_i\bar{\nu},C_{R,R+1}\rangle  \notag \\
		&= \frac{1}{n_k}\sum_{i=1}^{n_k}\langle\bar{\nu},C_{R+i,R+i+1}\rangle = 0.
		\end{align*} Then, for any weak limit $\nu^* = \lim_{k\to\infty} \nu^*_{n_k} =\lim_{k\to\infty} \frac{1}{n_k}\sum_{i=1}^{n_k}\tau_i\bar{\nu},$ 
		\begin{equation}	
		\langle \nu^*,C_{R,R+1}\rangle=0.\end{equation}

		On the other hand, by Lemma \ref{lm: Bernoulli measure}, $\nu^*$ is a mixture of Bernoulli measures, that is, $\nu^* = \int \mu_\rho dw_{\rho}$. A computation shows $\langle\mu_{\rho},C_{R,R+1}\rangle = \rho(1-\rho) \sum_{i\leq R, j\geq R+1} \textcolor{black}{ \left( p(i,j)-p(j,i)\right)}$, which is strictly positive unless $\rho = 0$ or $1$.  As a consequence, 
		\begin{equation}
		\nu^* =	w_1\mu_1 + w_0\mu_0, 
		\end{equation}
		with $w_1+w_0=1.$
		
		By Theorem \ref{thm: right asymptotic density is 0}, we have $w_1 \leq 2^{-\abs{A}}$, for any finite set $A \subset \mathbb{Z}$. This implies $ w_1=0$ and $w_0=1$. Then, by Lemma \ref{lm: current 0 and asymptotic density}, we have, for $x>0$,
		\begin{equation}\label{eq:right profile}
		\langle\nu^*,\eta_x\rangle = 0, \text{ and } \langle\bar{\nu},\eta_x\rangle = 0. 
		\end{equation}
		By the particle-hole duality, \textcolor{black}{i.e. viewing holes as particles and reversing the lattice $\mathbb{Z}$, we get a result like \eqref{eq:right profile}}: for $x<0$,
		\begin{equation}\label{eq:left profile}
		\langle\bar{\nu},\eta_x\rangle = 1.
		\end{equation}
		 \eqref{eq:right profile} and \eqref{eq:left profile} imply the current $\langle \bar{\nu},C_{-1,1} \rangle$ is positive, which is a contradiction.
	\end{proof}
	
	\section{AEP on Half Line with Creation and Annihilation} \label{sec: ASEP on the half line with destruction}
	
	To show Theorem \ref{thm: right asymptotic density is 0}, we will consider an auxiliary process: the AEP on the half line with creation and annihilation. This model has a long history and was studied by Liggett in \cite{Li75} and \cite{Li77}. We will use some results from \cite{Li75} and \cite{Li77} to show the estimate \eqref{eq: zero right asymptotic density} in Theorem \ref{thm: right asymptotic density is 0}. 
	
	\subsection{Comparison between AEP on Half Line with Creation and AEP with a Blockage}
	We first describe the AEP on the half line with only creation formally as follows. Particles \textcolor{black}{move according to} asymmetric exclusion process on half line $[1,\infty)$ with jump rates $p(x,y) = p(y-x)$. If a positive site $y>0$ is vacant, a particle is created at $y$ with rates $\sum_{x\leq 0} p(y-x)$. Also, no particles are allowed to jump out of the positive half line. Clearly, if we consider the AEP on $\mathbb{Z}$ with an immediate creation of particles on $(-\infty,0]$ when sites are vacant, we get exactly the same dynamic on positive sites.   
	
	The first lemma connects  the AEP with a blockage at a site with the AEP on the half line with creation. Denote the AEP with a blockage at site 0 by $\eta_t$, which has a probability measure $P$; denote the AEP on the half line with creation by $\zeta_t$, which has a probability measure $Q$. 
	
	\begin{lemma} \label{lm: correpsondence}
		Suppose AEP with a blockage at a site starts from the initial measure $\mu_{1,0}$ and the AEP on the half line with creation starts from the Bernoulli measure $\mu_0$ on positive axis. Then, for any finite subset $A \subset \mathbb{Z}_+$, and any $t \geq 0$,
		\begin{equation}
		\label{eq:comparison}
		P(\eta_t(x + R) = 1, \text{for all }x\in A) \leq Q(\zeta_t(x)= 1, \text{for all }x\in A).
		\end{equation}
	\end{lemma}
	\begin{proof} $R$ is the range of jump rates $p(\cdot)$ as defined at the beginning of Section \ref{sec: Current Zero}. \textcolor{black}{We use it to avoid sites too close to the orgin.} We can view holes and particles in the AEP with blockage as three classes of particles, and we use 1,2, and 3 to denote each class:
		\begin{enumerate}[label=\alph*.]
			\item Particles are always labeled as first class particles.
			\item A hole becomes a second class particle whenever it visits or starts from a site on $(-\infty,R]$. 
			\item A hole which never visits sites on $(-\infty,R]$ is labeled as a third class particle. 
		\end{enumerate}
		\textcolor{black}{Particularly, a second class particle can jump to a third class particle because holes can interchange their positions in AEP, but the third class cannot jump to a second class particle. It is easy to see, jumps of particles with lower numbers to sites occupied by particles with higher numbers are allowed, while jumps of particles with higher numbers to sites occupied by particles with lower numbers are not allowed. }
		Denote this process with three classes of particles by $\xi_t$, which has a probability measure $\tilde{P}$.

		See Figure \ref{Figure 5} for an example. In this example, \textcolor{black}{$\xi_0$ is the initial configuration; $\xi_{t_1}$ is the configuration after a (first class) particle jumps from $-1$ to $1$, a (second class) particle jumps from $2$ to $3$, and a (second class) particle jumps from $4$ to $6$; $\xi_{t_2}$ is a configuration at a general time $t_2$.}
		
		\begin{figure}[h!]
			\begin{center}
				\begin{tikzpicture} 
				\draw[black, thick] (-4.3,0) -- (4.3,0)node[black,right] {$\xi_0$};  
				\draw [black,fill] (-0.1-2,0.1) rectangle (0.1-2,-0.1) node [black,below=4] {Blockage}; 
				
				\foreach \x in {1,3,4}
				\draw (\x,0) circle (0.1)
				node [black,below=4] {$3$};
				
				\foreach \x in {-1,0,2}
				\draw (\x,0) circle (0.1)
				node [black,below=4] {$2$};
				
				\draw (0,0) circle (0)
				node [black,above=4] {$R=2$};
				
				\foreach \x in {-4,-3}
				\draw [black,fill] (\x,0) circle (0.1)
				node [black,below=4] {$1$};

				
				\draw[black, thick] (-4.3,-1) -- (4.3,-1) node[black,right] {$\xi_{t_1}$};  
				\draw [black,fill] (-0.1-2,0.1-1) rectangle (0.1-2,-0.1-1) node [black,below=4] {Blockage}; 
				
				\foreach \x in {2,3}
				\draw (\x,0-1) circle (0.1)
				node [black,below=4] {$3$};
				
				\foreach \x in {1,4}
				\draw [black,fill] (-\x,0-1) circle (0.1)
				node [black,below=4] {$1$};
				
				\foreach \x in {-3,0,1,4}
				\draw (\x,0-1) circle (0.1)
				node [black,below=4] {$2$};
				
				\draw[black, thick] (-4.3,-2) -- (4.3,-2) node[black,right] {$\xi_{t_2}$};  
				\draw [black,fill] (-2-0.1,0.1-2) rectangle (0.1-2,-0.1-2) node [black,below=4] {Blockage}; 
				
				\foreach \x in {1}
				\draw (\x,0-2) circle (0.1)
				node [black,below=4] {$3$};
				
				\foreach \x in {-1,0,2,-3,4}
				\draw [black,fill] (\x,0-2) circle (0.1)
				node [black,below=4] {$1$};
				
				\foreach \x in {-4,3}
				\draw (\x,0-2) circle (0.1)
				node [black,below=4] {$2$};
				
				\end{tikzpicture}
			\end{center}
			\caption{The AEP with Blockage and 3 Classes of Particles}
			\label{Figure 5}
		\end{figure}
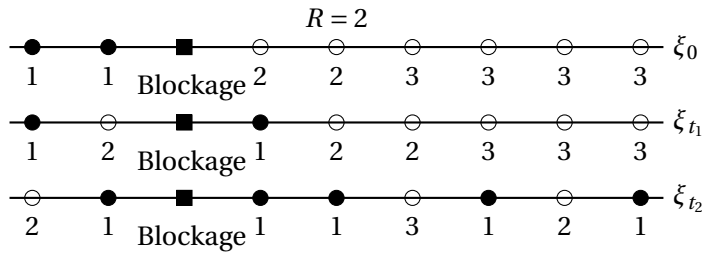
		
		Identifying the first class particles in $\tilde{P}$ as standard particles in $\mathit{P}$, we have
		\begin{equation} \label{eq: identifying three classes of particles}
		\tilde{P}(\xi_t(x) = 1, \text{for all }x\in A+R) = P(\eta_t(x) = 1, \text{for all }x\in A+R)
		\end{equation}
		On the other hand, \textcolor{black}{the dynamic of the third class particles correspond to the dynamic of holes in $Q$ after translation by $+R$. Therefore}, we have for any subset $B\subset A$,
		\begin{equation} \label{eq: identifying P, Q process}
		\tilde{P}(\xi_t(x) \neq 3, \text{for all }x\in B+R) = Q(\zeta_t(x) \neq 0, \text{for all }x\in B)
		\end{equation}
		As a consequence of \eqref{eq: identifying three classes of particles} and \eqref{eq: identifying P, Q process},
		\begin{align*}
		P(\eta_t(x+R) = 1, \text{for all }x\in A+R) \leq& \tilde{P}(\xi_t(x+R) =1 \text{ or }2, \text{for all }x\in A)
		\\		
		=& Q(\zeta_t(x)= 1, \text{for all }x\in A). 
		\end{align*}
	\end{proof}
	
	\subsection{Couplings in the AEP with Creation and Annihilation}
	By the above lemma, we can study the asymptotic behavior of the AEP on half line with only creation. The main theorem of this section is Theorem \ref{thm:main convergence for Q process}. The proof of Theorem \ref{thm:main convergence for Q process} can be derived from results in \cite{Li77}, with stochastic orderings (couplings). We start with some notion and results from \cite{Li77} and \cite{Li75}.
	
	Consider a subset $D_{m,n} = \{m,m+1,\dots,n\} \subset \mathbb{Z}$, the configuration space on $D_{m,n}$  \textcolor{black}{is} $\mathbb{X}_{m,n} = \{0,1\}^{D_{m,n}}$, and a probability measure $v_{m,n}$ on $\mathbb{X}_{m,n}$. We can extend $v_{m,n}$ to a measure on $\mathbb{X}_{-\infty,\infty} = \{0,1\}^{\mathbb{Z}}$ by taking product measure. Let $\lambda, \rho \in [0,1]$, we can have
	\textcolor{black}{
	\begin{align} 
	v_{m,n;\lambda,\rho} =& \mu^{-\infty,m-1}_\lambda \otimes v_{m,n} \otimes \mu^{n+1,\infty}_\rho,\label{eq: extend measures1 }\\
	v_{m,\infty;\lambda} =& \mu^{-\infty,m-1}_\lambda \otimes v_{m,\infty} \label{eq: extend measures2 } \end{align}
}
	where \textcolor{black}{$\mu^{-\infty,m-1}_\lambda$ is a Bernoulli measure with density $\lambda$ on $\mathbb{X}_{-\infty,m-1} = \{0,1\}^{\{i:i<m\}}$ and $\mu^{n+1,\infty}_\rho$ is a Bernoulli measure with density $\rho$ on $\mathbb{X}_{n+1,\infty} = \{0,1\}^{\{i:i>n\}}$. With this extension, we can compare measures on different $\mathbb{X}_{m,n}$ with partial orders on the space of measures on $\mathbb{X}_{-\infty,\infty}$.}
	
	We first define partial orders on the space of configurations $\mathbb{X}_{-\infty,\infty}$   
	\begin{equation} \label{eq:basic coupling, for config}
	\eta \geq \xi \Leftrightarrow \eta(x) \geq \xi(x) \text{ for all } x \in \mathbb{Z}
	\end{equation}
	and then we can define partial orders on the space of probability measures via stochastic ordering. For any \textcolor{black}{increasing} local function $f$ (with respect to  \eqref{eq:basic coupling, for config}),
	\begin{equation}\label{eq:basic coupling, stoch ordering}
	\nu \geq \mu \Leftrightarrow \langle \nu,f\rangle \geq \langle \mu,f\rangle
	\end{equation}
	
	We will consider the AEP with creation and annihilation on both a finite system and an infinite system. The former is a process on $\mathbb{X}_{m,n}$ with a generator $\Omega_{m,n}^{\lambda,\rho}$. $\Omega_{m,n}^{\lambda,\rho}$ acts on a local function $f$ by
	\begin{align}
	\Omega_{m,n}^{\lambda,\rho}f(\eta) =& \sum_{x<m, y\in D_{m,n}} \left( p(x,y)\lambda (1-\eta_y) +  p(y,x)(1-\lambda)\eta_y \right) \left(f(\eta^y) -f(\eta)\right) \notag \\
	&+\sum_{x\in D_{m,n},y>n} \left( p(x,y)\eta_x (1-\rho) +  p(y,x)\rho(1-\eta_x) \right) \left(f(\eta^x) -f(\eta)\right) \notag \\
	&+\sum_{x,y\in D_{m,n}}  p(x,y)\eta_x(1-\eta_y) \left(f(\eta^{x,y}) -f(\eta)\right),
	\end{align}
	where \textcolor{black}{
	\[
	\eta^x(z) = \begin{cases}
	1-\eta(x)  &, \text{ if }  z = x\\
	\eta(z)   &,  \text{ else }.
	\end{cases}  \]}
	
	And the later is a process on $\mathbb{X}_{m,\infty}$ with a generator $\Omega_{m,\infty}^{\lambda}$. $\Omega_{m,\infty}^{\lambda}$ acts on a local function $f$ by
	\begin{align}
	\Omega_{m,\infty}^{\lambda}f(\eta) =& \sum_{x<m, y\geq m} \left( p(x,y)\lambda (1-\eta_y) +  p(y,x)(1-\lambda)\eta_y \right) \left(f(\eta^y) -f(\eta)\right) \notag \\
	&+\sum_{x,y\geq m }  p(x,y)\eta_x(1-\eta_y) \left(f(\eta^{x,y}) -f(\eta)\right).
	\end{align}
	
%
	\subsection{Liggett's Results and Their Consequences }\label{sec: proof of u1/2 dominance}
	Below are results from \cite{Li75} and \cite{Li77}. \textcolor{black}{Particularly, the monotonicity in the first part of Lemma \ref{lm: correspondence btw Li 77} guarantees interchanging of limits. Recall $\mu_\rho^{m,n}$ is a Bernoulli measure on $\mathbb{X}_{m,n}$ with density $\rho$.}
	\begin{lemma}\label{lm: correspondence btw Li 77}
		\textcolor{black}{Assume} $1\geq \lambda \geq \rho \geq 0$, and  $m\leq n \leq \infty$. Let \textcolor{black}{$\nu_{m,n}^{\lambda,\rho}(t):= \mu^{-\infty,m-1}_\lambda \otimes  \left(\mu_\rho^{m,n} S_{m,n}^{\lambda,\rho}(t)\right)\otimes \mu^{n+1,\infty}_\rho$}. Then we have,
		\begin{enumerate}
			\item In the sense of \eqref{eq:basic coupling, stoch ordering}, the probability measure \textcolor{black}{$\nu_{m,n}^{\lambda,\rho}(t)$} is increasing in parameters $m,n,t, \lambda$ and $\rho$. 
			\item \textcolor{black}{Let $\bar{\nu}_{m,n;\lambda,\rho} = \lim_{t\uparrow\infty} \nu_{m,n}^{\lambda,\rho}(t) $. $\bar{\nu}_{m,n;\lambda,\rho}$ converges to a unique limit $\bar{\nu}_{m;\lambda,\rho}$ as $n$ goes to $\infty$. And $\bar{\nu}_{m;\lambda,\rho} = \lim_{t\uparrow\infty}\mu^{-\infty,m-1}_\lambda\otimes \left(\mu_{\rho}^{m,\infty} S_{m,\infty}^{\lambda}(t)\right) $.}
			\item For $n-m> 2R$, the current in $D_{m,n}$ has two lower bounds:
			\begin{equation} \label{eq: lower bound for current2}
			\langle \bar{\nu}_{m,n;\lambda,\rho},C_{x,x+1} \rangle \geq w \cdot \max\{\lambda(1-\lambda),\rho(1-\rho)\}
			\end{equation}
			where $w=\sum_{\abs{k}\leq R} kp(k)$.
		\end{enumerate} 
	\end{lemma}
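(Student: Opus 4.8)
The three assertions are of different character, and I would treat them separately. Parts 1 and 2 are the attractiveness and monotone-convergence statements of Liggett's \cite{Li75,Li77}, which I would quote, reconstructing the mechanism only briefly. The generators $\Omega_{m,n}^{\lambda,\rho}$ and $\Omega_{m,\infty}^{\lambda}$ are attractive: the bulk rates are those of the ASEP, and the boundary creation rate is increasing in $\lambda$ (resp. $\rho$) while the destruction rate is decreasing, so a basic coupling preserving the order \eqref{eq:basic coupling, for config} exists. Monotonicity in $\lambda$ and $\rho$ is then immediate, since raising a reservoir density only adds creation and removes destruction and the coupled pair stays ordered; monotonicity in $m$ and $n$ compares two systems on nested domains (enlarging the $\lambda$-reservoir, or enlarging the bulk at the expense of the $\rho$-reservoir), and because $\lambda\ge\rho$ each such move raises the coupled configuration. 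Monotonicity in $t$ follows once one checks that the initial law $\mu_\lambda\otimes\mu_\rho\otimes\mu_\rho$ is stochastically below its time-$s$ evolution for small $s>0$ (the high-density left reservoir pushes mass into the $\rho$-bulk); attractiveness propagates this to all times.

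For part 2 the plan is to combine these monotonicities with compactness of $\mathbb{X}$. A monotone bounded family of measures in the stochastic order converges weakly, so $\bar\nu_{m,n;\lambda,\rho}=\lim_t v_{m,n;\lambda,\rho}\tilde S_{m,n}^{\lambda,\rho}(t)$ exists and is invariant; by monotonicity in $n$ the family $(\bar\nu_{m,n;\lambda,\rho})_n$ is increasing and bounded, hence converges to $\bar\nu_{m;\lambda,\rho}$. The identification of this limit with the $t\to\infty$ limit of the half-line process is an interchange-of-limits argument: $\Omega_{m,\infty}^{\lambda}$ is the $n\to\infty$ limit of the finite generators, and the two iterated limits agree because the relevant laws are monotone in both $n$ and $t$.

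The substance is part 3. First, at stationarity the expected current is divergence-free: testing invariance of $\bar\nu_{m,n;\lambda,\rho}$ against $\eta_y$ for an interior $y$ gives $\langle\bar\nu,C_{y-1,y}\rangle=\langle\bar\nu,C_{y,y+1}\rangle$, exactly the computation of Lemma~\ref{lm: constant current for invariant measure}, so there is one constant $J:=\langle\bar\nu_{m,n;\lambda,\rho},C_{x,x+1}\rangle$, which by conservation also equals the flux across each reservoir/bulk interface. Next I would sandwich the measure. When $\lambda=\rho$ the product measure $\mu_\lambda$ is invariant for $\Omega^{\lambda,\lambda}_{m,n}$ (Bernoulli measures are ASEP-invariant, and once both reservoirs act at the common density the matched boundaries leave $\mu_\lambda$ unchanged), so $\bar\nu_{m,n;\lambda,\lambda}=\mu_\lambda$ and $\bar\nu_{m,n;\rho,\rho}=\mu_\rho$; part 1 then forces $\mu_\rho\le\bar\nu_{m,n;\lambda,\rho}\le\mu_\lambda$, and in particular $\rho\le\langle\bar\nu,\eta_y\rangle\le\lambda$ for every bulk $y$. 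Finally I evaluate $J$ at the two interfaces. Across the left cut $(m-1,m)$ the sites $x\le m-1$ form an independent Bernoulli($\lambda$) reservoir, so
\begin{equation*}
J=\sum_{x\le m-1,\ y\ge m}( p(y-x)\,\lambda\,(1-\langle\eta_y\rangle)-p(x-y)\,(1-\lambda)\,\langle\eta_y\rangle ),
\end{equation*}
which is decreasing in each bulk density $\langle\eta_y\rangle$; since $\langle\eta_y\rangle\le\lambda$, replacing every density by $\lambda$ only lowers $J$, and the replaced sum telescopes to $w\,\lambda(1-\lambda)$ (each jump length $k$ crosses the cut exactly $k$ times, so the count yields $w=\sum_{k\ge1}k[p(k)-p(-k)]$). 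This gives $J\ge w\,\lambda(1-\lambda)$. The symmetric computation at the right cut $(n,n+1)$, where the reservoir is Bernoulli($\rho$) and the flux is \emph{increasing} in the bulk densities with $\langle\eta_x\rangle\ge\rho$, yields $J\ge w\,\rho(1-\rho)$. Taking the better of the two proves \eqref{eq: lower bound for current2}; the hypothesis $n-m>2R$ is used precisely so that the bulk sites entering each interface sum lie inside $D_{m,n}$ and the two boundary layers do not interfere, so the density bounds apply cleanly.

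The main obstacle I anticipate is the pair of structural facts underlying part 3: that the stationary flux is genuinely constant across every cut, including the reservoir interfaces, and that $\mu_\lambda$ really is invariant for $\Omega^{\lambda,\lambda}_{m,n}$ when $p(\cdot)$ is asymmetric. The latter is not a detailed-balance statement — at a single boundary the asymmetry does not cancel — and must be verified through the divergence-free condition $\langle\Omega^{\lambda,\lambda}\eta_y\rangle_{\mu_\lambda}=0$, where the asymmetric contributions cancel only after both reservoirs are summed (each contributing its full rate $\sum_k p(k)$). Once these two points are secured, the monotonicity of the interface flux in the bulk densities is a one-line sign check and the rest of part 3 is routine.
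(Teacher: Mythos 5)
Your proposal is correct, and for parts 1 and 2 it does essentially what the paper does: the paper's proof of those parts is a citation of Liggett (Theorems 2.4 and 2.13 and Proposition 2.2 of \cite{Li75}, i.e.\ attractiveness, monotone convergence, and Trotter--Kurtz for the $n\to\infty$ identification), and your sketch is a faithful reconstruction of that material. The genuine divergence is in part 3. The paper, following the proof of Proposition 2.6 of \cite{Li77}, obtains the needed bulk density bounds from monotonicity in $m$ and $n$: pushing $m$ up past a site $y$ places $y$ inside the $\lambda$-reservoir, so $\langle\bar\nu_{m,n;\lambda,\rho},\eta_y\rangle\le\langle\bar\nu_{y+1,n;\lambda,\rho},\eta_y\rangle=\lambda$, and symmetrically lowering $n$ below $y$ gives $\langle\bar\nu_{m,n;\lambda,\rho},\eta_y\rangle\ge\rho$; then one computes the flux at the two cuts $C_{m-1,m}$ and $C_{n,n+1}$. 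You reach the same density bounds by a different mechanism: monotonicity in $(\lambda,\rho)$ plus the claim that the equal-density product measure $\mu_\lambda$ is invariant for $\Omega^{\lambda,\lambda}_{m,n}$, which sandwiches $\bar\nu_{m,n;\lambda,\rho}$ between $\mu_\rho$ and $\mu_\lambda$. After that the two arguments coincide: constancy of the stationary flux (your appeal to the mechanism of Lemma \ref{lm: constant current for invariant measure} is right, and it does extend to the reservoir cuts), the sign check in the bulk densities, and the counting that each jump length $k$ crosses a cut $k$ times, yielding the constant $w$. Your route is self-contained and slightly more general in spirit; the paper's route needs nothing beyond part 1.

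One step in your version is under-justified. You propose to verify the invariance of $\mu_\lambda$ for $\Omega^{\lambda,\lambda}_{m,n}$ "through the divergence-free condition $\langle\Omega^{\lambda,\lambda}\eta_y\rangle_{\mu_\lambda}=0$". That condition is necessary but not sufficient: invariance means $\langle\Omega^{\lambda,\lambda}_{m,n}f\rangle_{\mu_\lambda}=0$ for \emph{every} cylinder $f$, and for asymmetric $p(\cdot)$ the bulk exchange terms tested against general $f$ produce correlation terms that do not cancel site by site. The clean proof is a projection argument: for $f$ depending only on sites in $D_{m,n}$, take the full-line generator $L_0$ of \eqref{eq:generator for ASEP}, note that jumps with both endpoints outside $D_{m,n}$ do not move $f$, and integrate out the outside sites under $\mu_\lambda^{\mathbb{Z}}$ (they enter the rates linearly and independently); this gives $\langle\mu_\lambda^{\mathbb{Z}},L_0f\rangle=\langle\mu_\lambda,\Omega^{\lambda,\lambda}_{m,n}f\rangle$, and the left-hand side vanishes because Bernoulli product measures are invariant for the translation-invariant ASEP. (Note you need this at both ends of the sandwich, since by the lemma's convention the initial bulk measure is tied to $\rho$.) Alternatively, you can drop the product-measure invariance altogether and use the paper's $(m,n)$-monotonicity argument for the density bounds. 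With either repair, your part 3 is a complete proof of \eqref{eq: lower bound for current2}.
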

	\begin{proof}
		The first part \textcolor{black}{of Lemma \ref{lm: correspondence btw Li 77}} is proved in Theorem 2.4, 2.13 in \cite{Li75}. \textcolor{black}{The second part is a consequence of the monotonicity in parameters from the first part and the Trotter  Theorem, see Proposition 2.2 in \cite{Li75}. We only show the last equality:
			\begin{align*}
				\lim_{t\uparrow\infty}\mu_\lambda^{-\infty,m-1}\otimes \left(\mu_{\rho}^{m,\infty} S_{m,\infty}^{\lambda}(t)\right) =& \lim_{t\uparrow\infty}\mu_\lambda^{-\infty,m-1}\otimes  \left(\lim_{n\uparrow \infty}\left( \mu_{\rho}^{m,n} S_{m,n}^{\lambda,\rho}(t)\right)\otimes \mu_\rho^{n+1,\infty}\right) \notag \\
				=& \lim_{t\uparrow\infty}\lim_{n\uparrow \infty}\mu_\lambda^{-\infty,m-1}\otimes \left( \mu_{\rho}^{m,n} S_{m,n}^{\lambda,\rho}(t)\right)\otimes \mu_\rho^{n+1,\infty} \notag \\
				=& \lim_{t\uparrow\infty}\lim_{n\uparrow \infty} \nu_{m,n}^{\lambda,\rho}(t) \notag\\
				=&\lim_{n\uparrow \infty}\lim_{t\uparrow\infty} \nu_{m,n}^{\lambda,\rho}(t) = \bar{\nu}_{m;\lambda,\rho}.
			\end{align*}
			Particularly, the first line is by Proposition 2.2 \cite{Li75}, and the interchanging of limits in the third line is by the monotonicity in parameters $n,t$ from the first part.} The third part \textcolor{black}{of Lemma \ref{lm: correspondence btw Li 77}} is by the proof of Propositin 2.6 in \cite{Li77}. It is a consequence of the monotonicity of $\bar{v}_{m,n,\lambda,\rho}$ in $m,n$ and a direct computation of currents at two boundaries $C_{m-1,m}$ and $C_{n,n+1}$. 
	\end{proof}
	
	The main theorem of this section says the AEP on half line with creation has a limiting measure. When translated along the positive direction, the limiting measure converges to the Bernoulli measure $\mu_{\frac{1}{2}}$ in the Ces\`{a}ro sense. \textcolor{black}{This corresponds to the limiting measure of usual AEP is the Bernoulli measure $\mu_\frac{1}{2}$ when the initial measure is the step measure $\mu_{1,0}$.}
	\begin{theorem} \label{thm:main convergence for Q process}Assume the AEP on half line with creation has the initial configuration with only holes in positive sites. Let $m_t$ be measures on $\{0,1\}^{\mathbb{Z}_+}$ with $\langle m_t,\prod_{x\in A}\eta_x \rangle = Q(\zeta_t(x)=1,\text{for all }x\in A)$ for any finite subset $A \subset \mathbb{Z}_+$. 
		Then we have \textcolor{black}{the following},
		\begin{align}
		&\lim_{t\to\infty}m_t = \bar{m} \text{ exists} \label{eq: limiting measure}\\
		&\lim_{N\to \infty}\frac{1}{N}\sum_{i=1}^{N}\langle \bar{m},\prod_{x\in A+i}\eta_x \rangle = 2^{-\abs{A}}.   \label{eq:key estimates for Q} 
		\end{align}
		
	\end{theorem}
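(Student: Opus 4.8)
The plan is to realize the $Q$-process as the $\lambda=1$ boundary-driven process of Lemma \ref{lm: correspondence btw Li 77}, and then to determine its Ces\`aro limit by trapping it in stochastic order between two reference measures whose bulk currents are forced to the maximal value $w/4$. Observe first that the ASEP on the half line with only creation is exactly the infinite-system process generated by $\Omega_{1,\infty}^{\lambda}$ with $\lambda=1$: the creation rate $\sum_{x\le 0}p(y-x)$ at a vacant $y\ge 1$ is the $\lambda=1$ boundary term, and no destruction occurs because $1-\lambda=0$. With the initial measure $\mu_0$ (only holes), we thus have $m_t = v_{1,\infty;1,0}\tilde S^{1}_{1,\infty}(t)$, so part 2 of Lemma \ref{lm: correspondence btw Li 77} yields the existence of $\bar m = \bar\nu_{1;1,0}=\lim_t m_t$, which is \eqref{eq: limiting measure}.

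For \eqref{eq:key estimates for Q} I would first record, exactly as in Lemma \ref{lm: constant current for invariant measure} and Lemma \ref{lm: Bernoulli measure}, that for any measure $\bar\nu$ invariant under a half-line creation dynamics the bulk current $\langle\bar\nu,C_{x,x+1}\rangle$ is constant in $x$, and that every weak limit $\nu^\sharp$ of its Ces\`aro translates $\frac1N\sum_{i=1}^N\tau_i\bar\nu$ is translation invariant and invariant for the full-line generator $\mathit{L}_0$, hence a mixture $\nu^\sharp=\int_0^1\mu_\rho\,dW(\rho)$; the source term differs from $\mathit{L}_0$ only near the origin and so is killed by the spatial average. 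Since $C_{0,1}$ is local, the current passes to the limit, giving $\langle\nu^\sharp,C_{0,1}\rangle=\lim_x\langle\bar\nu,C_{x,x+1}\rangle=w\int_0^1\rho(1-\rho)\,dW(\rho)$, where $w=\sum_{\abs{k}\le R}k\,p(k)>0$ under the standing hypotheses. Because $\rho(1-\rho)\le\tfrac14$ with equality only at $\rho=\tfrac12$, a current equal to $w/4$ forces $W=\delta_{1/2}$, that is $\nu^\sharp=\mu_{1/2}$.

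I would then apply this to the two reference measures $\bar\nu_{1;1/2,0}$ and $\bar\nu_{1;1,1/2}$. Both satisfy the admissibility $\lambda\ge\rho$, and part 3 of Lemma \ref{lm: correspondence btw Li 77} (at $\lambda=\tfrac12$ for the first, at $\rho=\tfrac12$ for the second) gives bulk current $\ge w/4$ for the finite systems; letting $n\to\infty$ by part 2 and combining with the automatic bound $\le w/4$ from the previous paragraph shows each has current exactly $w/4$, hence each has Ces\`aro-mixture $\mu_{1/2}$. Monotonicity (part 1) now brackets our measure, $\bar\nu_{1;1/2,0}\le\bar m=\bar\nu_{1;1,0}\le\bar\nu_{1;1,1/2}$, where raising $\lambda$ from $\tfrac12$ to $1$ gives the lower bound and raising $\rho$ from $0$ to $\tfrac12$ gives the upper bound. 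Stochastic ordering is preserved by translation and by weak limits, and $\prod_{x\in A}\eta_x$ is increasing, so for every subsequential Ces\`aro limit $\nu^\sharp_0$ of $\bar m$ one obtains $2^{-\abs{A}}=\langle\mu_{1/2},\prod_{x\in A}\eta_x\rangle\le\langle\nu^\sharp_0,\prod_{x\in A}\eta_x\rangle\le\langle\mu_{1/2},\prod_{x\in A}\eta_x\rangle=2^{-\abs{A}}$. Thus every subsequential limit equals $2^{-\abs{A}}$, the Ces\`aro limit exists, and \eqref{eq:key estimates for Q} follows.

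The main obstacle I anticipate is the reduction in the second paragraph: making rigorous that a weak limit of the Ces\`aro translates of a half-line (source) invariant measure is genuinely invariant for the full-line ASEP, so that Theorem 2.6.2 of \cite{KL} applies and the current identity holds. This requires controlling the difference between the half-line generator and $\mathit{L}_0$ uniformly enough that it vanishes under the spatial average, together with checking that translating a measure supported on $\mathbb{Z}_+$ loses nothing as the test set $A+i$ escapes to $+\infty$. The argument parallels Lemma \ref{lm: Bernoulli measure} but must be run for the two reference boundary processes; everything else is bookkeeping assembly of parts 1--3 of Lemma \ref{lm: correspondence btw Li 77}.
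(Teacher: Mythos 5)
Your proposal is correct and follows essentially the same route as the paper's proof: identify the $Q$-process with Liggett's boundary-driven half-line process at $\lambda=1$, use parts 1--3 of Lemma \ref{lm: correspondence btw Li 77}, characterize Ces\`aro translate limits as Bernoulli mixtures (the Lemma \ref{lm: Bernoulli measure} argument), and force the mixture to $\mu_{1/2}$ by matching the current upper bound $w/4$ with the lower bound at the reference parameters $(\tfrac12,0)$ and $(1,\tfrac12)$, then sandwich by monotonicity. The only (immaterial) difference is that you sandwich the half-line measures before passing to the Ces\`aro limit, whereas the paper first identifies the limit $m(\lambda,\rho)=\lim_{m\to\infty}\bar\nu_{-m;\lambda,\rho}$ and sandwiches afterwards; in fact your choice of reference pair $(1,\tfrac12)$ corrects what appears to be a typo ($m(0,\tfrac12)$) in the paper's own proof.
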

	\begin{proof}
		\textcolor{black}{Assume $\lambda \geq \rho$, from Lemma \ref{lm: correspondence btw Li 77}, $\bar{\nu}_{m;\lambda,\rho}$ is the limiting measure of $\nu_{m,n}^{\lambda,\rho}(t)$ as $t,n$ go to $\infty$.} It is also increasing in $m,\lambda,\rho$. Therefore, we can define a unique limiting measure $m(\lambda,\rho)$, which is also increasing in $\lambda$ and $\rho$,
		\begin{equation} 
		m(\lambda,\rho) =\lim_{m\to \infty}\bar{\nu}_{-m;\lambda,\rho}.
		\end{equation}
		It is also the same as the limit of the Ces\`{a}ro means of $\bar{v}_{m;\lambda,\rho}$ under translation:
		\begin{align*}\tau_i \bar{\nu}_{m;\lambda,\rho} =& \lim_{n\to\infty}\tau_i\bar{\nu}_{m,n;\lambda,\rho} = \lim_{n\to\infty}\bar{\nu}_{m-i,n-i;\lambda,\rho} =  \bar{\nu}_{m-i;\lambda,\rho}(\lambda,\rho),\\
		\textcolor{black}{m(\lambda,\rho)} =&\lim_{N\to\infty}\tau_N \bar{\nu}_{m;\lambda,\rho} = \lim_{N\to\infty}\frac{1}{N}\sum_{i=1}^{N}\tau_i  \bar{\nu}_{m;\lambda,\rho}.
		\end{align*}
		
		\textcolor{black}{On the other hand, we see that the limit $m(\lambda,\rho)$ is translation invariant and invariant with respect to the $\mathit{L}_0$, following similar computations in Lemma \ref{lm: Bernoulli measure}. Therefore, $m(\lambda,\rho)$ is a mixture of Bernoulli measures. For any Bernoulli measure $\mu_\rho$, a small computation shows $\langle \mu_\rho , C_{R,R+1}\rangle = w \rho(1-\rho) \leq  \frac{1}{4} w$, where $w = \sum_{\abs{k} \leq R} kp(k)$. As a consequence, we get an upper bound, for any $\lambda \geq \rho$,
		\begin{equation}\label{eq:upper bound for current, wrt u^*}
		\langle m(\lambda,\rho) , C_{R,R+1}\rangle \leq \frac{1}{4} w,
		\end{equation} and equality holds if and only if  $m(\lambda,\rho) =\mu_{\frac{1}{2}}$.}

		\textcolor{black}{The lower bound \eqref{eq: lower bound for current2} in Lemma \ref{lm: correspondence btw Li 77} indicates
		$\langle m\left(\frac{1}{2},0\right) , C_{R,R+1}\rangle \geq \frac{1}{4} w$, $\langle \textcolor{black}{m\left(1,\frac{1}{2}\right) }, C_{R,R+1}\rangle \geq \frac{1}{4}w$. We see that $m\left(\frac{1}{2},0\right)$ and $m\left(0,\frac{1}{2}\right)$ are  Bernoulli measures with the same density $\frac{1}{2}$, 
		\[m\left(\frac{1}{2},0\right)  =  m\left(1,\frac{1}{2}\right)  = \mu_{\frac{1}{2}}.\]}
		Together with monotonicity in $\lambda,\rho$ , we get for $\lambda \geq \frac{1}{2} \geq \rho$,
		\begin{equation}\textcolor{black}{
		\mu_{\frac{1}{2}}=m\left(\frac{1}{2},0\right) \leq m(\lambda,\rho)\leq m\left(1,\frac{1}{2}\right) =\mu_{\frac{1}{2}}.} \end{equation}
		
		We can conclude the proof by letting $\lambda =1, \rho =0$, and identifying 
		$m_t$ as the restriction of\textcolor{black}{ $\nu_{0,\infty}^{1,0}(t)$ on $\mathbb{X}_{0,\infty}$. Taking weak limits (again by Lemma \ref{lm: correspondence btw Li 77})}, we  get \eqref{eq:key estimates for Q}
		\[\lim_{N\to \infty}\frac{1}{N}\sum_{i=1}^{N}\langle \bar{m},\prod_{x\in A+i}\eta_x \rangle =\langle m(1,0),\prod_{x\in A}\eta_x \rangle = 2^{-\abs{A}}. \]  	
	\end{proof}
	
	\textcolor{black}{We give the proof of Theorem \ref{thm: right asymptotic density is 0} as a corollary of Theorem \ref{thm:main convergence for Q process}}.
	\begin{corollary}\textcolor{black}{(proof of Theorem \ref{thm: right asymptotic density is 0})}\label{cor: density estimate}
		Let $\bar{\nu}$ be a weak limit of the mean of empirical measures $\bar{\nu}_{T_n}$, and $\nu^*$ be a weak limit of the Ces\`{a}ro means of $\bar{\nu}$ under translation \eqref{eq:Cesaro mean of nu}. Then for any finite set $A \subset \mathbb{Z}$,
		\begin{equation} 
		\langle{\nu}^*,\prod_{x\in A}\eta_x\rangle \leq 2^{-\abs{A}}.
		\end{equation}
	\end{corollary}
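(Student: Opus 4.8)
The plan is to chain the comparison estimate \eqref{eq:comparison} together with the two convergence statements of Theorem \ref{thm:main convergence for Q process}, transporting the inequality through the two successive averages that build $\nu^*$: the time average defining $\bar\nu$ and the spatial Ces\`aro average defining $\nu^*$. Writing $\eta_B=\prod_{x\in B}\eta_x$ for a finite $B$ and using $\tau_i\eta_A=\eta_{A+i}$, the definition \eqref{eq:Cesaro mean of nu} gives
\[
\langle\nu^*,\eta_A\rangle=\lim_{k\to\infty}\frac{1}{n_k}\sum_{i=1}^{n_k}\langle\bar\nu,\eta_{A+i}\rangle,
\]
and since $\bar\nu$ is the weak limit of the time-averaged empirical measures of the blockage process $P=\mathbb{P}^{\mu_{1,0},0}$ and each $\eta_{A+i}$ is local,
\[
\langle\bar\nu,\eta_{A+i}\rangle=\lim_{n\to\infty}\frac{1}{T_n}\int_0^{T_n}P\big(\eta_s(x)=1,\ \forall x\in A+i\big)\,ds.
\]

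First I would feed the set $A+i$ into \eqref{eq:comparison}. For every $i$ large enough that $(A-R)+i\subset\mathbb{Z}_+$ (all but finitely many $i$, as $A$ is fixed), writing $A+i=\big((A-R)+i\big)+R$ yields
\[
P\big(\eta_s(x)=1,\ \forall x\in A+i\big)\ \leq\ Q\big(\zeta_s(x)=1,\ \forall x\in (A-R)+i\big)\ =\ \langle m_s,\eta_{(A-R)+i}\rangle,
\]
the last equality being the definition of $m_s$. Because this bound holds pointwise in $s$ it survives the time average; and since Theorem \ref{thm:main convergence for Q process} provides the \emph{strong} convergence $m_s\to\bar m$ in \eqref{eq: limiting measure}, the time average of the convergent quantity $\langle m_s,\eta_{(A-R)+i}\rangle$ equals its limit. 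Hence, for all large $i$,
\[
\langle\bar\nu,\eta_{A+i}\rangle\ \leq\ \lim_{n\to\infty}\frac{1}{T_n}\int_0^{T_n}\langle m_s,\eta_{(A-R)+i}\rangle\,ds\ =\ \langle\bar m,\eta_{(A-R)+i}\rangle.
\]

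It then remains to take the spatial Ces\`aro average. Substituting the last display into the formula for $\langle\nu^*,\eta_A\rangle$ and discarding the finitely many small-$i$ terms (each at most $1$, hence negligible after dividing by $n_k\to\infty$), I obtain
\[
\langle\nu^*,\eta_A\rangle\ \leq\ \lim_{k\to\infty}\frac{1}{n_k}\sum_{i=1}^{n_k}\langle\bar m,\eta_{(A-R)+i}\rangle.
\]
The estimate \eqref{eq:key estimates for Q} of Theorem \ref{thm:main convergence for Q process}, applied to the translate $A-R$ of cardinality $\abs{A}$, shows the right-hand side converges along the full index sequence (hence along $n_k$) to $2^{-\abs{A-R}}=2^{-\abs{A}}$; note the Ces\`aro limit depends only on the tail $i\to\infty$, where $(A-R)+i\subset\mathbb{Z}_+$, so the possibly non-positive entries of $A-R$ cause no trouble. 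This gives $\langle\nu^*,\eta_A\rangle\leq 2^{-\abs{A}}$.

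The step I expect to be the main obstacle is the passage in the second paragraph: justifying the interchange of the pointwise inequality \eqref{eq:comparison} with the long-time average, and replacing the time average of $\langle m_s,\eta_{(A-R)+i}\rangle$ by $\langle\bar m,\eta_{(A-R)+i}\rangle$, which is exactly where the strong convergence \eqref{eq: limiting measure} (rather than a mere Ces\`aro statement) is essential. Conceptually, the reason the final bound is the value $2^{-\abs{A}}$ of the density-$\tfrac12$ Bernoulli measure, and not something smaller, is that $\bar m$ is \emph{not} translation invariant: it keeps boundary structure near the origin and relaxes to $\mu_{1/2}$ far to the right, so it is precisely the spatial averaging that erases the boundary and produces $2^{-\abs{A}}$ through \eqref{eq:key estimates for Q}.
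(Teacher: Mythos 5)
Your proof is correct and follows essentially the same route as the paper: the comparison inequality \eqref{eq:comparison} is transported through the time average (via the convergence \eqref{eq: limiting measure}) and then through the spatial Ces\`aro average, where \eqref{eq:key estimates for Q} produces the bound $2^{-\abs{A}}$. The only cosmetic difference is the last step: the paper first proves the bound for sets of the form $A+R$ with $A\subset\mathbb{Z}_+$ and then invokes the translation invariance of $\nu^*$ (Lemma \ref{lm: Bernoulli measure}) to cover arbitrary finite $A\subset\mathbb{Z}$, whereas you absorb the shift into the Ces\`aro sum and discard the finitely many indices $i$ with $(A-R)+i\not\subset\mathbb{Z}_+$; both are valid.
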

	
	\begin{proof}
		Consider some weak limit $\bar{\nu}$ of the means of the empirical measure for the P-process defined by \eqref{eq:Cesaro mean of nu}. By \eqref{eq:comparison} and \eqref{eq: limiting measure}, we have, for any $A \subset \mathbb{Z}_+$
		\[\langle \bar{\nu},\prod_{x\in A}\eta_{x+R}\rangle \leq \langle \bar{m}_,\prod_{x\in A}\eta_x\rangle  \] 
		Therefore, for $i>0$,
		\[\langle \tau_i \bar{\nu},\prod_{x\in A}\eta_{x+R}\rangle = \langle \bar{\nu},\prod_{x\in A}\eta_{x+R+i}\rangle \leq \langle \bar{m}_,\prod_{x\in A}\eta_{x+i}\rangle\]
		Therefore, by \eqref{eq:key estimates for Q} and \eqref{eq:Cesaro mean of nu},
		\[
		\langle  \nu^*,\prod_{x\in A}\eta_{x+R}\rangle \leq \lim_{N_k\to \infty}\frac{1}{N_k}\sum_{i=1}^{N_k}\langle \bar{m},\prod_{x\in A}\eta_{x+i} \rangle = 2^{-\abs{A}}.  \]
		We extend the inequality to any subset $A$ of $\mathbb{Z}$ since $v^*$ is translational invariant by  Lemma \ref{lm: Bernoulli measure}.
	\end{proof}
	
		\section{Proofs of Theorem \ref{Thm:ballistic behavior for left moving tagged particle} and Theorem \ref{Thm:positive speed for slow moving tagged particle}} \label{sec:proof of main}	
		In this section, we prove \textcolor{black}{Theorem \ref{Thm:ballistic behavior for left moving tagged particle} and Theorem \ref{Thm:positive speed for slow moving tagged particle}. Let's start with the proof Theorem \ref{Thm:positive speed for slow moving tagged particle}, and we will see the proof of Theorem \ref{Thm:ballistic behavior for left moving tagged particle} follow similar arguments.}
		

		%
		%
		
		\begin{proof}(Theorem \ref{Thm:positive speed for slow moving tagged particle})
			We divide the proof into two steps.
			\begin{enumerate}[label = Step\arabic*.]
				\item Existence of $q(\cdot)$ and ergodic measure $\nu_e$ for the environment process $\xi_t$:
				
				By Theorem \ref{thm:positive current when it is not fully bocked}, we can define $C_1:= \liminf_{t\to \infty}\frac{1}{t}\mathbb{E}^{\mu_{1,0},0}\left[N_t\right] >0 $. Then by Theorem \ref{thm:replacing estimate}, for any nearest-neighbor $q(\cdot)$, we have \textcolor{black}{$C_0 := C_1 - (q(1)+q(-1))$}, such that
				\[\liminf_{t\to \infty}\frac{1}{t}\mathbb{E}^{\mu_{1,0},q}\left[N_t\right]\geq C_0.\]
				As a consequence, by Lemma \ref{prop:displacement and spped of tagged}, there is an invariant measure $\bar{\nu}$ for the environment process $\xi_t$, such that 
				\begin{align} \label{key bound}  
				\liminf_{t\to \infty}\mathbb{E}^{\mu_{1,0},q}\left[\frac{D_{t}}{t}\right] = \langle \bar{\nu}, f \rangle \geq  \frac{q(1)}{p(2)}C_0 - (q(-1)-q(1)), 				\end{align} where 
				\begin{equation*}
					f(\xi) = q(-1)(1-\xi_1) - q(1)(1-\xi_{-1}).
				\end{equation*} We can choose $q(-1) > q(1)$, to obtain a strict positive lower bound for \eqref{key bound}.
				
				On the other hand, \textcolor{black}{the set of invariant measures is a nonempty closed convex compact set by tightness. There exists an invariant measures satisfying \eqref{key bound}, which is a convex combination of two extremal points. Therefore, there is an extremal point $\nu_e$ of the set of invariant measures, such that, $\nu_e$ is ergodic for $\xi_t$, and $\nu_e$ satisfies \eqref{key bound}
				\begin{equation}\langle \nu_e, f \rangle \geq \frac{q(1)}{p(2)}C_0 - (q(-1)-q(1)) >0. \label{eq: last lower bound estimate2}\end{equation}}
				
				\item The speed of the tagged particle is positive:
				
				We can use $\mathbb{P}^{\nu_e,p}-$ martingales, (see Chapter 6.2 \cite{KLO}) 
				\[M_t = D_t- \int_{0}^{t}f(\xi_s)\,ds,\]
				where $M_t$ is a martingale with quadratic variance of order $t$.
				As $\nu_e$ is invariant and ergodic for the environment process $\xi_t$, we apply Ergodic Theorem, and get 
				\begin{equation}
				\lim_{t\to\infty} \frac{D_t}{t} = \langle \nu_e,f \rangle >0,\quad  \mathbb{P}^{\nu_e,q} -a.s 
				\end{equation} 
				
			\end{enumerate}
		\end{proof}	
		
	\textcolor{black}{In the case when the tagged particle only has pure left jumps, following arguments in Step 2 of the above proof, we only need to show that $\limsup_{t\to\infty}\frac{1}{t}\int_{0}^{t}f(\xi_s)\,ds \leq c$ for some $c<0$. By couplings introduced in Section \ref{sec:coupling}, Theorem \ref{thm:positive current when it is not fully bocked} and Remark \ref{rm: similar proofs}.2, we can obtain this bound.}
	
	\begin{proof}(Theorem \ref{Thm:ballistic behavior for left moving tagged particle})
			Following the proof of Theorem \ref{thm:replacing estimate}, we have a coupling 
			\[\vec{X}_t=(\vec{X}_0,\tilde{L}_R,p,q) \succeq ( \vec{X}_0,\tilde{L},p,0) = \vec{Y}_t, \]
			which implies under some joint distribution, we have
			\eqref{ineq:index change1}
			\[
				F(\vec{X}_0)- F(\vec{X}_t) \geq F(\vec{Y_0})-F(\vec{Y}_t),  \text{a.s.} \] 
			As the tagged particle only jumps towards left, \textcolor{black}{$r_{\vec{X}}(t)=0$. By \eqref{ineq:index change1}, \eqref{eq:change in index_x}, and  \eqref{eq:change in index_y},} we get
				\begin{equation*}
				N_{\vec{X}}(t) \geq N_{\vec{Y}}(t) \quad \text{ a.s.} \end{equation*} 
				Then by Remark \ref{rm: similar proofs}.2 and Theorem \ref{thm:positive current when it is not fully bocked}, we have the following estimate
					\begin{equation} \label{key estimate}
					\liminf_{t\to\infty}	\frac{1}{t}N_{\vec{X}}(t) \geq \liminf_{t\to\infty}	\frac{1}{t}N_{\vec{Y}}(t) = C_1, \text{a.s.} \end{equation}
			
			On the other hand, for the environment process of AEP with a driven tagged particle, we compute $L\eta_{-1} + C_{-1,1}$ by \eqref{Eq:generator} and \eqref{eq:current}:
			\begin{align}
				L\eta_{-1} + C_{-1,1} =& \left(p(2) \eta_{-3}(1-\eta_{-1}) + p(1)\eta_{-2}(1-\eta_{-1})-p(2)\eta_{-1}(1-\eta_1) +q(-1)(1-\eta_{-1})\right) \notag \\
				&+ \left( p(2)\eta_{-1}(1-\eta_1) - p(-2)\eta_{1}(1-\eta_{-1}) \right) \notag\\
				=& (1-\eta_{-1})\left(p(2)\eta_{-3}+p(1)\eta_{-2}+q(-1) -p(-2)\eta_{-1} \right)  \notag \\
				\leq& (1-\eta_{-1})\cdot C_4= - \frac{C_4}{q(-1)} f(\xi), \label{eq:rewrite compensentor}
			\end{align}
			where $C_4= p(2)+p(1)+q(-1)$, and $f(\xi) = -q(-1)(1-\xi_{-1})$.
			
			We can use three $\mathbb{P}^{\mu_{1,0},p}-$ martingales, (see Chapter 6.2 \cite{KLO})
			\begin{align*}
			 \quad N_t - \int_{0}^{t} C_{-1,1}(\xi_s)\,ds ,\quad 
				\xi_t(-1) -\int_{0}^{t} L\eta_{-1}(\xi_s)\,ds,\quad 	D_t - \int_{0}^{t} f(\xi_s)\,ds,
			\end{align*}
			which all have quadratic variance of order $t$. Dividing $t$ and taking limits, we see from \eqref{key estimate} and $\abs{\xi_t(-1)} \leq 1$ that, $\mathbb{P}^{\mu_{1,0},q}$-a.s.,
			\begin{align*}
				\liminf_{t\to \infty}\frac{1}{t}\int_{0}^{t} C_{-1,1}(\xi_s)\,ds &\geq \liminf_{t\to \infty}\frac{1}{t} N_{\vec{X}}(t) \geq C_1,\\
				\lim_{t\to \infty}\frac{1}{t}\int_{0}^{t} L\eta_{-1}(\xi_s)\,ds &= \lim_{t\to \infty}\frac{1}{t} \left(\xi_t(-1)-\xi_0(-1)\right) =0.
			\end{align*} By \eqref{eq:rewrite compensentor}, we get
			\[ \limsup_{t\to \infty}\frac{1}{t}\int_{0}^{t} f(\xi_s)\,ds \leq - C_1 \frac{q(-1)}{q(-1)+p(1)+p(2)},\quad  \mathbb{P}^{\mu_{1,0},q}-\text{a.s.} \]
			Choosing $c:= -C_1 \frac{q(-1)}{q(-1)+p(1)+p(2)}<0$, we obtain
			\[ \limsup_{t\to \infty} \frac{D_t}{t} =\limsup_{t\to \infty}\frac{1}{t}\int_{0}^{t} f(\xi_s)\,ds \leq c <0 ,\quad  \textcolor{black}{\mathbb{P}^{\mu_{1,0},q}}\--a.s. \]
	\end{proof}
	
	\section{Ballistic Behavior of a Fast Tagged Particle in AEP \label{sec:Lower bounds for a fast tagged particle}}
	
	In this section, we will prove Theorem \ref{Thm:ballistic behavior for fast moving tagged particle}. In this case, both the green tagged particle and red particles have non-nearest-neighbor jump rates and the means of jump rates are positive. This is a scenario different from Theorem \ref{Thm:positive speed for slow moving tagged particle}. Particularly, the jump rate $\beta = \sum_z q(z)$ of the tagged particle can be larger than the jump rate $\lambda = \sum_z p(z)$ of red particles. 
	
	We briefly discuss the steps of the proof. \textcolor{black}{We will modify the auxiliary process introduced in Section \ref{sec:coupling}. Instead of labeling red particles and considering their positions relative to the tagged particle, we will also label the tagged particle, and keep track of its label (see \eqref{eq:modified auxiliary process}). With this modified auxiliary process, we can couple the ordered particles (including the tagged particle) in the AEP with the ordered particles in the usual AEP. By investigating the change in labels of tagged particles in both processes, we can compare their positions. We obtain a lower bound for the driven tagged particle in AEP by estimates from the usual AEP}.
	
	\subsection{Assumptions and Labels of the Tagged Particle}
	Let's recall assumptions on jump rates $p(\cdot),q(\cdot)$. 
	\begin{enumerate}[label=A''\arabic{*}] 
		\item (Supports) $p(\cdot)$ has a support on $-2,-1,1$; $q(\cdot)$ has a support on $-1,1,2$, \label{Asmp:support} 
		
		\item \textcolor{black}{(Radially decreasing)} $ p(-1) \geq p(-2)$, $q(1) \geq q(2)>0$,  \label{Asmp:attractiveness3}
		
		\item (Dominance and Positive) \label{Asmp:Dominance and Positive}	$q(1) \geq p(1), q(-1) \leq p(-1)$, $w = \sum_z z\cdot p(z) >0$.
	\end{enumerate}
	These conditions imply the tagged particle moves "faster" than a red particle, \textcolor{black}{and red particles starting from the left of the tagged particle always remains to the left of the tagged particle.
	Consider an AEP with a driven tagged particle, we label particles in an ascending order and also keep track of the label of tagged particle. We get a modified auxiliary process $(\vec{X}_t, I_t)=(\vec{X}_0,p,q, I_t)=\left(\left(X_i(t)\right)_{i\in \mathbb{Z}},I_t\right)$. Its generator $\hat{L}_{p,q}
$ is given by its action on a local function $F$, 
\begin{align} \label{eq:modified auxiliary process}
	\hat{L}_{p,q}F(\vec{X},I) =& \sum_{i\neq I,z\in\mathbb{Z}}p(z)\mathbb{1}_{A_{i,z}}(\vec{X})\left[F(T_{i,z}\vec{X},\hat{I}_{i,z}(\vec{X},I))-F(\vec{X},I)\right]  \notag \\
	&+ \sum_z q(z) \mathbb{1}_{A_{I,z}}(\vec{(X)})\left[F(T_{i,z}\vec{X},I_{I,z}(\vec{X}))-F(\vec{X},I)\right]  
\end{align}where $T_{i,z}\vec{X}$ is defined by \eqref{auxiliary exchange},\eqref{eq: reverse T_i,z}, and $\hat{I}_{i,z}(\vec{X},I)$ is defined as
\begin{equation} \hat{I}_{i,z}(\vec{X},I) =\begin{cases}
I-1, &\text{ if } X_i<X_I<X_i+z \\
I+1, &\text{ if } X_i+z<X_I<X_i \\
I_{i,z}(\vec{X}), &\text{ if } i=I, \\
I, &\text{ else. }
\end{cases} \label{eq: modified index change}
\end{equation}  
}	
	\textcolor{black}{For an AEP with a usual tagged particle, we get a second auxiliary process $(\vec{Y}_t, i_t)=(\vec{Y}_0,p,p, i_t)$ with a generator $\hat{L}_{p,p}$. For convenience, we let initial configuration be the same for both processes, and label the tagged particles with $0$, ie.
	\begin{equation}
	I_0 = i_0 =0
	\end{equation} }

	\textcolor{black}{The first lemma says that we can couple two modified auxiliary processes $(\vec{X}_t,I_t)$ and $(\vec{Y}_t,i_t)$ in the sensein the sense similar to Definition \ref{def:coupling}., for all $t\geq 0$, 
		\begin{equation} \label{eq: ordering}
		X_i(t) \geq Y_i(t), \text{ for all } i \text{ in } \mathbb{Z}.
		\end{equation} 
	\begin{lemma}\label{lemma: coupling of auxiliary process in abs frame} Suppose $p(\cdot), q(\cdot)$ satisfy  \ref{Asmp:support}, \ref{Asmp:attractiveness3} and \ref{Asmp:Dominance and Positive}. For two modified auxiliary processes $(\vec{X}_t,I_t)$ and $(\vec{Y}_t, i_t)$ with generators $\hat{L}_{p,q}$ and $\hat{L}_{p,p}$, there is a joint $\Omega$, such that if \eqref{eq: ordering} holds for $t=0$, we have \eqref{eq: ordering} holds for all $t>0$, and the marginal condition holds
		\begin{align*} \Omega F_1\left(\vec{X},I,\vec{Y},i\right) = \hat{L}_{p,q} H_1\left(\vec{X},I\right),  \\
		\Omega F_2\left(\vec{X},I,\vec{Y},i\right) = \hat{L}_{p,p} H_2\left(\vec{Y},i\right), 
		\end{align*} 
		for any local functions $F_1\left(\vec{X},I,\vec{Y},i\right)=H_1\left(\vec{X},I\right)$ and $F_2\left(\vec{X},I,\vec{Y},i\right)=H_2\left(\vec{Y},i\right)$.
	\end{lemma}
	\begin{proof}
		This is proved in Corollary \ref{cor: coupling of maprocess}. In this case, we have $R=2$.
	\end{proof}
}

	\textcolor{black}{The second lemma gives estimates of $I_t$ and $i_t$ with respect to Bernoulli initial measure $\mu_\rho$.}
	\begin{lemma}\label{lemma: change of indices} Suppose $p(\cdot), q(\cdot)$ satisfy  \ref{Asmp:support}, \ref{Asmp:attractiveness3} and \ref{Asmp:Dominance and Positive}. Let$I_0= i_0= 0$, and $\vec{X}_0$ correspond to the initial Bernoulli product measure $\mu_\rho$. The \textcolor{black}{labels} $I_t$, $i_t$ of the tagged particles in \textcolor{black}{the modified processes $(\vec{X}_t,I_t)=(\vec{X}_0,p,q,I_t)$ and $(\vec{Y}_t,i_t)=(\vec{X}_0,p,p,i_t)$} satisfy,
		\[ \liminf_{t\to \infty} \frac{I_t}{t} \geq 0, \  \mathbb{P}^{\mu_\rho,q}\--a.s. \]
		and
		\[ \lim_{t\to \infty} \frac{i_t}{t} = 0, \  \mathbb{P}^{\mu_\rho,p}\--a.s. \]
	\end{lemma} 
	\begin{proof}
		Notice that $i_t$ is identical to the integrated current $-N_t$ \textcolor{black}{through} bond $(-1,1)$ in the environment process $\xi_t$. \textcolor{black} {By $\mathbb{P}^{\mu_\rho,p}$- martingale $N_t-\int_0^t C_{-1,1}(\xi_s)\,ds$,
		\[\mathbb{E}^{\mu_\rho,p}\left[\frac{i_t}{t}\right] =\mathbb{E}^{\mu_\rho,p}\left[\frac{-N_t}{t}\right] = \mathbb{E}^{\mu_\rho,p}\left[\frac{1}{t} \int_{0}^{t} -C_{-1,1}(\xi_s)\,ds\right] = 0,\]
		where 
		\[ C_{-1,1} = -p(-2)\xi_{1}(1-\xi_{-1})+q(-2) \xi_{-1}(1-\xi_{-2}) + p(2)\xi_{-1}(1-\xi_{1}) -q(2) \xi_{1}(1-\xi_{2}).\] The last expectation is zero because $p(\cdot)=q(\cdot)$. As the Bernoulli measure $\mu_\rho$ is ergodic for $\xi_t$,
			\[ \lim_{t\to \infty} \frac{i_t}{t} =\mathbb{E}^{\mu_\rho,p}\left[\frac{i_t}{t}\right]= 0, \  \mathbb{P}^{\mu_\rho,p}\--a.s. \]}
			
		For $I_t$, the jump rates indicate ${I_t = -N_t\geq 0}$. \textcolor{black}{ See the second point in Remark \ref{rm: general jumprates}.}
	\end{proof}
	
	\subsection{Proof of Theorem \ref{Thm:ballistic behavior for fast moving tagged particle}}
	Now we can prove Theorem \ref{Thm:ballistic behavior for fast moving tagged particle}.
	
	\begin{proof}(Theorem \ref{Thm:ballistic behavior for fast moving tagged particle})
		By Lemma \ref{lemma: coupling of auxiliary process in abs frame}, there is a joint distribution $\mathbb{P}$, and we have $\vec{X}_t \geq \vec{Y}_t,\ \mathbb{P}\--a.s.$ Particularly, $X_{I_t} \geq Y_{I_t},\ \mathbb{P}\--a.s.$
		
		On the other hand, by Lemma \ref{lemma: change of indices}, under the joint distribution $\mathbb{P}$, \textcolor{black}{ which has marginal distributions $\mathbb{P}^{\mu_\rho,q}$ and $\mathbb{P}^{\mu_\rho,p}$,}
		\[ \liminf_{t\to \infty} \frac{I_t-i_t}{t} \geq 0, \  \mathbb{P}\--a.s.  \] 
		
		Fix any $\delta >0$, $I_t \geq \lfloor i_t- \delta \cdot t \rfloor $ for large $t$, so $Y_{I_t} \geq Y_{\lfloor i_t-\delta\cdot t\rfloor}$. Consider $Y_{i_t}-Y_{\lfloor i_t-\delta\cdot t\rfloor}$. \textcolor{black}{Since the Bernoulli product measure $\mu_\rho$ is an ergodic measure for the environment process,} $Y_{i_t}-Y_{\lfloor i_t-\delta\cdot t\rfloor}$ is dominated by the sum of $\lceil \delta \cdot t \rceil$ independent geometric random variables with parameter $\rho$. \textcolor{black}{We therefore get}
		\[ \limsup_{n\to \infty} \frac{Y_{i_{t_n}}-Y_{\lfloor i_{t_n}-\delta\cdot t_n\rfloor}}{t_n} \leq \frac{ \delta}{\rho}, \  \mathbb{P}\--a.s.\] for some sequence $t_n = \frac{n}{2^{k}} \uparrow \infty$. A standard interpolation argument enables us to replace "$t_n \uparrow \infty$" by "$t\uparrow \infty$".
		
		 With \textcolor{black}{the law of large numbers for a tagged particle in usual AEP, $\lim_{t\to\infty}\frac{Y_{i_{t}}}{t} = w\cdot (1-\rho) $},  we have
		\[ \liminf_{t\to \infty} \frac{Y_{I_t}}{t} \geq \liminf_{t\to \infty} \frac{Y_{\lfloor i_t-\delta\cdot t\rfloor}}{t}\geq w\cdot (1-\rho) - \frac{\delta}{\rho}, \  \mathbb{P}\--a.s.\] 
		where $\textcolor{black}{w = \sum_z z\cdot p(z)}  > 0$. This is sufficient to get Theorem \ref{Thm:ballistic behavior for fast moving tagged particle} since $X_{I_t} \geq Y_{I_t}$.
	\end{proof}

\appendix

\section{APPENDIX} \label{sec: existence of coupling}
	The \textcolor{black}{generator for the coupled process in Theorem \ref{thm:coupling} is long} and consists of several parts. We shall first prove three lemmas. Particularly, Lemma \ref{lm: coupling preserve i-th} is the main step towards the construction of the coupling.
	
	Firstly, we observe that these are jump processes. Because the generators are additive for the same type of $G =\tilde{L}, \tilde{L}_L,$ or $\tilde{L}_R$, we can combine two pairs of coupled processes \textcolor{black}{, in the sense of adding their generators,} to obtain a new pair. \textcolor{black}{The main requirement is that couplings exist for any ordered deterministic initial configurations.}
	
	\begin{lemma} \label{lm:combining two pairs}
		Suppose we have two joint generators $\Omega_1$, $\Omega_2$, for two pairs of auxiliary processes. \textcolor{black}{If under $\Omega_1$, $\Omega_2$, the two pairs of} auxiliary processes satisfy Definition \ref{def:coupling} \textcolor{black}{for} any (deterministic) $\vec{W}_0 \geq \vec{X_0}$, that is,
		\[ \vec{W}_t \succeq\vec{X}_t, \vec{Y}_t \succeq \vec{Z}_t, \]
		where 
		\[\vec{W}_t = (\vec{W}_0,G,p_1,q_1),\vec{X}_t = (\vec{X}_0,G',p_2,q_2)\]
		and
		\[\vec{Y}_t = (\vec{W}_0,G,p'_1,q'_1),\vec{Z}_t = (\vec{X}_0,G',p'_2,q'_2).\] 
		Then, the combined auxiliary processes $\vec{U}_t, \vec{V}_t$, are also coupled via the joint generator $\Omega = \Omega_1 + \Omega_2$ that is:
		\[\vec{U}_t \succeq\vec{V}_t,\]
		where \textcolor{black}{
		\[\vec{U}_t = (\vec{W}_0,G,p_1+p'_1,q_1+q'_1),\vec{V}_t = (\vec{X}_0,G',p_2+p'_2,q_2+q'_2).\]} 
		\textcolor{black}{We can use either $p(\cdot)$ or $p(\cdot,\cdot)$ in this context, and generators $G$, $G'$ can be the same.}
	\end{lemma}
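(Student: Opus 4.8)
The plan is to reduce everything to two structural features of the model: the auxiliary generators $\tilde{L},\tilde{L}_L,\tilde{L}_R$ depend \emph{linearly} on the jump-rate kernels $p$ and $q$, and the coupling requirement in Definition~\ref{def:coupling} is a pathwise order which, for pure-jump processes, amounts to invariance of the ordered cone $\mathcal{K}=\{(\vec{W},\vec{V}):\vec{W}\geq\vec{V}\}$ under the joint dynamics. Granting these, the assertion follows at once from the additivity $\Omega=\Omega_1+\Omega_2$.

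First I would check the two marginals. Write $G^{(p,q)}$ for the generator of the common type $G$ (one of $\tilde{L},\tilde{L}_L,\tilde{L}_R$) with rates $p,q$, and similarly $G'$ for the second coordinate. Fix a local function $H_1$ and set $F_1(\vec{W},\vec{V})=H_1(\vec{W})$. Condition~2 of Definition~\ref{def:coupling} applied to $\Omega_1$ and to $\Omega_2$ gives $\Omega_1F_1=G^{(p_1,q_1)}H_1$ and $\Omega_2F_1=G^{(p_1',q_1')}H_1$. Inspecting the explicit formulas \eqref{Eq:auxiliary generator}, \eqref{Eq:auxiliary generator-Right} and \eqref{Eq:auxiliary generator-Left}, every transition term carries exactly one factor $p(X_i,X_i+z)$ or $q(y)$, so $G^{(p,q)}$ is additive in $(p,q)$, and therefore
\[
\Omega F_1=\Omega_1F_1+\Omega_2F_1=G^{(p_1+p_1',\,q_1+q_1')}H_1,
\]
which is precisely the generator of $\vec{U}_t=(\vec{W}_0,G,p_1+p_1',q_1+q_1')$. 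The same computation with $F_2(\vec{W},\vec{V})=H_2(\vec{V})$ identifies the second marginal as $\vec{V}_t=(\vec{X}_0,G',p_2+p_2',q_2+q_2')$.

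Next I would establish the order. Condition~1 of Definition~\ref{def:coupling} for $\Omega_1$ and $\Omega_2$ says that, started in $\mathcal{K}$, each coupled process stays in $\mathcal{K}$ almost surely; since both are pure-jump processes with locally finite rates, this is equivalent to saying that no transition of positive $\Omega_1$- or $\Omega_2$-rate carries a configuration in $\mathcal{K}$ to its complement. The jump kernel of $\Omega=\Omega_1+\Omega_2$ is the sum of the two kernels, so no transition of $\Omega$ leaves $\mathcal{K}$ either. Starting from the deterministic ordered pair $(\vec{U}_0,\vec{V}_0)=(\vec{W}_0,\vec{X}_0)\in\mathcal{K}$, we conclude $\vec{U}_t\geq\vec{V}_t$ for all $t$, almost surely.

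With the matching initial laws this verifies both requirements of Definition~\ref{def:coupling} for $\Omega$, giving $\vec{U}_t\succeq\vec{V}_t$. The only genuinely delicate point is the order step: one must read the almost-sure order condition as invariance of the cone $\mathcal{K}$ under each individual jump mechanism, which is exactly what lets it survive the superposition; the marginal and initial-condition checks are mechanical consequences of the linearity of the generators in their rates. As noted in the statement, the argument is insensitive to whether one writes $p(\cdot)$ or $p(\cdot,\cdot)$.
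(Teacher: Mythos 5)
Your proposal is correct and follows essentially the same route as the paper: the marginals are identified by the additivity of the generators $\tilde{L},\tilde{L}_L,\tilde{L}_R$ in the rates $(p,q)$, and the almost-sure order is obtained by noting that the ordered cone $F_0=\{(\vec{X},\vec{Y}):\vec{X}\geq\vec{Y}\}$ is absorbing for each $\Omega_i$ and hence for the sum. The paper expresses your "no jump of positive rate exits the cone" step as the absorbing-set criterion $\Omega\mathbb{1}_{F_0}\geq 0$ (via Theorem 2.5.2 of \cite{KL}), which decomposes additively exactly as in your argument.
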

	\begin{proof}	By assumption, the condition for the marginals is immediate from the forms of the generators \eqref{Eq:auxiliary generator} \eqref{Eq:auxiliary generator-Right} and \eqref{Eq:auxiliary generator-Left}. We need to check the first condition.
		
		By arguments in the proof of Theorem 2.5.2 \cite{KL}, to show $\vec{W}_t \geq \vec{X}_t$, we need to show the closed set $F_0=\{(\vec{X},\vec{Y}):\vec{X} \geq \vec{Y}\}$ is an absorbing set, which can be checked via showing:
		\begin{equation} \label{Cond: obsorbing}
		\Omega\mathbb{1}_{F_0} \geq 0.
		\end{equation}
		\textcolor{black}{Indeed, by martingale $\mathbb{1}_{F_0}(\vec{X}_t,\vec{Y}_t) - \int_0^t \Omega\mathbb{1}_{F_0}(\vec{X}_s,\vec{Y}_s)\,ds$, we get from \eqref{Cond: obsorbing}, for any $t \geq 0$
			\[P(\vec{X}_t\geq \vec{Y}_t) = \mathbb{E}\left[\mathbb{1}_{F_0}(\vec{X}_t,\vec{Y}_t)\right] \geq  \mathbb{E}\left[\mathbb{1}_{F_0}(\vec{X}_0,\vec{Y}_0)\right] =1. \] Usual interpolation arguments allow us to get $P(\vec{X}_t\geq \vec{Y}_t, \text{ for all } t)=1.$}
		
		\textcolor{black}{
		Lastly, by direct computation, we get 
		\[\Omega_1\mathbb{1}_{F_0} \geq 0, \text{ and } \Omega_2\mathbb{1}_{F_0} \geq 0,\] which is sufficient for \eqref{Cond: obsorbing}.}
	\end{proof}
	
	Secondly, we observe four monotone functions on the configuration space by comparing the configurations before and after the tagged particle jump \textcolor{black}{with shifts of labels or not}. See Figure 2,3 for examples.
	\begin{lemma}\label{lm:jumps preserve ordering} Let z>0. If a jump of the tagged particles by $z$ or $-z$ is possible, we have
		\begin{align}
		\Theta_{-z}\vec{X} \geq \vec{X} ,&\quad	S_z\circ\Theta_z\vec{X} \geq \vec{X} \label{eq:increasing with tagged jumps}	 \\
		\Theta_{z}\vec{X} \leq \vec{X}	,& \quad S_{-z}\circ\Theta_{-z}\vec{X} \leq \vec{X}
		\end{align}
	As a consequence, there are two generators  $\Omega_{0,R}$ and $\Omega_{0,L}$, such that for any  $\vec{X}_0 \geq \vec{Y}_0$, we can couple $\vec{X_t}=(\vec{X}_0,\tilde{L}_R,0,q) \succeq \vec{Y_t}=(\vec{Y}_0,\tilde{L}_R,0,0)$  via $\Omega_{0,R}$, and couple $\vec{W_t}=(\vec{X}_0,\tilde{L},0,0) \succeq \vec{Z_t}=(\vec{Y}_0,\tilde{L}_R,0,q)$  via $\Omega_{0,L}.$ 
	\end{lemma}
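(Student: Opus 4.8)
The plan is to reduce all four displacement inequalities to a single elementary monotonicity property of ascending integer configurations, and then to read off the two couplings directly, exploiting that the parameter $p=0$ suppresses every red-particle jump while $q=0$ freezes the tagged particle entirely.

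First I would record the key fact that for any $\vec{X}\in\hat{\mathbb{X}}$ satisfying \eqref{Cond:ascending inital configuration} and any integer $z\geq 0$ one has $X_{i+z}\geq X_i+z$ for every $i$. This follows by induction from $X_{i+1}\geq X_i+1$, which holds because the $X_i$ are strictly increasing integers (the values $\pm\infty$ are handled trivially by monotonicity). Granting this, the four inequalities are immediate from \eqref{auxiliary translation} and \eqref{shift labels} with $z>0$: one has $(\Theta_{-z}\vec{X})_j=X_j+z\geq X_j$ and $(\Theta_z\vec{X})_j=X_j-z\leq X_j$ at once, while $(S_z\circ\Theta_z\vec{X})_j=X_{j+z}-z\geq X_j$ and $(S_{-z}\circ\Theta_{-z}\vec{X})_j=X_{j-z}+z\leq X_j$ are exactly the key inequality applied at index $j$ and at index $j-z$ respectively. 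Consequently every tagged transition appearing in $\tilde{L}_R$ raises the configuration in the order \eqref{eq:partial oder} (left jumps through $\Theta_{-z}$, right jumps through $S_z\circ\Theta_z$), whereas every tagged transition in $\tilde{L}_L$ lowers it.

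With this monotonicity in hand I would take $\Omega_{0,R}$ and $\Omega_{0,L}$ to be the product (independent) joint generators. For $\Omega_{0,R}$, let the first coordinate evolve under $(\tilde{L}_R,0,q)$ and give the second coordinate the zero generator $(\tilde{L}_R,0,0)$, so that $\vec{Y}_t\equiv\vec{Y}_0$ stays frozen; the marginal requirements in Definition \ref{def:coupling} then hold by inspection. Since $\vec{X}_0\geq\vec{Y}_0$ and, by the previous step, each admissible jump of the active first coordinate only increases it, the ordering $\vec{W}_t\geq\vec{V}_t$ persists for all time. Formally I would verify the absorbing-set criterion $\Omega_{0,R}\mathbb{1}_{F_0}\geq 0$ on $F_0=\{(\vec{X},\vec{Y}):\vec{X}\geq\vec{Y}\}$ exactly as in the proof of Lemma \ref{lm:combining two pairs} (Theorem 2.5.2 of \cite{KL}): at any point of $F_0$ each jump of the first coordinate lands again in $F_0$, because it raises $\vec{X}$ while $\vec{Y}$ is unchanged. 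The generator $\Omega_{0,L}$ is built symmetrically, freezing the first coordinate under $(\tilde{L},0,0)$ and running the second coordinate under the left-relabeling generator $\tilde{L}_L$ with rate $q$ (whose tagged jumps are decreasing); there every jump of the active lower coordinate decreases it, so starting from $\vec{X}_0\geq\vec{Y}_0$ the order is preserved in the same way.

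The computations are routine; the one conceptual point requiring care is the interaction of the relabeling operators $S_{\pm z}$ with the partial order, since a priori a shift of labels could destroy monotonicity. This is resolved precisely by the key inequality $X_{i+z}\geq X_i+z$: relabeling by $z$ after translating by $z$ is harmless because an ascending integer configuration is already ``at least as spread out'' as the shift demands. A secondary bookkeeping point is to check that the vanishing rates induced by $q=0$ (respectively $p=0$) really do freeze the intended marginal and reproduce the stated marginal generators, and that the admissibility indicators $B_y$, which depend only on the active coordinate, never trigger a synchronized move that could break the order; since the frozen coordinate never jumps, no synchronization is required and this is automatic.
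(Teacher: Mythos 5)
Your proof is correct and follows essentially the same route as the paper's: the same pointwise inequalities $(\Theta_{-z}\vec{X})_i = X_i+z \geq X_i$ and $(S_z\circ\Theta_z\vec{X})_i = X_{i+z}-z \geq X_i$ (you make explicit the underlying fact $X_{i+z}\geq X_i+z$, which the paper leaves implicit), followed by the same one-sided coupling generator in which only one coordinate moves and the order is preserved because all of its admissible jumps are monotone, verified via the absorbing-set criterion from Theorem 2.5.2 of \cite{KL}. Your use of $\tilde{L}_L$ for the second coupling, in place of the $\tilde{L}_R$ appearing in the lemma's statement, matches the paper's evident intent (cf.\ Theorem \ref{thm:coupling}), so this is a faithful correction rather than a deviation.
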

	\begin{proof} \textcolor{black}{We will prove equations \eqref{eq:increasing with tagged jumps} and define $\Omega_{0,R}$, via which we can couple two auxiliary processes $\vec{X_t}=(\vec{X}_0,\tilde{L}_R,0,q) \succeq \vec{Y_t}=(\vec{Y}_0,\tilde{L}_R,0,0)$ for any initial $\vec{X}_0 \geq \vec{Y}_0$. The other case is similar.}
		
		By \eqref{auxiliary translation} and \eqref{shift labels}, \textcolor{black}{we check coordinates},  	
		\begin{align}
		(\Theta_{-z}\vec{X})_i = X_i + z \geq X_i \notag\\ 
		(S_z\circ\Theta_z\vec{X})_i = X_{i+z}-z\geq X_i		
		\end{align}
		Then it is easy to see the generator $\Omega_{0,R}$  \textcolor{black}{ defined below works}, since under this generator, \textcolor{black}{$\vec{X}_t$ is increasing in $t$ while $\vec{Y}_t$ is constant in $t$},
		\begin{align}
		\Omega_{0,R}F(\vec{X},\vec{Y}) =& \tilde{L}_R F(\cdot,\vec{Y})\left[\vec{X}\right] \notag \\
		=  &\sum_{y<0}q(y)\mathbb{1}_{B_{y}}(\vec{X})\left[F(\Theta_y\vec{X},\vec{Y})-F(\vec{X},\vec{Y})\right] \notag \\
		+&\sum_{y>0}q(y)\mathbb{1}_{B_{y}}(\vec{X})\left[F(S_y\circ\Theta_y\vec{X},\vec{Y})-F(\vec{X},\vec{Y})\right].
		\end{align}
		
	\end{proof}
	
	Thirdly, we see that given $\vec{X} \geq \vec{Y}$, whenever the i-th particle in $\vec{Y}$ jump by $z>0$, we can move the i-th particle in $\vec{X}$ by $z' \geq 0$, \textcolor{black}{such that ordering is preserved after relabeling, $T_{i,z'}\vec{X}\geq T_{i,z}\vec{Y}$}.  \textcolor{black}{This is the primary step for constructing couplings in Theorem \ref{Thm: main step}. Once we can couple positive jumps of the i-th particle in the slower process by positive jumps of its corresponding particle in the faster process, we only need to assign jump rates according to different pairs $z,z'$. The assignment is possible by Assumptions \ref{Asmp:attractiveness2}, \ref{Asmp:finite-range2}.} See \eqref{def:p_i,s,z}, \eqref{def:p_i,s,0} in Theorem \eqref{Thm: main step} for assignment in detail.
	\begin{lemma}\label{lm: coupling preserve i-th}
		Assume $\vec{X} \geq \vec{Y}$, $R \geq z>0$, and \textcolor{black}{$\vec{Y}\in A_{i,z}$}, then there is a $z' \geq 0$ depending on $\vec{X},\vec{Y}$,$i$,and $z$, such that \textcolor{black}{$\max\{Y_i+z,X_i\}\geq X_i+z'\geq \min\{Y_i+z,X_i\}$} and
		\begin{equation}
		\vec{X'}= T_{i,z'}\vec{X} \geq T_{i,z}\vec{Y}=\vec{Y'}
		\end{equation}
		\textcolor{black}{Particularly, any $z'\neq 0$ is obtained from at most one $z$.}
	\end{lemma}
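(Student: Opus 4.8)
The plan is to construct the coupled displacement $z'$ explicitly from $\vec{X},\vec{Y},i,z$, and then to verify both assertions—the cap $X_i+z'\le\min\{Y_i+z,X_i\}$ and the order preservation $T_{i,z'}\vec{X}\ge T_{i,z}\vec{Y}$—by comparing the two relabelled configurations coordinate by coordinate through the explicit formula \eqref{auxiliary exchange}. First I would record that, since jumps are to the right and $\vec{X}\in A_{i,z}$, the $i$-th particle of $\vec{X}$ may be placed at any vacant site in $(X_i,X_i+z]$, and after relabelling it lands at index $I_{i,z'}(\vec{X})=\max\{k:X_k<X_i+z'\}$. I would take $z'$ to be the largest admissible nonnegative displacement with $X_i+z'\le\min\{Y_i+z,X_i\}$; since $z'\ge 0$ forces $X_i+z'\ge X_i$, this selects $z'=0$, so that $T_{i,z'}\vec{X}=\vec{X}$ and the cap holds by construction. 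The whole content of the lemma is therefore the inequality $\vec{X}\ge T_{i,z}\vec{Y}$.

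To establish $\vec{X}\ge T_{i,z}\vec{Y}$ I would set $I:=I_{i,z}(\vec{Y})$ and split the index set into the four ranges $j<i$, $i\le j<I$, $j=I$, and $j>I$, using \eqref{auxiliary exchange}. On the two outer ranges the entries of $T_{i,z}\vec{Y}$ agree with $Y_j$, so $X_j\ge Y_j$ is inherited directly from $\vec{X}\ge\vec{Y}$. At the top index $j=I$ one has $(T_{i,z}\vec{Y})_I=Y_i+z$, and I would verify $X_I\ge Y_i+z$ from the maximality defining $I$ (which places the jump target weakly below $X_{I+1}$) together with the vacancy condition $\vec{X}\in A_{i,z}$, so that the overtaking particle does not rise above the stationary $\vec{X}$-configuration in that slot.

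The middle range $i\le j<I$ is where the relabelling bites and where I expect the main obstacle to lie: there $(T_{i,z}\vec{Y})_j=Y_{j+1}$, so coordinate-wise order demands the strengthened inequalities $X_j\ge Y_{j+1}$, which are not implied by $\vec{X}\ge\vec{Y}$ on their own. This is precisely the point at which the cap $X_i+z'\le\min\{Y_i+z,X_i\}$ and the admissibility $\vec{X}\in A_{i,z}$ must be combined, since the particles overtaken by the $i$-th $\vec{Y}$-particle are exactly those of $\vec{Y}$ lying in $(Y_i,Y_i+z]$, and controlling their new indices against the held $\vec{X}$-entries is the heart of the matter. I would treat this range by a downward induction on $j$ starting from $j=I$, at each step transferring the order already secured at index $j+1$ to index $j$, using that the overtaken $\vec{Y}$-site and the corresponding $\vec{X}$-entry are comparable through the cap and the vacancy at $X_i+z$.

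Once the deterministic per-jump statement is in hand, I would pass to the generator level by summing over the admissible pairs $(i,z)$ with rates $p(z)$ and invoking Lemma \ref{lm:combining two pairs} to add the exchange part to the tagged-particle part; Assumption \ref{Asmp:attractiveness} enters here to guarantee that the paired rates remain compatible with the partial order \eqref{eq:partial oder}. The only genuinely delicate step is the middle-range relabelling argument above, and securing those strengthened inequalities $X_j\ge Y_{j+1}$ is what I expect to consume most of the work; the remaining bookkeeping is routine.
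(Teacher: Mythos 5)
There is a genuine gap, and it sits exactly where you located the ``main obstacle.'' You read the cap $X_i+z'\leq \min\{Y_i+z,\,X_i\}$ literally, concluded that $z'=0$ is forced, and thereby reduced the lemma to the claim $\vec{X}\geq T_{i,z}\vec{Y}$. That reduced claim is false. Take $\vec{X}=\vec{Y}$ (all hypotheses hold): whenever the jump of the $i$-th $\vec{Y}$-particle is nontrivial, formula \eqref{auxiliary exchange} gives $(T_{i,z}\vec{Y})_I=Y_i+z>Y_I=X_I$ at the landing index $I=I_{i,z}(\vec{Y})$, and $(T_{i,z}\vec{Y})_j=Y_{j+1}>Y_j=X_j$ on the middle range; a concrete instance is $Y_i=X_i=0$, $Y_{i+1}=X_{i+1}=5$, $z=2$. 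So the strengthened inequalities $X_j\geq Y_{j+1}$ that you hoped to extract from the cap and the vacancy condition cannot be derived by any downward induction --- they are simply false --- and the same example kills your claim that $X_I\geq Y_i+z$ follows from maximality of $I$. The ``$\min$'' in the statement is a typo for ``$\max$'' (equivalently: either $z'=0$ or $X_i+z'\leq Y_i+z$); this is clear from the paper's own construction, which takes $z'$ to be the \emph{largest} displacement whose target $X_i+z'$ is a vacant site of $\vec{X}$ lying weakly below $Y_i+z$, and sets $z'=0$ only when no such site exists. The coupled $\vec{X}$-particle must genuinely move; freezing it cannot work.

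With the correct construction, the content of the lemma is the case analysis you never reach. When $z'=0$ because $X_i>Y_i+z$, the order is immediate since every relabelled $\vec{Y}$-entry is at most $Y_i+z<X_i$. When $z'$ is small or zero even though $X_i\leq Y_i+z$, the saving fact is a packing property: the absence of a vacant $\vec{X}$-site below $Y_i+z$ means the $\vec{X}$-particles with indices just above $i$ sit solidly up to the site $Y_i+z$, so the $\vec{Y}$-particles overtaken by the jump are matched coordinate-by-coordinate by $\vec{X}$-particles already in place (this is the $I_1,I_2,r$ analysis in the paper's Step 2(b)). Finally, to handle all holes of $\vec{Y}$ within range $R$, and not just the rightmost one, the paper runs an induction over the ordered list of holes, using an insertion operator $K(\cdot,s)$ that adds a particle at a vacant site and relabels, so that the inductive hypothesis applies after the rightmost hole is treated. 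Your outer-range observations and the final generator-level remarks (summing rates, Lemma \ref{lm:combining two pairs}) are fine, but they are the routine part; the moving $z'$, the packing argument, and the hole-by-hole induction are what the lemma actually requires.
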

	\begin{proof}
		We first describe how to find $z'$, and then prove by induction.
		\begin{enumerate}[label=Step \arabic*]
			\item	\textcolor{black}{Suppose there are exactly $k$ holes in $\vec{Y}$ between $Y_i$ and $Y_i+R$. We label them} in a descending order:
			\begin{equation}
			Y_i < \textcolor{black}{Y_i+z_k < Y_i+z_{k-1}} <\dots < Y_i+z_1 \leq Y_i+R
			\end{equation} 
			Define \textcolor{black}{$z'_l, l= 1,2,\dots, \textcolor{black}{k}$} inductively by,
			\begin{align} 
			z'_1 =&\begin{cases} \max\{z'> 0: X_i + z' \leq Y_i +z_1,\vec{X}\in \textcolor{black}{A_{i,z'}}\} &, \text{if exists}\\
			0 &,\text{else}
			\end{cases} \\
			z'_{l+1} =& \begin{cases} \max\{z'_l > z'> 0: X_i + z' \leq Y_i + \textcolor{black}{z_l},\vec{X}\in \textcolor{black}{A_{i,z'}}\} &, \text{if exists}\\
			0 &,\text{else}
			\end{cases}
			\end{align} That is, if $z_l'>0$, $X_i+ z'_l$ is the right-most hole in $\vec{X}$ which is to the left of the (l-1)-th hole in $\vec{X}$ \textcolor{black}{selected so far} and the l-th hole in $\vec{Y}$. See Figure 6 for an example. In this example, $i=0$, $R =8$, $z'_1=7$, $z'_2=3$, $z'_3=1$, $z'_4=0$.
			
			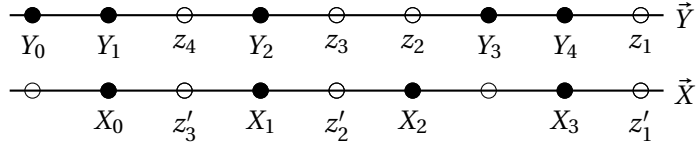
\begin{figure}[h!]
				\begin{center}
					\begin{tikzpicture} 
					\draw[black, thick] (-4.3,0) -- (4.3,0)node[black,right] {$\vec{Y}$};  
					
					\foreach \x in {-4,...,4}
					\draw (-\x,0) circle (0.1);
					\draw [black,fill] (-4,0) circle [radius=0.1] node [black,below=4] {$Y_{0}$}; 
					\draw [black,fill] (-3,0) circle [radius=0.1] node [black,below=4] {$Y_{1}$}; 
					\draw [black,fill] (-1,0) circle [radius=0.1] node [black,below=4] {$Y_{2}$}; 
					\draw [black,fill] (2,0) circle [radius=0.1] node [black,below=4] {$Y_{3}$}; 
					\draw [black,fill] (3,0) circle [radius=0.1] node [black,below=4] {$Y_{4}$}; 
					\draw [black,fill] (4,0) circle; 
					
					\draw (4,0) circle (0.1)node [black,below=4] {$z_{1}$};
					\draw (1,0) circle (0.1)node [black,below=4] {$z_{2}$};
					\draw (0,0) circle (0.1)node [black,below=4] {$z_{3}$};
					\draw (-2,0) circle (0.1)node [black,below=4] {$z_{4}$};

					\draw[black, thick] (-4.3,-1) -- (4.3,-1) node[black,right] {$\vec{X}$};  
					
					\foreach \x in {-4,...,4}
					\draw (-\x,0-1) circle (0.1);
					\draw [black,fill] (-3,0-1) circle [radius=0.1] node [black,below=4] {$X_{0}$}; 
					\draw [black,fill] (-1,0-1) circle [radius=0.1] node [black,below=4] {$X_{1}$}; 
					\draw [black,fill] (1,0-1) circle [radius=0.1] node [black,below=4] {$X_{2}$}; 
					\draw [black,fill] (3,0-1) circle [radius=0.1] node [black,below=4] {$X_{3}$}; 
					\draw (4,0-1) circle (0.1)node [black,below=4] {$z'_{1}$};
					\draw (0,0-1) circle (0.1)node [black,below=4] {$z'_{2}$};
					\draw (-2,0-1) circle (0.1)node [black,below=4] {$z'_{3}$};
					\end{tikzpicture}
				\end{center}
				\caption{Target Sites $z'$ for $X_0$}
			\end{figure}
			
			\item We then show the base case for $l=1$. That is, we want to show $\vec{X'}= T_{i,z_1'}\vec{X} \geq T_{i,z_1}\vec{Y}=\vec{Y'}$ (for any $k>0$).
			\begin{enumerate}
				\item \textcolor{black}{$X_i > Y_i+z_1$} and $z_1' =0$: \textcolor{black}{By \eqref{eq:index of right jump}, \eqref{auxiliary exchange}}, we have, 
				\begin{align*}
				Y'_j = Y_j \leq X_j = X'_j,  &\quad j < i \text{ or } j >I_1,\\
				\textcolor{black}{Y'_j \leq Y'_{I_1} = Y_i+z_1 <X_i \leq X_j = X'_j},& \quad   i\leq j \leq I_1,
				\end{align*}
				\textcolor{black}{where $I_1 = I_{i,z_1}(\vec{Y})= \max\{s: Y_s \leq Y_i+z_1\}$.}
				\item \textcolor{black}{$X_i \leq Y_i+z_1$} and $z_1' \geq 0$: Let $I_1 = I_{i,z_1}(\vec{Y})$, $I_2 = I_{i,z'_1}(\vec{X})$, and $r = \max\{s\geq I_2: X_s \leq \textcolor{black}{Y_i}+ z_1  \}$. We have, $I_2 \leq r\leq  I_1$. Indeed, every particle in $\vec{X}$ between sites $X_i$ and $Y_i+z_1$ corresponds to \textcolor{black}{ a particle in $\vec{Y}$ between sites $Y_i$ and $Y_i+z_1$,
				\[\left\{s: X_i\leq X_s\leq Y_i+z_1\right\} \subset \left\{s: Y_i\leq Y_s\leq Y_i+z_1\right\},\]} and we obtain $r-i+1 \leq I_1-i+1$.  
				
				If $ I_2<r$, \textcolor{black}{ by definition of $z_1'$, we get} $X_r = Y_i+z_1 =Y'_{I_1} $, and 
				\begin{align*}
				Y'_j = Y_j \leq X_j = X'_j,  &\quad j < i \text{ or } j >I_1,\\
				Y'_j = Y_{j+1} \leq X_{j+1} = X'_j,& \quad   i\leq j \leq I_2-1, \\
				Y'_j \leq Y'_{I_1} - (I_1-j) \textcolor{black}{\leq} X_{r}-(r-j) \textcolor{black}{= X'_j},& \quad   I_2\leq j \leq r,\\
				\textcolor{black}{Y'_j \leq Y'_{I_1} = Y_i+z_1 =X_r \leq X'_j},& \quad   r< j \leq I_1.
				\end{align*}
				\textcolor{black}{The last equality in the third case is because there is exactly one hole in $\vec{X}$ (at site $X_i+z_1'$) between sites $X_i+z_1'$ and $X_r$.}
				
				\textcolor{black}{Otherwise, if $I_2 = r$, by definitions of $z_1',r,$ and $I_2$, we have $X'_r= Y_i+z_1 = X_i+z_1' = X'_{I_2}$, and $I_1 =I_2$}. Therefore,
				\begin{align*}
				Y'_j = Y_j \leq X_j = X'_j,  &\quad j < i \text{ or } j >I_1,\\
				Y'_j = Y_{j+1} \leq X_{j+1} = X'_j,& \quad   i\leq j \leq I_1-1, \\
				Y'_{I_1}=  X'_{I_1},& \quad   j =I_1=I_2.
				\end{align*}
				
				One may work with an example, see Figure 7. In this case, $ R =8, i =0, \textcolor{black}{I_1 = 5},I_2=2,r=4$.
				
				\begin{figure}[h!]
					\begin{center}
						\begin{tikzpicture} 
						\draw[black, thick] (-4.3,0) -- (4.3,0)node[black,right] {$\vec{Y}$};  
						
						\foreach \x in {-4,...,4}
						\draw (-\x,0) circle (0.1);
						\draw [black,fill] (-4,0) circle [radius=0.1] node [black,below=4] {$Y_{0}$}; 
						\draw [black,fill] (-3,0) circle [radius=0.1] node [black,below=4] {$Y_{1}$};
						
						\draw [black,fill] (-1,0) circle [radius=0.1] node [black,below=4] {$Y_{2}$}; 
						\draw [black,fill] (0,0) circle [radius=0.1] node [black,below=4] {$Y_{3}$}; 
						\draw [black,fill] (1,0) circle [radius=0.1] node [black,below=4] {$Y_{4}$};
						 \draw [black,fill] (2,0) circle [radius=0.1] node [black,below=4] {$Y_{5}$};
						\draw [black,fill] (4,0) circle [radius=0.1] node [black,below=4] {$Y_{6}$};

						\draw (-2,0) circle (0.1)node [black,below=4] {$z_{2}$};
						\draw (3,0) circle (0.1)node [black,below=4] {$z_{1}$};
						
						\draw[dashed,->] (-4,0.3) to (-4,0.8) to (3,0.8)
						to (3,0.3);

						\draw[black, thick] (-4.3,-1) -- (4.3,-1) node[black,right] {$\vec{X}$};  
						
						\foreach \x in {-4,...,4}
						\draw (-\x,0-1) circle (0.1);
						\draw [black,fill] (-3,0-1) circle [radius=0.1] node [black,below=4] {$X_{0}$}; 
						\draw [black,fill] (-2,0-1) circle [radius=0.1] node [black,below=4] {$X_{1}$}; 
						\draw [black,fill] (0,0-1) circle [radius=0.1] node [black,below=4] {$X_{2}$}; 
						\draw [black,fill] (2,0-1) circle [radius=0.1] node [black,below=4] {$X_{3}$}; 
						\draw [black,fill] (3,0-1) circle [radius=0.1] node [black,below=4] {$X_{4}$}; 
						\draw [black,fill] (4,0-1) circle;
						\draw (1,0-1) circle (0.1)node [black,below=4] {$z'_{1}$};
						\draw[dashed,->] (-3,-1.3-0.3) to (-3,-1.3-0.8) to (1,-1.3-0.8)
						to (1,-1.3-0.3);
						
						\draw[black, thick] (-4.3,-3) -- (4.3,-3)node[black,right] {$T_{0,z_1}\vec{Y}$};  
						
						\foreach \x in {-4,...,4}
						\draw (-\x,-3) circle (0.1);
						\draw [black,fill] (-3,-3) circle [radius=0.1] node [black,below=4] {$Y_{0}$}; 
						\draw [black,fill] (-1,-3) circle [radius=0.1] node [black,below=4] {$Y_{1}$}; 
						\draw [black,fill] (0,-3) circle [radius=0.1] node [black,below=4] {$Y_{2}$}; 
						\draw [black,fill] (1,-3) circle [radius=0.1] node [black,below=4] {$Y_{3}$}; 
						\draw [black,fill] (2,-3) circle [radius=0.1] node [black,below=4] {$Y_{4}$}; 
						\draw [black,fill] (3,-3) circle [radius=0.1] node [black,below=4] {$Y_{5}$};
						 \draw [black,fill] (4,-3) circle [radius=0.1] node [black,below=4] {$Y_{6}$};  
						
						\draw[black, thick] (-4.3,-4) -- (4.3,-4)node[black,right] {$T_{0,z'_1}\vec{X}$};  
						\foreach \x in {-4,...,4}
						\draw (-\x,0-4) circle (0.1);
						\draw [black,fill] (-2,0-4) circle [radius=0.1] node [black,below=4] {$X_{0}$}; 
						\draw [black,fill] (0,0-4) circle [radius=0.1] node [black,below=4] {$X_{1}$}; 
						\draw [black,fill] (1,0-4) circle [radius=0.1] node [black,below=4] {$X_{2}$}; 
						\draw [black,fill] (2,0-4) circle [radius=0.1] node [black,below=4] {$X_{3}$}; 
						\draw [black,fill] (3,0-4) circle [radius=0.1] node [black,below=4] {$X_{4}$}; 
						\end{tikzpicture}
					\end{center}
					\caption{Configurations before and after Jumps $z_1,z'_1$}
				\end{figure}
				
			\end{enumerate}

			\item For the inductive step, we assume \textcolor{black}{that $n \leq k-1$, we have $T_{i,z'_l}\vec{X} \geq   T_{i,z_l}\vec{Y}$ for $l =1,2,\dots, n$, and} we want to show  $\vec{X}'= T_{i,z'_{n+1}}\vec{X} \geq   T_{i,z_{n+1}}\vec{Y}=\vec{Y}'.$
			
			Let $I_1' = I_{i,\textcolor{black}{z_{n+1}}}(\vec{Y})$, and $I_2' =I_{i,\textcolor{black}{z'_{n+1}}}(\vec{X})$.  $I_2' \leq I_1'$ \textcolor{black}{since we can get $X_{I'_2}< X_{I'_1+1}$. Indeed, by definition of $z'_{n+1}, r, I'_1$, and $I'_2$, we have.\begin{equation}\label{eq:easy checking}
				{X_{I'_2}\leq X'_{I'_2}=X_i+ z'_{n+1} \leq Y_i+z_{n+1} = Y'_{I'_1}<Y'_{I'_1+1}=Y_{I'_1+1}\leq X_{I'_1+1}}\end{equation}} It suffices to show 
			\begin{equation} \label{eq: target} Y'_j = \left(T_{i,z_{n+1}}\vec{Y})\right)_j \leq \left(T_{i,z'_{n+1}}\vec{X})\right)_j =X'_j
			\end{equation}
			for $ I'_2 \leq j \leq I_1'$. For other $j$, we use the same inequalities as the first two in Step 2 (b) and get $Y_j' \leq X'_j$ \textcolor{black}{when $j<i$, $j>I'_1$, or $i\leq j < I'_2$.}
			
			\textcolor{black}{To get \eqref{eq: target}, we can consider a new configuration $K(\vec{X},s)$ by adding a particle to an empty site s, and relabeling all the particles to the left of the site s}. See Figure 8 for an example. 
			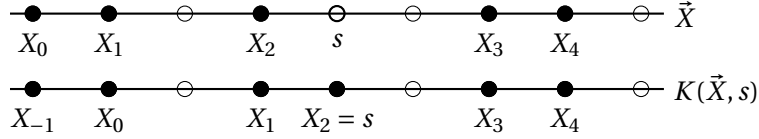
\begin{figure}[h!]
				\begin{center}
					\begin{tikzpicture} 
					\draw[black, thick] (-4.3,0) -- (4.3,0)node[black,right] {$\vec{X}$};  
					
					\foreach \x in {-4,...,4}
					\draw (-\x,0) circle (0.1);
					\draw [black,fill] (-4,0) circle [radius=0.1] node [black,below=4] {$X_{0}$}; 
					\draw [black,fill] (-3,0) circle [radius=0.1] node [black,below=4] {$X_{1}$}; 
					\draw [black,fill] (-1,0) circle [radius=0.1] node [black,below=4] {$X_{2}$}; 
					\draw [black,fill] (2,0) circle [radius=0.1] node [black,below=4] {$X_{3}$}; 
					\draw [black,fill] (3,0) circle [radius=0.1] node [black,below=4] {$X_{4}$}; 
					\draw [black,thick] (0,0) circle [radius=0.1] node [black,below=4] {$s$};
					
					\draw[black, thick] (-4.3,-1) -- (4.3,-1) node[black,right] {$K(\vec{X},s)$};  
					
					\foreach \x in {-4,...,4}
					\draw (-\x,0-1) circle (0.1);
					\draw [black,fill] (-4,0-1) circle [radius=0.1] node [black,below=4] {$X_{-1}$}; 
					\draw [black,fill] (-3,0-1) circle [radius=0.1] node [black,below=4] {$X_{0}$}; 
					\draw [black,fill] (-1,0-1) circle [radius=0.1] node [black,below=4] {$X_{1}$};
					\draw [black,fill] (2,0-1) circle [radius=0.1] node [black,below=4] {$X_{3}$}; 
					\draw [black,fill] (3,0-1) circle [radius=0.1] node [black,below=4] {$X_{4}$}; 
					\draw [black,fill] (0,0-1) circle [radius=0.1] node [black,below=4] {$X_{2} =s$}; 
					\end{tikzpicture}
				\end{center}
				\caption{Adding a Particle to a Vacant Site}
			\end{figure}
			
			Let $m = \max\{k:X_k \leq s\}$, and $s \not\in \vec{X}$,
			\begin{equation} \label{eq:K transform}
			(K(\vec{X},s))_j\textcolor{black}{:=} \begin{cases}
			X_j, &\text{ if } j>m,\\
			s, &\text{ if } j=m,\\
			X_{j+1}, &\text{ if } j<m
			\end{cases} 
			\end{equation}
			if $s\in \vec{X}$, we use convention $K(\vec{X},s) =\vec{X}$. 
			
			\begin{enumerate}
				\item \textcolor{black}{If $z'_1>0$, the  site $X_i$ in $T_{i,z'_1}\vec{X}$ is empty, and so is $Y_i$ in $T_{i,z_1}\vec{Y}$}. By \textcolor{black}{definition} \eqref{eq:K transform}, we have
				\begin{align*} K(T_{i,z'_1}\vec{X},X_i) \geq K(T_{i,\textcolor{black}{z_1}}\vec{Y},Y_i).\end{align*}
				 \textcolor{black}{$Y_i+z_{n+1}$ is the n-th hole from the right in the new configuration $K(T_{i,\textcolor{black}{z_1}}\vec{Y},Y_i)$ between site $Y_i$ and site $Y_i+R$, and the additional particle is the (i-1)-th at site $Y_i$.}  It is similar for \textcolor{black}{$K(T_{i,z'_{n+1}}\vec{X},X_i)$}.
				By the inductive hypothesis, we have
				\begin{equation} \label{eq:inductive hyp cons}
				T_{\textcolor{black}{i-1,z'_{n+1}}}K(T_{i,z'_1}\vec{X},X_i) \geq T_{\textcolor{black}{i-1,z_{n+1}}}K(T_{i,z_1}\vec{Y},Y_i). \end{equation}
				
				Let $I_1 = I_{i,z_1}\vec{Y}, I_2 = I_{i,z'_1}\vec{X}$. By an argument similar to \eqref{eq:easy checking}, $I_1\geq I_2$, $I_1\geq I'_1$, and $I_2\geq I'_2$. \textcolor{black}{For $I_2'\leq j\leq I_1'$, consider the j-th particle in the configurations $T_{i,z'_{n+1}}\vec{X}$ and $T_{i,z_{n+1}}\vec{Y}$,} 
				\begin{align*}
				Y'_j =  \left(T_{\textcolor{black}{i-1,z_{n+1}}}K(T_{i,z_1}\vec{Y},Y_i)\right)_{\textcolor{black}{j-1}},& \quad \text{if }I_2' \leq j \leq I_1',
				\\
				X'_j = \left(T_{\textcolor{black}{i-1,z'_{n+1}}}K(T_{i,z'_1}\vec{X},X_i)\right)_{\textcolor{black}{j-1}},&\quad\text{if } I_2' \leq j \textcolor{black}{\leq} I_2,\\
				X'_j \textcolor{black}{=} \left(T_{i,z'_{k+1}}K(T_{i,z'_1}\vec{X},X_i)\right)_j,&\quad \text{if } I_2 \textcolor{black}{< j \leq I_1'}.
				\end{align*} The last inequality may be vacuously true since $I_1' \leq I_2$ is possible. By the above three inequalities and \eqref{eq:inductive hyp cons}, we get \eqref{eq: target}. See Figure 9 for an example. In this case, $R=8$, $k=3$, $n=1$, $I_1=4$, $I_2=2$,$I'_1=3$, $I'_2=2$. 
				\begin{figure}[h!]
					\begin{center}
						\begin{tikzpicture} 
						\draw[black, thick] (-4.3,0) -- (4.3,0)node[black,right] {$\vec{Y}$};  
						
						\foreach \x in {-4,...,4}
						\draw (-\x,0) circle (0.1);
						\draw [black,fill] (-4,0) circle [radius=0.1] node [black,below=4] {$Y_{0}$}; 
						\draw [black,fill] (-3,0) circle [radius=0.1] node [black,below=4] {$Y_{1}$};
						\draw [black,fill] (-2,0) circle [radius=0.1] node [black,below=4] {$Y_{2}$};  
						\draw [black,fill] (0,0) circle [radius=0.1] node [black,below=4] {$Y_{3}$}; 
						\draw [black,fill] (2,0) circle [radius=0.1] node [black,below=4] {$Y_{4}$};
						\draw [black,fill] (4,0) circle [radius=0.1] node [black,below=4] {$Y_{5}$}; 
						
						\draw (3,0) circle (0.1)node [black,below=4] {$z_{1}$};
						\draw (1,0) circle (0.1)node [black,below=4] {$z_{2}$};
						\draw (-1,0) circle (0.1)node [black,below=4] {$z_{3}$};
						
						\draw[dashed,->] (-4,0.3) to (-4,0.8) to (1,0.8)
						to (1,0.3);

						\draw[black, thick] (-4.3,-1) -- (4.3,-1) node[black,right] {$\vec{X}$};  
						
						\foreach \x in {-4,...,4}
						\draw (-\x,0-1) circle (0.1);
						\draw [black,fill] (-3,0-1) circle [radius=0.1] node [black,below=4] {$X_{0}$}; 
						\draw [black,fill] (-2,0-1) circle [radius=0.1] node [black,below=4] {$X_{1}$}; 
						\draw [black,fill] (-1,0-1) circle [radius=0.1] node [black,below=4] {$X_{2}$}; 
						\draw [black,fill] (2,0-1) circle [radius=0.1] node [black,below=4] {$X_{3}$}; 
						\draw [black,fill] (3,0-1) circle [radius=0.1] node [black,below=4] {$X_{4}$}; 
						\draw [black,fill] (4,0-1) circle;
						\draw (1,0-1) circle (0.1)node [black,below=4] {$z'_{1}$};
						\draw (0,0-1) circle (0.1)node [black,below=4] {$z'_{2}$};
						\draw[dashed,->] (-3,-1.3-0.3) to (-3,-1.3-0.8) to (0,-1.3-0.8)
						to (0,-1.3-0.3);
						
						\draw[black, thick] (-4.3,-3) -- (4.3,-3)node[black,right] {$T_{0,z_2}\vec{Y}$};  
						
						\foreach \x in {-4,...,4}
						\draw (-\x,-3) circle (0.1);
						\draw [black,fill] (-3,-3) circle [radius=0.1] node [black,below=4] {$Y_{0}$}; 
						\draw [black,fill] (-2,-3) circle [radius=0.1] node [black,below=4] {$Y_{1}$}; 
						
						\draw [black,fill] (0,-3) circle [radius=0.1] node [black,below=4] {$Y_{2}$}; 
						\draw [black,fill] (1,-3) circle [radius=0.1] node [black,below=4] {$Y_{3}$}; 
						\draw [black,fill] (2,-3) circle [radius=0.1] node [black,below=4] {$Y_{4}$}; 
						\draw [black,fill] (4,-3) circle [radius=0.1] node [black,below=4] {$Y_{5}$};  
						
						\draw[black, thick] (-4.3,-4) -- (4.3,-4)node[black,right] {$T_{0,z'_2}\vec{X}$};  
						\foreach \x in {-4,...,4}
						\draw (-\x,0-4) circle (0.1);
						\draw [black,fill] (-2,0-4) circle [radius=0.1] node [black,below=4] {$X_{0}$}; 
						\draw [black,fill] (-1,0-4) circle [radius=0.1] node [black,below=4] {$X_{1}$}; 
						\draw [black,fill] (0,0-4) circle [radius=0.1] node [black,below=4] {$X_{2}$}; 
						\draw [black,fill] (2,0-4) circle [radius=0.1] node [black,below=4] {$X_{3}$}; 
						\draw [black,fill] (3,0-4) circle [radius=0.1] node [black,below=4] {$X_{4}$}; 
						
						\draw[black, thick] (-4.3,-5) -- (4.3,-5)node[black,right] {$T_{0,z_2}K(T_{0,z_1}\vec{Y},Y_0)$};  
						
						\foreach \x in {-4,...,4}
						\draw (-\x,-5) circle (0.1);
						\draw (-4,-5) circle [radius=0.1] node [black,below=4] {$s$};
						
						\draw [black,fill] (-3,-5) circle [radius=0.1] node [black,below=4] {$Y_{-1}$}; 
						\draw [black,fill] (-2,-5) circle [radius=0.1] node [black,below=4] {$Y_{0}$}; 
						
						\draw [black,fill] (0,-5) circle [radius=0.1] node [black,below=4] {$Y_{1}$}; 
						\draw [black,fill] (1,-5) circle [radius=0.1] node [black,below=4] {$Y_{2}$}; 
						\draw [black,fill] (2,-5) circle [radius=0.1] node [black,below=4] {$Y_{3}$}; 
						\draw [black,fill] (3,-5) circle [radius=0.1] node [black,below=4] {$Y_{4}$};  
						\draw [black,fill] (4,-5) circle [radius=0.1] node [black,below=4] {$Y_{5}$}; 
						
						\draw[black, thick] (-4.3,-6) -- (4.3,-6)node[black,right] {$T_{0,z'_2}K(T_{0,z'_1}\vec{X},X_0)$};  
						\foreach \x in {-4,...,4}
						\draw (-\x,0-6) circle (0.1);
						\draw (-3,0-6) circle [radius=0.1] node [black,below=4] {$s$};
						
						\draw [black,fill] (-2,0-6) circle [radius=0.1] node [black,below=4] {$X_{-1}$}; 
						\draw [black,fill] (-1,0-6) circle [radius=0.1] node [black,below=4] {$X_{0}$}; 
						\draw [black,fill] (0,0-6) circle [radius=0.1] node [black,below=4] {$X_{1}$}; 
						\draw [black,fill] (1,0-6) circle [radius=0.1] node [black,below=4] {$X_{2}$}; 
						\draw [black,fill] (2,0-6) circle [radius=0.1] node [black,below=4] {$X_{3}$};
						\draw [black,fill] (3,0-6) circle [radius=0.1] node [black,below=4] {$X_{4}$};	
						\end{tikzpicture}
					\end{center}
					\caption{Configurations before and after Jumps $z_2,z'_2$}
				\end{figure}
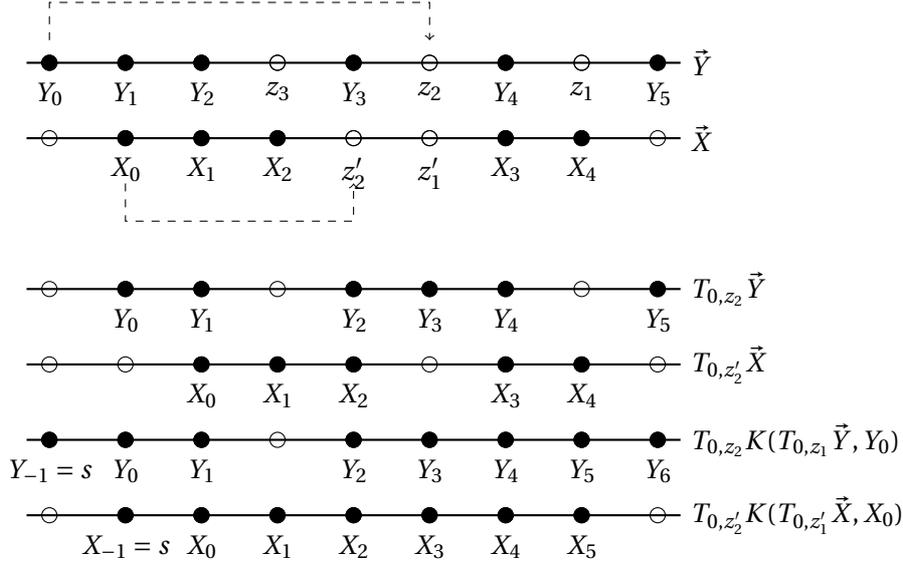
				\item if $z'_1=0$, it is easy to see $T_{i,z_1}\vec{Y}\geq T_{i,z_{\textcolor{black}{n+1}}}\vec{Y}$, and by Step 2, we have \[\vec{X'} =\vec{X}\geq T_{i,z_1}\vec{Y}\geq T_{i,z_{\textcolor{black}{n+1}}}\vec{Y}.\]
			\end{enumerate}
		\end{enumerate}
	\end{proof}
	
	Let \textcolor{black}{$\mathcal{C}_+$} be the class of jump rates $p(\cdot,\cdot)$ with the following properties:
	\begin{enumerate}[label=A*\arabic*]	
		\item (Positive) $p(x,y) \geq 0$, if $y>x$; otherwise, $p(x,y) = 0$,
		\item (\textcolor{black}{Finite-range}) there is an $R>0$, such that $p(x,y) = 0$, for all $x-y\leq R$, \label{asm:finite range for positive jumps}
		\item (\textcolor{black}{Radially Decreasing) for $x,y\neq 0$ and $x<y$}, $p(x,y)$ is increasing in $x$, and decreasing in $y$,\label{asm:attractive for positive jumps}
		\item \textcolor{black}{(Blockage at 0)  $p(x,y)=0$ if $x=0$ or $y=0$. \label{asm:blockage at site 0}}
	\end{enumerate}  Jump rates $p(\cdot,\cdot)$ from the class \textcolor{black}{$\mathcal{C}_+$} correspond to jumps along the positive direction. To get jumps towards both direction, \textcolor{black}{ we combine two jump rates to get} $p_c(x,y)=p_+(x,y) + p_-(y,x)$ where both $p_+, p_-$ are from the class \textcolor{black}{$\mathcal{C}_+$. We shall denote the colletcion of $p_c$ as $\mathcal{C}$.}
	
	\textcolor{black}{The main result in the following theorem is the first part, which says we can couple two AEPs with blockage $\vec{X_t}=(\vec{X}_0,\tilde{L},p,0) \succeq \vec{Y_t}=(\vec{Y}_0,\tilde{L},p_+,0)$ when they have the same jump rates $p_+$ from the class $\mathcal{C}_+$. With Remark \ref{rm:cov and reverse} and Lemma \ref{lm:combining two pairs}, we can replace $p_+$ from $\mathcal{C}_+$ by $p_c$ from $\mathcal{C}$. Lastly, we can use Lemmas \ref{lm:combining two pairs}, \ref{lm:jumps preserve ordering} to replace zero jump rates $q(\cdot)$ in either $\vec{X_t}$ or $\vec{Y_t}$ by a nonzero $q(\cdot)$.}
		
	\begin{theorem} \label{Thm: main step} Suppose jump rates $p_+(\cdot,\cdot)$, $p_-(\cdot,\cdot)$ are from the class \textcolor{black}{$\mathcal{C}_+$}.
		
		\begin{enumerate}
			\item There is a joint generator $\Omega_+$, such that for any  $\vec{X}_0 \geq \vec{Y}_0$, we can couple the pair of auxiliary processes $\vec{X_t}=(\vec{X}_0,\tilde{L},p_+,0) \succeq \vec{Y_t}=(\vec{Y}_0,\tilde{L},p_+,0)$  via $\Omega_+$.
			
			\item For combined jump rates $p_c(x,y) = p_+(x,y) + p_-(y,x)$, there is a joint generator $\Omega$, such that for any  $\vec{X}_0 \geq \vec{Y}_0$, we can couple the pair of auxiliary processes $\vec{X_t}=(\vec{X}_0,\tilde{L},p_c,0) \succeq \vec{Y_t}=(\vec{Y}_0,\tilde{L},p_c,0)$  via $\Omega$.
			
			\item (Theorem \ref{thm:coupling}) Let $q(\cdot):\mathbb{Z}\setminus\{0\}\to \mathbb{R}_{\geq 0}$, and $p_c(x,y) = p_+(x,y) + p_-(y,x)$. There are generators  $\Omega_R$, and $\Omega_L$, such that such that for any  $\vec{X}_0 \geq \vec{Y}_0$, we can couple $\vec{X_t}=(\vec{X}_0,\tilde{L}_R,p_c,q) \succeq \vec{Y_t}=(\vec{Y}_0,\textcolor{black}{\tilde{L}},p_c,0)$  via $\Omega_R$, and $\vec{W_t}=(\vec{X}_0,\tilde{L},p_c,0) \succeq \vec{Z_t}=(\vec{Y}_0,\textcolor{black}{\tilde{L}_L},p_c,q)$  via $\Omega_L.$
		\end{enumerate}
	\end{theorem}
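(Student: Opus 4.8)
The plan is to prove the three parts in sequence, deducing the later ones from the first, and to verify every coupling through the single criterion that the set $F_0=\{(\vec{X},\vec{Y}):\vec{X}\ge \vec{Y}\}$ is absorbing, i.e. $\Omega\,\mathbb{1}_{F_0}\ge 0$, exactly as in the proof of Lemma \ref{lm:combining two pairs}. All of the genuine work sits in Part 1, the coupling of two pure exchange processes $(\vec{X}_0,\tilde{L},p_+,0)\succeq(\vec{Y}_0,\tilde{L},p_+,0)$ with a common positive rate $p_+\in\mathcal{C}$; Parts 2 and 3 then follow by the lattice reversal of Remark \ref{rm:cov and reverse} and the additivity of Lemma \ref{lm:combining two pairs}.

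For Part 1 I would define $\Omega_+$ as a label-by-label coupling of the two exchange dynamics. First note the elementary monotonicity of positive exchange jumps: for $z>0$ one has $T_{i,z}\vec{X}\ge\vec{X}$, since the moved particle lands to the right of its old slot and the relabelled intermediate particles only shift up. Hence any jump performed by the larger process alone keeps the pair in $F_0$. The dangerous transitions are therefore the jumps of $\vec{Y}$, and these I would couple using Lemma \ref{lm: coupling preserve i-th}: whenever particle $i$ in $\vec{Y}$ jumps by $z>0$ (at rate $p_+(Y_i,Y_i+z)$) I move particle $i$ in $\vec{X}$ by the matched $z'=z'(\vec{X},\vec{Y},i,z)\ge 0$, so that $T_{i,z'}\vec{X}\ge T_{i,z}\vec{Y}$ and $F_0$ is preserved. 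To see that this defines a legitimate coupling I must reproduce both marginals, and this is exactly where \ref{asm:finite range for positive jumps} and \ref{asm:attractive for positive jumps} enter: the matching in Lemma \ref{lm: coupling preserve i-th} is a hole-to-hole matching, hence injective on $\{z'>0\}$ and satisfying $z'\le z$, so each $\vec{X}$-jump of size $z'>0$ is demanded by at most one $\vec{Y}$-jump; since $X_i\ge Y_i$ and $X_i+z'\le Y_i+z$, the attractiveness \ref{asm:attractive for positive jumps} (rate nondecreasing in the left endpoint and nonincreasing in the right endpoint) gives $p_+(X_i,X_i+z')\ge p_+(Y_i,Y_i+z)$, so the demanded rate never exceeds the available budget. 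The surplus $\vec{X}$-rate is then realized by independent $\vec{X}$-only jumps, harmless by $T_{i,z}\vec{X}\ge\vec{X}$, while $\vec{Y}$-jumps with $z'=0$ cost no $\vec{X}$-budget and already preserve the order. Assembling the coupled jumps, the $\vec{X}$-only jumps and the $z'=0$ jumps gives a generator $\Omega_+$ with the correct marginals and $\Omega_+\mathbb{1}_{F_0}\ge 0$.

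For Part 2 the combined rate is $p_c(x,y)=p_+(x,y)+p_-(y,x)$, whose second summand produces only negative jumps. Using Remark \ref{rm:cov and reverse}, the change of variables $\vec{Y}=R(\vec{X})$ turns these negative jumps into the positive jumps of the reversed process; since reversal flips the partial order, the two marginals exchange roles, and Part 1 applied in the reversed lattice yields an order-preserving coupling of the $p_-$-part. Adding this to $\Omega_+$ via Lemma \ref{lm:combining two pairs} produces the joint generator $\Omega$ coupling $(\vec{X}_0,\tilde{L},p_c,0)\succeq(\vec{Y}_0,\tilde{L},p_c,0)$. For Part 3 (which is Theorem \ref{thm:coupling}) note that $\tilde{L}_R$, $\tilde{L}_L$ and $\tilde{L}$ share the same exchange part $\tilde{L}^{ex}$, so the exchange coupling of Part 2 is reused verbatim, and the tagged-particle jumps are coupled separately by Lemma \ref{lm:jumps preserve ordering}, which supplies $\Omega_{0,R}$ coupling $(\vec{X}_0,\tilde{L}_R,0,q)\succeq(\vec{Y}_0,\tilde{L}_R,0,0)$ and the symmetric $\Omega_{0,L}$. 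Adding the exchange coupling to these via Lemma \ref{lm:combining two pairs}, and using that $(\vec{Y}_0,\tilde{L}_R,p_c,0)=(\vec{Y}_0,\tilde{L},p_c,0)$ because $q=0$ erases the distinction between $\tilde{L}_R$ and $\tilde{L}$, yields the two couplings $(\vec{X}_0,\tilde{L}_R,p_c,q)\succeq(\vec{Y}_0,\tilde{L},p_c,0)$ and $(\vec{X}_0,\tilde{L},p_c,0)\succeq(\vec{Y}_0,\tilde{L}_L,p_c,q)$ claimed in the theorem.

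The main obstacle is the bookkeeping in Part 1: turning the pointwise, one-jump-at-a-time statement of Lemma \ref{lm: coupling preserve i-th} into a single generator that simultaneously reproduces both marginal rates and keeps $F_0$ absorbing. The crux is the rate accounting above — that the $\vec{Y}$-driven demand on each $\vec{X}$-jump never overruns its rate — which is precisely where the injectivity of the hole-matching and the attractiveness hypothesis \ref{asm:attractive for positive jumps} are indispensable; once this is in place, Parts 2 and 3 are formal consequences of reversal and additivity.
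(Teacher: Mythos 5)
Your proposal is correct and follows essentially the same route as the paper's proof: Part 1 is the paper's explicit generator $\Omega_+$, with matched jumps $(T_{i,z'}\vec{X},T_{i,z}\vec{Y})$ supplied by Lemma \ref{lm: coupling preserve i-th}, the surplus rate $p_{i,s,0}$ realized as $\vec{X}$-only jumps, and order preservation checked through the absorbing set $F_0$; Parts 2 and 3 are obtained, as in the paper, by the reversal of Remark \ref{rm:cov and reverse} and the additivity of Lemma \ref{lm:combining two pairs}, taking $\Omega_R=\Omega_{0,R}+\Omega$ and $\Omega_L=\Omega_{0,L}+\Omega$. Your rate accounting (injectivity of the hole matching plus attractiveness giving $p_+(X_i,X_i+z')\geq p_+(Y_i,Y_i+z)$, hence a nonnegative surplus) makes explicit the nonnegativity of $p_{i,s,0}$ that the paper only implicitly invokes via \eqref{def:p_i,s,z} and \eqref{def:p_i,s,0}.
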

	\begin{proof}
		\begin{enumerate}
			\item By \textcolor{black}{Lemma \ref{lm: coupling preserve i-th}},
			for any $\vec{X} \geq \vec{Y}$ and $0<z\leq R$, we can find a $z' = C(\vec{X},\vec{Y},i,R,z)\geq 0$, such that
			\[T_{i,z'}\vec{X} \geq T_{i,z}\vec{Y}.\] \textcolor{black}{ (One choice for $C(\vec{X},\vec{Y},i,R,z)$ is the function constructed inductively in the proof of Lemma \ref{lm: coupling preserve i-th}.)} Therefore, we can assign the jump rates for the i-th particles by following functions: 
			\begin{align} 
			p_{i,s,z}(\vec{X},\vec{Y}) &:= \begin{cases}
			\mathbb{1}_{A_{i,z}}(\vec{Y}) \cdot p(Y_i,Y_i+z) \label{def:p_i,s,z} &, \text{ if } \textcolor{black}{s  =  C(\vec{X},\vec{Y},i,R,z), \text{ and }\vec{X}\geq \vec{Y}},  \\
			0 &, \text{ else },
			\end{cases}\\
			p_{i,s,0}(\vec{X},\vec{Y}) &:=\begin{cases} \mathbb{1}_{A_{i,s}}(\vec{X})\left(p(X_i,X_i+s) - 
			\sum_{0<z\leq R}p_{i,s,z}(\vec{X},\vec{Y})\right) &,\text{ if } \textcolor{black}{s  >0}\\
			0 &,\text{ if } s =0 . \label{def:p_i,s,0}
			\end{cases}
			\end{align}
			\textcolor{black}{Particularly, by \eqref{def:p_i,s,z}, at most one term in the sum of \eqref{def:p_i,s,0} is positive with value $p(Y_i,Y_i+z)$ for some $z\leq R$. By Lemma \ref{lm: coupling preserve i-th}, we get $X_i\geq Y_i, X_i+s\leq Y_i+z$, which implies $p_{i,s,0} \geq 0$ by Assumption \ref{asm:attractive for positive jumps}.}  
			
			\textcolor{black}{The we define the generator $\Omega_+$ by its action on a local function $F$ by:} 
			if $\vec{X}\geq \vec{Y}$,
			\begin{align} 
			\Omega_+F(\vec{X},\vec{Y}) =& \sum_{i, 0<z\leq R,0\leq s\leq R} p_{i,s,z}(\vec{X},\vec{Y})\left[F(T_{i,s}\vec{X},T_{i,z}\vec{Y}) -F(\vec{X},\vec{Y}) \right] \label{simultaneuous jump}\\
			+& \sum_{i,0<s\leq R} p_{i,s,0}(\vec{X},\vec{Y})\left[F(T_{i,s}\vec{X},\vec{Y}) -F(\vec{X},\vec{Y}) \right],
			\label{single X jump}
			\end{align}
			\textcolor{black}{if $\vec{X}\ngeq \vec{Y},$
			\begin{align} 
			\Omega_+F(\vec{X},\vec{Y}) =& \sum_{i, 0<z\leq R} \mathbb{1}_{A_{i,z}}p(Y_i,Y_i+z)(\vec{Y})\left[F(\vec{X},T_{i,z}\vec{Y}) -F(\vec{X},\vec{Y}) \right] \label{single Y jump2}\\
			+& \sum_{i,0<s\leq R}\mathbb{1}_{A_{i,s}}(\vec{X})p(X_i,X_i+z) \left[F(T_{i,s}\vec{X},\vec{Y}) -F(\vec{X},\vec{Y}) \right]
			\label{single X jump2}
			\end{align}}
			\textcolor{black}{\eqref{simultaneuous jump}} corresponds to the case in \textcolor{black}{Lemma \ref{lm: coupling preserve i-th}} when both of the i-th particles in $\vec{X}$ and $\vec{Y}$ jump, while \eqref{single X jump} corresponds to the case where only the i-th particle in $\vec{X}$ jumps; \eqref{single Y jump2},\eqref{single X jump2} correspond to the case where particles in $\vec{X},\vec{Y}$ jump independently. The rest is to check $\Omega_+$ satisfies Definition \ref{def:coupling}. This is standard:
			
			The initial configuration can always be chosen with $\vec{W} \geq \vec{V} $ almost surely and  $\vec{W}\overset{d}{=} \vec{X}_0$, $\vec{V}\overset{d}{=} \vec{Y}_0$. (See Theorem B9\cite{Li})
			
			To show $\vec{W}_t \geq \vec{V}_t$ almost surely, use the same arguments in the proof of Lemma \ref{lm:combining two pairs}. We want to show the closed set $F_0=\{(\vec{X},\vec{Y}):\vec{X} \geq \vec{Y}\}$ is an absorbing set by checking \textcolor{black}{$\Omega_+\mathbb{1}_{F_0}(\vec{X},\vec{Y})\geq 0:$}
			\begin{enumerate}
				\item for $\vec{X}\geq \vec{Y}$, by Lemma \ref{lm: coupling preserve i-th} and $p_{i,s,z} \geq 0$
				\begin{align*}
				\Omega_+\mathbb{1}_{F_0}(\vec{X},\vec{Y}) =& \sum_{\substack{i\in \mathbb{Z}, 0<z\leq R,\\0\leq s\leq R}} p_{i,s,z}(\vec{X},\vec{Y})\left[\mathbb{1}_{F_0}(T_{i,s}\vec{X},T_{i,z}\vec{Y}) -\mathbb{1}_{F_0}(\vec{X},\vec{Y}) \right] \\
				&+ \sum_{i\in \mathbb{Z},0<s\leq R} p_{i,s,0}(\vec{X},\vec{Y})\left[\mathbb{1}_{F_0}(T_{i,s}\vec{X},\vec{Y}) -\mathbb{1}_{F_0}(\vec{X},\vec{Y}) \right]\\
				=& 0.
				\end{align*}
				
				\item for $\vec{X} \ngeq \vec{Y}$, it's obvious that $\Omega_+\mathbb{1}_{F_0}(\vec{X},\vec{Y})\geq 0$ since each term is nonnegative. We only need to show the sum is finite. Notice that only finitely many terms in \eqref{single X jump2} are positive. Since $T_{i,s}$ changes finitely many $X_i$, if one term $T_{i,s}\vec{X} \geq \vec{Y}$ holds while $\vec{X}\ngeq\vec{Y}$,  $T_{i',s'}\vec{X}\geq \vec{Y}$ holds for finitely many pairs $i',s'$. Similarly, only finitely many terms in \eqref{single Y jump2} are positive. Therefore, 	$\Omega_+\mathbb{1}_{F_0}(\vec{X},\vec{Y})\geq 0.$ 
			\end{enumerate}
			To show the marginal conditions,
			\textcolor{black}{we will check for $F_2(\vec{X},\vec{Y})=H_2(\vec{Y})$, and the other follows directly from $T_{i,0}\vec{X} = \vec{X}$, \eqref{def:p_i,s,z} and \eqref{def:p_i,s,0}. On $F_0^c = \{(\vec{X},\vec{Y}):\vec{X}\ngeq\vec{Y})\}$, clearly ${\Omega_+H_2(\vec{X},\vec{Y}) = \tilde{L}H_2(\vec{Y})}$. We only need for every $\vec{X}\geq \vec{Y}$,}
			\begin{align*}
			\Omega_+F_2(\vec{X},\vec{Y}) =& \sum_{i \in \mathbb{Z}, 0<z\leq R,0\leq s\leq R} p_{i,s,z}(\vec{X},\vec{Y})\left[H_2(T_{i,z}\vec{Y}) -H_2(\vec{Y}) \right] \\
							&+ \sum_{i \in \mathbb{Z},0<s\leq R} p_{i,s,0}(\vec{X},\vec{Y})\left[H_2(\vec{Y}) -H_2(\vec{Y}) \right]  \\
			=& \sum_{i  \in \mathbb{Z}, 0< z\leq R,0\leq s\leq R} p_{i,s,z}(\vec{X},\vec{Y})\left[H_2(T_{i,z}\vec{Y})-H_2(\vec{Y}) \right] \\
			=& \sum_{i \in \mathbb{Z}, 0< z\leq R}\left(\sum_{0\leq s\leq R} \mathbb{1}_{\{s=C(\vec{X},\vec{Y},i,R,z)\}} \right) \cdot \mathbb{1}_{A_{i,z}}(\vec{Y}) p(Y_i,Y_i+z)\left[H_2(T_{i,z}\vec{Y})-H_2(\vec{Y}) \right] \\
			=& \sum_{i \in \mathbb{Z}, 0< z\leq R} \mathbb{1}_{A_{i,z}}(\vec{Y}) p(Y_i,Y_i+z)\left[H_2(T_{i,z}\vec{Y})-H_2(\vec{Y})\right] = \tilde{L}H_2(\vec{Y}).
			\end{align*} The fourth equality is due to Lemma \ref{lm: coupling preserve i-th}, which implies exacly one $s$ satisfies $s = C(\vec{X},\vec{Y},i,R,z)$.
%

			\item 	The second part is an application of Lemma \ref{lm:combining two pairs}, the change of variable argument in Remark \ref{rm:cov and reverse} and the first part.
			
			Let \textcolor{black}{$\vec{X}_{-,t} =(R(\vec{X}_{0}),\tilde{L},\tilde{p}_-,0) $, where $\tilde{p}_-(x,y)=p_-(y,x)$}. Then, $R(\vec{X}_{-,t})= (\vec{X}_{0},\tilde{L},p_-,0)$. As $R(\cdot)$ is a map reversing ordering,
			\[\vec{X}\geq \vec{Y} \Leftrightarrow R(\vec{X}) \leq R(\vec{Y}).\]
			\textcolor{black}{By the first part of Theorem \ref{Thm: main step}, we can couple  $\vec{X}_{-,t} =(R(\vec{X}_{0}),\tilde{L},\tilde{p}_-,0)   \preceq (R(\vec{Y}_{0}),\tilde{L},\tilde{p}_-,0) =\vec{Y}_{-,t}$ for any $\vec{X}_0 \geq \vec{Y}_0$ via a generator. 
			Therefore, there is a generator $\Omega_-$, via which we can couple $R(\vec{X}_{-,t})=(\vec{X}_{0},\tilde{L},p_-,0)\succeq R(\vec{Y}_{-,t}) =(\vec{Y}_{0},\tilde{L},p_-,0)$ for any $\vec{X}_0 \geq \vec{Y}_0$.} Then by Lemma \ref{lm:combining two pairs}, we get the joint generator $\Omega=\Omega_++\Omega_-$.
			
			\item This is a consequence of the second part, Lemma \ref{lm:combining two pairs} and Lemma \ref{lm:jumps preserve ordering}. Take $\Omega_R = \Omega_{0,R} + \Omega$, and $\Omega_L = \Omega_{0,L} + \Omega$. \textcolor{black}{We will show the first case, and the other is similar:}
			
			\textcolor{black}{
			By the second part of Theorem \ref{Thm: main step}, we have a generator $\Omega$, via which we can couple auxiliary processes 
			\[\vec{X}_t= (\vec{X}_0,\tilde{L},p_c,0) \succeq (\vec{Y}_0,\tilde{L},p_c,0)=\vec{Y}_t, \] for any $\vec{X}_0\geq \vec{Y}_0$. By Lemma \ref{lm:jumps preserve ordering}, we can also find a generator $\Omega_{0,R}$ to couple auxiliary processes
			\[ \vec{W}_t= (\vec{X}_0,\tilde{L}_R,0,q) \succeq (\vec{Y}_0,\tilde{L},0,0)=\vec{Z}_t,
			\] for any $\vec{X}_0\geq \vec{Y}_0$. Notice that $\vec{X}_t$ is also $(\vec{X}_0,\tilde{L}_R,p_c,0)$. By Lemma \ref{lm:combining two pairs}, we can use generator $\Omega_R = \Omega_{0,R} + \Omega$ to couple
			\[\vec{U}_t= (\vec{X}_0,\tilde{L}_R,p_c,q) \succeq (\vec{Y}_0,\tilde{L}_R,p_c,0) =\vec{V}_t\] for any $\vec{X}_0 \geq \vec{Y}_0$.} 			
		\end{enumerate}
	\end{proof}
 
 In the proof of the first part of Theorem \ref{Thm: main step}, we see $p_{i,s,z}$ and $p_{i,s,0}$ defined by \eqref{def:p_i,s,z} and \eqref{def:p_i,s,0} are important in constructing the joint generator $\Omega_+$ defined by \eqref{single X jump}-\eqref{single X jump2}. They require Lemma \ref{lm:jumps preserve ordering} and assumption \ref{asm:attractive for positive jumps}. The Lemma \ref{lm:jumps preserve ordering} depends on the finite range $R$ and $\vec{X}\geq \vec{Y}$, while the latter is an assumption on the jump rates. We can easily modify $p_{i,s,z}$,  $p_{i,s,0}$ and $\Omega_+$ to couple two modified auxiliary processes defined in Section \ref{sec:Lower bounds for a fast tagged particle}. 
	\begin{corollary}\label{cor: coupling of maprocess} Let $p(\cdot)$ satisfy assumption \ref{Asmp:attractiveness2}, \ref{Asmp:finite-range2}, and $q(\cdot)$ be of range $R$ with an extra condition
		\begin{equation}\begin{cases} \label{eq:q,p condition}
		q(k)\geq p(k), \text{ if }k>0, \\
		q(k)\leq p(k), \text{ if }k<0,
		\end{cases} 
		\end{equation} Then we can find a joint generator $\tilde{\Omega}$ to couple modified auxiliary processes $(\vec{X}_t,I_t)=(\vec{X}_0,p,q,I_t)$ and $(\vec{Y}_t,i_t)=(\vec{Y}_0,p,p,i_t)$ for any initial condition $\vec{X}_0 \geq \vec{Y_0}$, in the sense
		\begin{align*}
			\vec{X}_t \geq \vec{Y}_t,& \text{ for all } t\geq 0, \\
		\tilde{\Omega} F_1\left(\vec{X},I,\vec{Y},i\right) =& \hat{L}_{p,q} H_1\left(\vec{X},I\right),  \\
		\tilde{\Omega} F_2\left(\vec{X},I,\vec{Y},i\right) =& \hat{L}_{p,p} H_2\left(\vec{Y},i\right), 
		\end{align*} 
			for any local functions $F_1\left(\vec{X},I,\vec{Y},i\right)=H_1\left(\vec{X},I\right)$ and $F_2\left(\vec{X},I,\vec{Y},i\right)=H_2\left(\vec{Y},i\right)$.
	\end{corollary}    
	\begin{proof}
		 We will give the joint generator $\tilde{\Omega} =\tilde{\Omega}_+ + \tilde{\Omega}_-$ by writing out $\tilde{\Omega}_+$ and $\tilde{\Omega}_-$, which will have the same form in terms of $p_{j,s,z}$. The rest is to check conditions, which follows almost the same arguments as those in the first part of Theorem \ref{Thm: main step}, and we will omit it. 
		 
		 We first define the modified $\tilde{\Omega}_+$ by modifying $p_{j,s,z}$, $p_{j,s,0}$ from \eqref{def:p_i,s,z} and \eqref{def:p_i,s,0}: 
		 
		 For $R\geq z> 0$, $R\geq s \geq 0$,
		 \begin{align} 
		 		\tilde{p}_{j,s,z}\left(\vec{X},I,\vec{Y},i\right) &:= \begin{cases}
		 		\mathbb{1}_{A_{j,z}}(\vec{Y}) \cdot p(z) \label{def:p_i,s,z mod} &, \text{ if } s  =  C(\vec{X},\vec{Y},j,R,z), \text{ and }\vec{X}\geq \vec{Y}  \\
		 		0 &, \text{ else }
		 		\end{cases}\\
		 		\tilde{p}_{j,s,0}\left(\vec{X},I,\vec{Y},i\right) &:= \begin{cases}
		 		\mathbb{1}_{A_{j,s}}(\vec{X})\left(p(s) - 
		 		\sum_{0<z\leq R}\tilde{p}_{j,s,z}\left(\vec{X},I,\vec{Y},i\right)\right) &,\text{ if } j\neq I \\
		 		\mathbb{1}_{A_{j,s}}(\vec{X})\left(q(s) - 
		 		\sum_{0<z\leq R}\tilde{p}_{I,s,z}\left(\vec{X},I,\vec{Y},i\right)\right)&,\text{ if } j= I
		 		\end{cases} \label{def:p_i,s,0 mod}	
		 \end{align} where $C(\vec{X},\vec{Y},j,R,z)$ is the function constructed in Lemma \ref{lm: coupling preserve i-th}. If we replace $q$ by $p$, \eqref{def:p_i,s,0 mod} is the same as \eqref{def:p_i,s,0}. Therefore, it is nonnegative by condition \eqref{eq:q,p condition}.
		 Then the generator $\tilde{\Omega}_+$ acts on $F$ is given by:  if $\vec{X}\geq \vec{Y}$,
		 \begin{align*} 
		 \tilde{\Omega}_+F\left(\vec{X},I,\vec{Y},i\right) =& \sum_{\substack{j \in \mathbb{Z}, 0\leq z\leq R, \\0\leq s\leq R}} \tilde{p}_{j,s,z}\left(\vec{X},I,\vec{Y},i\right)\left[F\left(T_{j,s}\vec{X},\hat{I}_{j,s}\left(\vec{X},I\right),T_{j,z}\vec{Y},\hat{I}_{j,z}\left(\vec{Y},i\right)\right) -F\left(\vec{X},I,\vec{Y},i\right) \right],
		 \end{align*}
		 and if $\vec{X}\ngeq \vec{Y},$
		 	\begin{align*} 
		 	\tilde{\Omega}_+F\left(\vec{X},I,\vec{Y},i\right) =& \sum_{j \in \mathbb{Z}, 0<z\leq R} \mathbb{1}_{A_{j,z}}(\vec{Y})\cdot p(z)\left[F\left(\vec{X},I,T_{i,z}\vec{Y},\hat{I}_{j,z}\left(\vec{Y},i\right)\right) -F\left(\vec{X},I,\vec{Y},i\right) \right] \\
		 	+& \sum_{j\neq I ,0<s\leq R}\mathbb{1}_{A_{j,s}}(\vec{X})\cdot p(s) \left[F\left(T_{i,s}\vec{X},\hat{I}_{j,s}(\vec{X},I),\vec{Y},i\right) -F\left(\vec{X},I,\vec{Y},i\right) \right]\\
		 	+& \sum_{0<s\leq R}\mathbb{1}_{A_{I,s}}(\vec{X})\cdot q(s) \left[F\left(T_{i,s}\vec{X},I_{I,s}(\vec{X}),\vec{Y},i\right) -F\left(\vec{X},I,\vec{Y},i\right) \right]
		 	\end{align*}
		 	where $\hat{I}_{j,z}\left(\vec{X},I\right)$ is defined by \eqref{eq: modified index change}, which is the same as $I_{I,z}(\vec{X})$ when $j=I$. On the other hand, we can also define $\tilde{\Omega}_-$ in a similar way:
		 	
		 	For $R\geq -s> 0$, $R\geq -z \geq 0$,
		 	\begin{align} 
		 	\tilde{p}_{j,s,z}\left(\vec{X},I,\vec{Y},i\right) &:= \begin{cases}
		 	\mathbb{1}_{A_{i,z}}(\vec{Y}) \cdot q(s)  &, \text{ if } j=i, z  =  -C(R(\vec{Y}),R(\vec{X}),-i,R,-s), \text{ and }\vec{X}\geq \vec{Y}  \\
		 	\mathbb{1}_{A_{j,z}}(\vec{Y}) \cdot p(s)  &, \text{ if } j\neq i, z  =  -C(R(\vec{Y}),R(\vec{X}),-j,R,-s), \text{ and }\vec{X}\geq \vec{Y}  \\
		 	0 &, \text{ else }
		 	\end{cases}\label{def:p_i,s,z mod-} \\
		 	\tilde{p}_{j,0,z}\left(\vec{X},I,\vec{Y},i\right) &:=
		 	\mathbb{1}_{A_{j,z}}(\vec{X})\left(p(z) - 
		 	\sum_{0<-s\leq R}\tilde{p}_{j,s,z}\left(\vec{X},I,\vec{Y},i\right)\right)  \label{def:p_i,s,0 mod-}
		 	\end{align}	where $R(\vec{X})$ is defined in Remark \ref{rm:cov and reverse}. Also, by replacing $q$ by $p$ in \eqref{def:p_i,s,z mod-} and using condition \eqref{eq:q,p condition}, we see both \eqref{def:p_i,s,z mod-} and \eqref{def:p_i,s,0 mod-} are nonnegative. The generator $\tilde{\Omega}_-$ acts on $F$ is given by:  if $\vec{X}\geq \vec{Y}$,
		 	\begin{align*} 
		 	\tilde{\Omega}_-F\left(\vec{X},I,\vec{Y},i\right) =& \sum_{\substack{j, 0\leq -s\leq R, \\0\leq -z\leq R}} \tilde{p}_{j,s,z}\left(\vec{X},I,\vec{Y},i\right)\left[F\left(T_{j,s}\vec{X},\hat{I}_{j,s}\left(\vec{X},I\right),T_{j,z}\vec{Y},\hat{I}_{j,z}\left(\vec{Y},i\right)\right) -F\left(\vec{X},I,\vec{Y},i\right) \right],
		 	\end{align*}
		 	and if $\vec{X}\ngeq \vec{Y},$
		 	\begin{align*} 
		 	 	\tilde{\Omega}_-F\left(\vec{X},I,\vec{Y},i\right) =& \sum_{j \in \mathbb{Z}, 0<-z\leq R} \mathbb{1}_{A_{j,z}}(\vec{Y})\cdot p(z)\left[F\left(\vec{X},I,T_{i,z}\vec{Y},\hat{I}_{j,z}\left(\vec{Y},i\right)\right) -F\left(\vec{X},I,\vec{Y},i\right) \right] \\
		 	 	+& \sum_{j\neq I ,0<-s\leq R}\mathbb{1}_{A_{j,s}}(\vec{X})\cdot p(s) \left[F\left(T_{i,s}\vec{X},\hat{I}_{j,s}(\vec{X},I),\vec{Y},i\right) -F\left(\vec{X},I,\vec{Y},i\right) \right]\\
		 	 	+&  \sum_{0<-s\leq R}\mathbb{1}_{A_{I,s}}(\vec{X})\cdot q(s) \left[F\left(T_{I,s}\vec{X},I_{I,s}(\vec{X}),\vec{Y},i\right) -F\left(\vec{X},I,\vec{Y},i\right) \right]
		 	 	\end{align*}
		 	 	We can see both $\tilde{\Omega}_+$ and $\tilde{\Omega}_-$ have the same form in terms of $\tilde{p}_{j,s,z}$. Then, we obtain $\tilde{\Omega}$ by $\tilde{\Omega} = \tilde{\Omega}_+ + \tilde{\Omega}_-$.
\end{proof}

	\end{document}